\newtheorem*{theorem*}{Theorem}
\newtheorem{theorem}{Theorem}[section]
\newtheorem{corollary}{Corollary}[section]
\newtheorem{lemma}{Lemma}[section]
\newtheorem{conjecture}{Conjecture}[section]
\newtheorem{problem}{Problem}[section]
\crefname{theorem}{Theorem}{theorem}
\crefname{lemma}{Lemma}{lemma}
\crefname{remark}{Remark}{remark}
\crefname{corollary}{Corollary}{corollary}
\crefname{proposition}{Proposition}{proposition}
\crefname{example}{Example}{example}
\crefname{definition}{Definition}{definition}
\crefname{notation}{Notation}{notation}
\crefname{appendix}{Appendix}{appendix}
\crefname{section}{Section}{section}
\crefname{question}{Question}{question}
\begin{document}
\title{On arithmetic properties of Cantor sets}

\author{Lu Cui}
\address{Institute of Mathematics, Academy of Mathematics and Systems Science, Chinese Academy of Sciences, Beijing 100190, China}
\email{cuilu17@mails.ucas.ac.cn}
\thanks{}

\author{Minghui MA}
\address{Institute of Mathematics, Academy of Mathematics and Systems Science, Chinese Academy of Sciences, Beijing 100190, China}
\email{maminghui17@mails.ucas.ac.cn}
\thanks{}

\keywords{Cantor Ternary Set, Cantor Dust, $p$-adic Cantor Set, Waring's Problem}

\begin{abstract}
    Three types of Cantor sets are studied.
For any integer $m\ge 4$, we show that every real number in $[0,k]$ is the sum of at most $k$ $m$-th powers of elements in the Cantor ternary set $C$ for some positive integer $k$, and the smallest such $k$ is $2^m$.
Moreover, we generalize this result to middle-$\frac 1\alpha$ Cantor set for  $1<\alpha<2+\sqrt{5}$ and $m$ sufficiently large.
For the naturally embedded image $W$ of the Cantor dust $C\times C$ into the complex plane $\mathbb{C}$, we prove that for any integer $m\ge 3$, every element in the closed unit disk in $\mathbb C$ can be written as the sum of at most $2^{m+8}$ $m$-th powers of elements in $W$.
At last, some similar results on $p$-adic Cantor sets are also obtained.
\end{abstract}

\maketitle

\section{Introduction}

The classical Waring's problem in number theory asks whether for any integer $m\ge 3$, there is an associated positive integer $k$ such that every natural number can be written as the sum of at most $k$ $m$-th powers of natural numbers.
This problem and its variations have received much attention (see, for example, \cite{Zhao2,Liu1,Liu2,Vaughan,Wu,Zhao1}).

Recently, Guo studied analogs of Waring's problem on Cantor sets in \cite{Guo}, which was called the Waring-Hilbert problem.
Let
\[
C=\left\{\sum_{n=1}^\infty\frac{a_n}{3^n}: a_n\in\{0,2\}\right\}\subseteq[0,1]
\]
be the Cantor ternary set.
In 1917, Steinhaus proved in \cite{Steinhaus} that
\begin{equation*}
  C+C=\{x+y: x,y\in C\}=[0,2].
\end{equation*}
In 2019, Athreya, Reznick and Tyson studied arithmetic of Cantor set (see \cite{Athreya}) and conjectured that every element in $[0,1]$ can be written as $x_1^2+x_2^2+x_3^2+x_4^2$ with each $x_j$ in $C$.
This conjecture was proved by Wang, Jiang, Li and Zhao in \cite{Wang}.
In fact, they considered general middle-$\frac 1\alpha$ Cantor sets $C_\alpha$ obtained by removing (successively) the middle one open interval with length $\frac{1}{\alpha}$ of the original for $\alpha>1$ and showed that
\[
[0,4]=\{x_1^2+x_2^2+x_3^2+x_4^2:x_j\in C_\alpha, 1\le j\le 4\}
\]
if and only if $\alpha\ge 3$.

After that, Guo proved that, for any integer $m\ge 1$, every real number in $[0,1]$ can be written as $x_1^m+x_2^m+\cdots+x_k^m$ with each $x_j$ in $C$ for some $k\le 6^m$ (see \cite{Guo}).
He conjectured that
\begin{equation}\label{Guo1}
  [0,2^m]=\{x_1^m+x_2^m+\cdots+x_{2^m}^m:x_j\in C,1\le j\le 2^m\}
\end{equation}
for each integer $m\ge 4$.
He proved (\ref{Guo1}) for $m=3$.
For the middle-$\frac{1}{2}$ Cantor set $C_2$, he conjectured that
\begin{equation}\label{Guo2}
  [0,3^m]=\{x_1^m+x_2^m+\cdots+x_{3^m}^m:x_j\in C_2,1\le j\le 3^m\}
\end{equation}
for each integer $m\ge 1$.

The main purpose of this article is to solve the problems proposed by Guo.
In Section 2, we introduce some notation and prove some auxiliary results.
For basics of number theory, set theory, and real analysis, see \cite{Hua,Pan,Rudin,Ye}.

In Section 3, denote by $G_\alpha(m)$ the smallest positive integer $k$ that satisfies
\begin{equation*}
[0,k]=\{x_1^m+x_2^m+\cdots+x_k^m:x_j\in C_\alpha,1\le j\le k \}.
\end{equation*}
We provide an upper bound of $G_\alpha(m)$ for any real number $m\ge 1$ and $\alpha>1$ (see Theorem \ref{lemupperbound}).
Let $r=\frac{1}{2}(1-\frac{1}{\alpha})$.
Note that if $k<(\frac{1}{r}-1)^m$, then elements in $(kr^m,(1-r)^m)$ can not be written as $x_1^m+x_2^m+\cdots+x_k^m$ with each $x_j$ in $C_\alpha$ (see Lemma \ref{thmlowerbound}).
We shall prove that, for any $\alpha$ in $(1,2+\sqrt{5})$, if $m$ is sufficiently large, then $G_\alpha(m)=\left\lceil\left(\frac{1}{r}-1\right)^m\right\rceil$ (see Theorem \ref{upperbound}).

In Section 4, by using the result obtained in Section 3, we show that (\ref{Guo1}) holds for each integer $m\ge4$ (see Theorem 4.1) and (\ref{Guo2}) holds for each integer $m\ge7$ (see Theorem 4.2).

In Section 5, we study another two questions raised by Guo.
In \cite[Section 3]{Guo}, Guo asked whether for any $m\ge 1$, there is an $\varepsilon>0$ such that
\begin{equation}\label{Guo3}
  [2-\varepsilon,2]\subseteq\{x_1^m+x_2^m:x_1,x_2\in C\}.
\end{equation}
He speculated that, for any $m\ge 1$, there exists an $\epsilon>0$, such that
\begin{equation}\label{Guo4}
  [3-\epsilon,3]\subseteq\{x_1^m+x_2^m+x_3^m:x_1,x_2,x_3\in C\}.
\end{equation}
We prove question (\ref{Guo4}) (see Theorem 5.2) and give a negative answer to question (\ref{Guo3}) (see Theorem 5.1).

In Section 6, we prove that for each integer $m\ge 3$, there is a positive integer $k\le 2^{m+8}$, such that
\begin{equation*}
  \{z: z\in\mathbb{C}, |z|\le 1\}\subseteq\{z_1^m+ \cdots + z_k^m: z_j\in W,1\le j\le k\},
\end{equation*}
where $W=\{x+iy: x,y\in C\}\cong C\times C$ (see Theorem 6.2).
This is another conjecture of Guo (see \cite[Section 6]{Guo}).

In Section 7, we introduce a class of Cantor sets in the rings of $p$-adic integers $\mathbb Z_p\subseteq \mathbb Q_p$ for every prime $p$, and study generalizations of Waring's problem on these Cantor sets.

\section{Notation and preliminaries}

Let $\mathbb{Q}$, $\mathbb{Z}$, $\mathbb{N}$, $\mathbb{R}$ and $\mathbb{R}_+$ denote the set of rational numbers, integers, non-negative integers, real numbers and positive real numbers, respectively.
Let $n,N,$ and $k$ be integers throughout this article.
For non-empty sets $X$ and $Y$ in the complex plane $\mathbb{C}$ and $t\in \mathbb{C}$, we adopt the notation
\begin{equation*}
  X+Y=\{x+y: x\in X,y\in Y\},
\end{equation*}
\begin{equation*}
  tX=\{tx: x\in X\},\  t+X=\{t+x: x\in X\},
\end{equation*}
\begin{equation*}
X^{k}=\underbrace{X\times X\times \cdots \times X}_k=\big\{(x_{1},x_{2},\cdots,x_k):x_{1},x_{2},\cdots,x_k\in X\big\}.
\end{equation*}
Let $\alpha>1$ be a real number and $r=\frac{1}{2}(1-\frac{1}{\alpha})\in(0,\frac{1}{2})$.
Two contractive maps $g_0$ and $g_1$ are defined on $[0,1]$ by
\begin{equation*}
g_0(x)=rx\ \ \text{and}\ \  g_1(x)=1-r+rx.
\end{equation*}
For $n\geq 1$, define
\begin{equation*}
\mathcal{F}_{n}=\big\{g_{\sigma}([0,1]):\sigma\in\{0,1\}^{n}\big\}\ \ \text{and}\ \ F_{n}=\bigcup\limits_{A\in\mathcal{F}_{n}}A,
\end{equation*}
where $g_{\sigma}(x)=g_{\sigma_{1}}\circ g_{\sigma_{2}}\circ\cdots\circ g_{\sigma_{n}}(x)$ for $\sigma=\sigma_{1}\sigma_{2}\cdots\sigma_{n}\in\{0,1\}^{n}$.
Then every $F_{n}$ is a union of finite closed intervals and $F_{n}\supseteq F_{n+1}$.
Moreover, the middle-$\frac{1}{\alpha}$ Cantor set is $C_\alpha=\bigcap\limits_{n=1}^{\infty}F_{n}$.
The Cantor ternary set $C$ is $C_3$.

Let $L_{n}=\big\{g_{\sigma}(0):\sigma\in\{0,1\}^{n}\big\}$ be the set of all left endpoints of intervals in $\mathcal{F}_{n}$.
For any $u\in L_{n}$, we write
\begin{equation*}
I_{u}=[u,u+r^{n}]\in\mathcal{F}_{n}.
\end{equation*}
Note that $I_{u}$ contains two intervals $I_{u,0}$ and $I_{u,1}$ in $\mathcal{F}_{n+1}$.
We write
\begin{equation*}
I_{u,0}=[u,u+r^{n+1}]\ \ \text{and} \ \
I_{u,1}=[u+(1-r)r^{n},u+r^{n}].
\end{equation*}
For real number $m\ge 1$ and $(x_{1},x_{2},\cdots,x_{k})\in \mathbb{R}_+^k$ or integer $m\ge 1$ and $(x_{1},x_{2},\cdots,x_{k})\in \mathbb{C}^k$, define
\begin{equation*}
  f_{k,m}(x_{1},x_{2},\cdots,x_{k})=x_{1}^{m}+x_{2}^{m}+\cdots+x_{k}^{m}.
\end{equation*}
For real number $m\ge 1$, let $G_\alpha(m)$ denote the smallest positive integer $k$ such that $[0,k]=f_{k,m}(C_\alpha^k)$, and let $G(m)=G_3(m)$.
For $a,x\in\mathbb{R}_+$ and $a\neq 1$, we denote by  $\log x$ the natural logarithm of $x$ with base $e$ and $\log_a x=\frac{\log x}{\log a}$.
For $x\in\mathbb{R}$, let
\begin{equation*}
  \lfloor x\rfloor=\max\{k\in\mathbb{Z}, k\le x\}\ \ \text{and}\ \
  \lceil x\rceil=\min\{k\in\mathbb{Z}, k\ge x\}.
\end{equation*}
If $X$ is a non-empty compact subset of $\mathbb{R}$, then $[\inf X,\sup X]\setminus X=\bigcup\limits_{n\in\mathbb{N}}(a_n,b_n)$ is a union of disjoint open intervals.
Define
\begin{equation*}
  \text{Gap}(X)=\sup_n(b_n-a_n).
\end{equation*}

\begin{lemma}\label{lemgap}
  If $X$ is a non-empty compact subset of $\mathbb{R}$ and $0<\text{Gap}(X)\leq b-a$, then $X+[a,b]$ is an interval.
\end{lemma}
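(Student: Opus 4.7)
The plan is to prove the equality $X+[a,b] = [\inf X + a,\, \sup X + b]$; since the right-hand side is an interval, this suffices. The inclusion $\subseteq$ is immediate from compactness of $X$. For the reverse inclusion, given $t \in [\inf X + a,\, \sup X + b]$, I want to produce $x \in X$ and $s \in [a,b]$ with $t = x+s$, equivalently to show
\[
X \cap [t-b,\, t-a] \neq \emptyset.
\]

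First I would dispose of the boundary cases. If $t - b < \inf X$, then $t \geq \inf X + a$ gives $t - a \geq \inf X$, so $\inf X \in X \cap [t-b,\,t-a]$. Symmetrically, if $t - a > \sup X$, then $\sup X \in X \cap [t-b,\,t-a]$.

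The essential case is $[t-b,\,t-a] \subseteq [\inf X, \sup X]$. Suppose for contradiction that $X \cap [t-b,\,t-a] = \emptyset$. Then $[t-b,\,t-a]$ is a nonempty connected subset of the open set
\[
[\inf X, \sup X] \setminus X = \bigcup_{n \in \mathbb{N}} (a_n, b_n),
\]
hence lies in a single component $(a_n, b_n)$. But containing a closed interval of length $b-a$ strictly inside an open interval $(a_n, b_n)$ forces $b_n - a_n > b - a$, which contradicts $b_n - a_n \leq \text{Gap}(X) \leq b - a$.

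The only subtle point, and the one I would flag as the main obstacle, is that $\text{Gap}(X)$ is defined as a supremum rather than a maximum, so the hypothesis only gives $b_n - a_n \leq b - a$; luckily, the containment of a closed interval in an open one upgrades to a strict length comparison, which is exactly what is needed to reach the contradiction under the non-strict assumption $0 < \text{Gap}(X) \leq b - a$.
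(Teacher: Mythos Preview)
Your proof is correct and follows essentially the same approach as the paper: both establish $X+[a,b]=[\inf X+a,\sup X+b]$ by reducing the nontrivial inclusion to showing $X\cap[t-b,t-a]\neq\emptyset$, then splitting into the same boundary cases and the interior case. The paper simply asserts in the interior case that $\mathrm{Gap}(X)\le b-a$ forces the intersection to be nonempty, while you spell this out via the connected-component argument and the strict length comparison; this extra justification is a welcome detail but not a different method.
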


\begin{proof}
Let $L=\inf X$, $R=\sup X$.
We only need to prove $X+[a,b]=[L+a,R+b]$.
It is obvious that $X+[a,b]\subseteq[L+a,R+b]$, so it suffices to show $[L+a,R+b]\subseteq X+[a,b]$.

Let $y\in[L+a,R+b]$.
We have $y-b\leq R$ and $y-a\ge L$.
\begin{enumerate}
  \item If $y-b\leq L$, then $L\in[y-b,y-a]$.
  \item If $y-a\ge R$, then $R\in[y-b,y-a]$.
  \item If $L<y-b<y-a<R$, then $[y-b,y-a]\subseteq [L,R]$.
Notice that $\text{Gap}(X)\leq b-a$.
We have $[y-b,y-a]\cap X\neq \emptyset$.
\end{enumerate}
From the discussion of the above three cases, we derive $[y-b,y-a]\cap X\neq \emptyset$.
Choosing $x\in [y-b,y-a]\cap X$, we obtain $y-x\in[a,b]$ and $y=x+(y-x)\in X+[a,b]$.
The proof is complete.
\end{proof}

Corollary 2.2.1 and Corollary 2.3.1 of \cite{Guo} are used frequently in our Section 3.
For completeness, we state and prove them as follows.

\begin{lemma}\label{fkmCk}
  For real number $m\ge 1$, $\alpha>1$ and $k\in\mathbb{N}$, we have $f_{k,m}(C_\alpha^{k})=\bigcap\limits_{n=1}^{\infty} f_{k,m}(F_n^{k})$.
\end{lemma}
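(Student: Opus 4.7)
The plan is to prove the two inclusions separately; both are standard consequences of the fact that $(F_n)_{n\ge 1}$ is a decreasing sequence of non-empty compact subsets of $[0,1]$ with $\bigcap_n F_n = C_\alpha$, together with the continuity of $f_{k,m}$.

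For the inclusion $\subseteq$, I would simply observe that $C_\alpha \subseteq F_n$ for every $n \ge 1$, so $C_\alpha^k \subseteq F_n^k$ and therefore $f_{k,m}(C_\alpha^k) \subseteq f_{k,m}(F_n^k)$. Intersecting over $n$ gives the desired containment.

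For the reverse inclusion $\supseteq$, I would take any $y \in \bigcap_{n=1}^\infty f_{k,m}(F_n^k)$ and, for each $n$, pick a point $(x_1^{(n)},\ldots,x_k^{(n)}) \in F_n^k$ with $f_{k,m}(x_1^{(n)},\ldots,x_k^{(n)}) = y$. Since $F_n \subseteq [0,1]$, the sequence lives in the compact set $[0,1]^k$, so by Bolzano--Weierstrass there is a subsequence $(n_\ell)_{\ell \ge 1}$ along which it converges to some $(x_1,\ldots,x_k) \in [0,1]^k$. Continuity of $f_{k,m}$ gives $f_{k,m}(x_1,\ldots,x_k) = y$, so I only need to check that each coordinate lies in $C_\alpha$. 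Fix $j$ and $N$; for every $\ell$ with $n_\ell \ge N$ we have $x_j^{(n_\ell)} \in F_{n_\ell} \subseteq F_N$, and since $F_N$ is closed the limit $x_j$ also lies in $F_N$. As $N$ is arbitrary, $x_j \in \bigcap_{N \ge 1} F_N = C_\alpha$, so $(x_1,\ldots,x_k) \in C_\alpha^k$ and $y \in f_{k,m}(C_\alpha^k)$.

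There is no real obstacle here: the argument is a standard compactness/continuity exchange, and the only point one must be careful about is using the monotonicity $F_{n_\ell} \subseteq F_N$ (rather than trying to argue directly with $F_{n_\ell}$) so that the closedness of a fixed set $F_N$ can be invoked in passing to the limit. The same argument works uniformly in $m \ge 1$ (real) and $k \in \mathbb{N}$, and does not depend on the specific self-similar structure of $C_\alpha$ beyond the nested-compact description $C_\alpha = \bigcap_n F_n$.
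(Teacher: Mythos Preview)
Your proof is correct and follows essentially the same approach as the paper: the paper states and proves the slightly more general fact that $f\bigl(\bigcap_n X_n\bigr)=\bigcap_n f(X_n)$ for any continuous $f$ and any decreasing sequence of non-empty compact sets $X_n$, using exactly the compactness-plus-monotonicity argument you give. The only cosmetic difference is that the paper extracts the convergent subsequence inside $X_1$ rather than $[0,1]^k$, which is immaterial.
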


\begin{proof}
This lemma is a special case of the following result.
Let $f$ be a real valued continuous function on $\mathbb{R}^k$ with $k\geq 1$.
If $\{X_n\}_{n=1}^{\infty}$ is a decreasing sequence of non-empty compact subsets of $\mathbb{R}^k$, then $f\left(\bigcap\limits_{n=1}^{\infty}X_n\right)=\bigcap\limits_{n=1}^{\infty}f(X_n)$.
We now prove this general case.

On the one hand, we have $\bigcap\limits_{n=1}^{\infty}X_n \subseteq X_l$ for any positive integer $l$, then  $f\left(\bigcap\limits_{n=1}^{\infty}X_n\right)\subseteq\bigcap\limits_{l=1}^{\infty}f(X_l)$.

On the other hand, let $y\in\bigcap\limits_{n=1}^{\infty}f(X_n)$.
There is an $x_n\in X_n$ such that $f(x_n)=y$ for $n\geq 1$.
Since $\{X_n\}_{n=1}^{\infty}$ is decreasing and $X_1$ is compact, we obtain that $\{x_n\}_{n=1}^{\infty}\subseteq X_1$ has a convergent subsequence $\{x_{n_j}\}_{j=1}^{\infty}$, which converges to some $x$ in $X_1$.
We have $f(x)=y$ from the continuity of $f$.
Note that $\{x_{n_j}\}_{j\geq l}\subseteq X_l$ and $X_l$ is compact for any positive integer $l$.
We obtain $x\in X_l$ and $x\in \bigcap\limits_{l=1}^{\infty}X_l$.
It follows that $y=f(x)\in f\left(\bigcap\limits_{l=1}^{\infty}X_l\right)$ and $\bigcap\limits_{n=1}^{\infty}f(X_n)\subseteq f\left(\bigcap\limits_{l=1}^{\infty}X_l\right)$.
The proof is complete.
\end{proof}

\begin{lemma}\label{lemrk}
  For real number $m\ge 1$, $\alpha>1$ and $k\in\mathbb{N}$, if $X\subseteq f_{k,m}(C_\alpha^{k})$, then $r^{lm} X\subseteq f_{k,m}(C_\alpha^{k})$ for any $l\in\mathbb{N}$.
\end{lemma}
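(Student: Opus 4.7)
The plan is to exploit the self-similarity of $C_\alpha$. Since $C_\alpha$ is the attractor of the iterated function system $\{g_0, g_1\}$, we have $C_\alpha = g_0(C_\alpha) \cup g_1(C_\alpha)$, and in particular $g_0(C_\alpha) = rC_\alpha \subseteq C_\alpha$. Iterating this inclusion $l$ times yields $r^l C_\alpha = g_0^l(C_\alpha) \subseteq C_\alpha$ for every $l \in \mathbb{N}$.

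From here the statement is essentially a one-line scaling argument. Let $y \in X \subseteq f_{k,m}(C_\alpha^k)$. Then there exist $x_1, \dots, x_k \in C_\alpha$ with $y = x_1^m + x_2^m + \cdots + x_k^m$. Multiplying by $r^{lm}$ and distributing the scalar inside the $m$-th powers gives
\begin{equation*}
r^{lm} y = (r^l x_1)^m + (r^l x_2)^m + \cdots + (r^l x_k)^m.
\end{equation*}
By the first step each $r^l x_j$ lies in $C_\alpha$, so $r^{lm} y \in f_{k,m}(C_\alpha^k)$, which proves the claim.

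There is no real obstacle here: the only thing to verify carefully is the inclusion $rC_\alpha \subseteq C_\alpha$, which follows immediately from the definition of $C_\alpha$ as the intersection $\bigcap_{n=1}^\infty F_n$ together with $g_0(F_n) \subseteq F_{n+1}$, or equivalently from the IFS fixed-point relation recalled above. The rest is just the homogeneity of $x \mapsto x^m$.
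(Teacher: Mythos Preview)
Your proof is correct and follows essentially the same approach as the paper's: both use the inclusion $rC_\alpha\subseteq C_\alpha$ together with the homogeneity $r^{lm}\sum x_j^m=\sum(r^l x_j)^m$. The only cosmetic difference is that the paper inducts on $l$ (proving $r^m X\subseteq f_{k,m}(C_\alpha^k)$ first and then iterating), whereas you apply $r^lC_\alpha\subseteq C_\alpha$ directly.
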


\begin{proof}
Let $y\in X\subseteq f_{k,m}(C_\alpha^{k})$.
Then $y=f_{k,m}(x_1,x_2,\cdots,x_k)$ for some  $(x_1,x_2,\cdots,x_k)\in C_\alpha^{k}$.
Since $x\in C_\alpha$ implies $rx\in C_\alpha$, we obtain $(rx_1,rx_2,\cdots,rx_k)\in C_\alpha^{k}$.
It follows from the definition of $f_{k,m}$ that
\begin{equation*}
r^m y=r^m f_{k,m}(x_1,x_2,\cdots,x_k)=f_{k,m}(rx_1,rx_2,\cdots,rx_k)\in f_{k,m}(C_\alpha^{k}).
\end{equation*}
By induction on $l$, we derive $r^{lm}y\in f_{k,m}(C_\alpha^{k})$ for $l\in\mathbb{N}$.
Hence, we obtain $r^{lm} X\subseteq f_{k,m}(C_\alpha^{k})$.
\end{proof}

Some inequalities are used frequently in this article, we state and prove them for completeness.

\begin{lemma}\label{inequality}
Suppose that $t>1$ and $m\geq 1$.
\begin{enumerate}
  \item For $x\ge 0$, we have $\arctan(tx) \leq t\arctan x$.
  \item For $x\in[0,1]$, we have $\arctan x \leq x \leq\frac{4}{\pi}\arctan x$, $x\leq\frac{\pi}{2}\sin x$ and
     \begin{equation*}
    \frac{5}{4}\arctan x\leq \arctan\left(\frac{3}{2} x\right)\le\frac{3}{2}\arctan x\leq \arctan (3x)\leq 3\arctan x-x^3.
  \end{equation*}
  \item For $x\in[0,\frac{1}{3}]$, we have $\frac{7}{5}\arctan x\leq \arctan(\frac{3}{2} x)$ and $\frac{9}{5}\arctan x \leq \arctan (2x)\leq 2\arctan x-x^3$.
  \item For $x\in [0,1)$, we have $\ln(1-x)\geq x\ln(1-x)-x$.
  \item For $x\in[0,\frac{1}{t}]$, we have $(1-x)^m-(1-tx)^m\leq 1-\frac{1}{t}$.
\end{enumerate}
\end{lemma}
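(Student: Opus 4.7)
My overall strategy is to reformulate each inequality as nonnegativity of a single-variable auxiliary function $h$ with $h(0)=0$ (or with an easily evaluated endpoint), and then resolve it by sign analysis of $h'$, iterated one level when needed. For (1), setting $h(x)=t\arctan x-\arctan(tx)$ gives $h(0)=0$ and
\[
h'(x)=\frac{t}{1+x^2}-\frac{t}{1+t^2x^2}\ge 0\quad(t>1),
\]
so $h\ge 0$ on $[0,\infty)$. This already yields the upper bounds $\arctan(\tfrac32 x)\le\tfrac32\arctan x$ and $\arctan(3x)\le 3\arctan x$ in the chain of (2) by specialization of $t$. The elementary estimates $\arctan x\le x$, $x\le(4/\pi)\arctan x$, and Jordan's inequality $x\le(\pi/2)\sin x$ on $[0,1]$ follow from the fact that $(\arctan x)/x$ and $(\sin x)/x$ are decreasing on $(0,\pi/2]$, combined with evaluation at the appropriate right endpoint.

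The nontrivial comparisons in (2) and (3) are the lower bounds $\tfrac54\arctan x\le\arctan(\tfrac32 x)$ and $\tfrac32\arctan x\le\arctan(3x)$ on $[0,1]$, together with $\tfrac75\arctan x\le\arctan(\tfrac32 x)$ and $\tfrac95\arctan x\le\arctan(2x)$ on $[0,1/3]$. Here the relevant difference $h$ vanishes at $0$ but $h'$ is not of one sign, so the plan is to show that $h'$ has exactly one root on the interval (meaning $h$ first increases and then decreases), and then verify $h\ge 0$ at the right endpoint by a direct numerical estimate. The strengthenings $\arctan(3x)\le 3\arctan x-x^3$ and $\arctan(2x)\le 2\arctan x-x^3$ require one additional round of differentiation, after which the remaining inequality reduces to a polynomial in $x^2$ checkable term by term.

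Item (4) is equivalent to $(1-x)\ln(1-x)+x\ge 0$; substituting $y=1-x\in(0,1]$ transforms this into $g(y):=y\ln y+1-y\ge 0$, and since $g(1)=0$ and $g'(y)=\ln y\le 0$ on $(0,1]$, the function $g$ decreases to its minimum $0$ at $y=1$.

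The main obstacle is (5). For $m=1$ the inequality is linear and tight: $(1-x)-(1-tx)=(t-1)x\le(t-1)/t=1-1/t$ on $[0,1/t]$. For $m>1$ I would set $f(x)=(1-x)^m-(1-tx)^m$; one has $f(0)=0$, $f(1/t)=(1-1/t)^m\le 1-1/t$, and $f'(0)>0>f'(1/t)$, so the maximum is attained at an interior critical point $x^*$. The ratio $(1-tx)/(1-x)$ is strictly monotonic on $[0,1/t)$, so $f'$ has a unique zero there. Solving $f'(x^*)=0$ gives $1-tx^*=c(1-x^*)$ with $c=t^{-1/(m-1)}\in(0,1)$, whence
\[
1-x^*=\frac{t-1}{t-c},\qquad 1-tx^*=\frac{c(t-1)}{t-c},
\]
and after using $t-c=t\bigl(1-t^{-m/(m-1)}\bigr)$ one arrives at the closed form
\[
f(x^*)=\Bigl(1-\tfrac{1}{t}\Bigr)^{m}\Bigl(1-t^{-m/(m-1)}\Bigr)^{-(m-1)}.
\]
The desired bound $f(x^*)\le 1-1/t$ then reduces to $1-1/t\le 1-t^{-m/(m-1)}$, i.e.\ to $m/(m-1)\ge 1$, which is immediate. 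I expect the simplification leading to the closed form for $f(x^*)$, and getting the direction of the final inequality right, to be the only delicate bookkeeping in the proof.
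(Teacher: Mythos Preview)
Your proposal is correct, and items (4) and (5) are handled essentially as in the paper (for (5) the paper writes the critical point with $a=t^{1/(m-1)}=1/c$ and bounds $f(x^*)$ by the cruder estimate $\frac{t-1}{ta-1}\le\frac{1}{a}$, but your closed-form computation is equivalent and arguably cleaner).

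For (1)--(3) you take a genuinely different route. The paper's organizing observation is that
\[
\varphi_t(x)=\frac{\arctan(tx)}{\arctan x}
\]
is \emph{decreasing} on $(0,\infty)$ for each $t>1$; this is proved by showing $h_t(y)=\arctan(t\tan y)$ is concave on $[0,\pi/2)$ and substituting $y=\arctan x$. Item (1) then follows from $\varphi_t(x)\le\varphi_t(0^+)=t$, and all four ``lower-bound'' inequalities in (2)--(3) reduce to the single numerical check $\varphi_t(\text{right endpoint})\ge\text{constant}$, with no derivative analysis needed case by case. Your approach---direct computation of $h'$, locating its unique sign change, and verifying $h(\text{right endpoint})\ge 0$---works equally well and is more elementary, but requires repeating essentially the same argument four times. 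The paper's $\varphi_t$ monotonicity buys a uniform one-line reduction.

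One imprecision to flag: for $\arctan(3x)\le 3\arctan x-x^3$ on $[0,1]$, one differentiation gives
\[
\psi_1'(x)=\frac{3x^2(7-10x^2-9x^4)}{(1+x^2)(1+9x^2)},
\]
and the polynomial $7-10x^2-9x^4$ is \emph{not} nonnegative on $[0,1]$ (it is negative at $x=1$). So this case cannot be finished by ``checking term by term''; you need the same increase-then-decrease argument you described for the lower bounds, followed by the endpoint check $\psi_1(1)=\tfrac{3\pi}{4}-\arctan 3-1>0$. (The $2x$ case on $[0,1/3]$ does go through directly, since $1-5x^2-4x^4>0$ there.) This is easy to fix and does not affect the overall correctness of your plan.
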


\begin{proof}
Let $h_t(y)=\arctan(t\tan y)$ for $y\in[0,\frac{\pi}{2})$.
Then $h_t''(y)=-\frac{2t(t^2-1)(\text{sec} y)^2\tan y}{(1+(t\tan y)^2)^2}\le 0$.
From the concavity of $h_t(y)$ and $h_t(0)=0$, we derive $\frac{h_t(y)}{y}$ is decreasing.
For $t>1$, define
\begin{equation}\label{1}
  \varphi_t(x)=\frac{\arctan(tx)}{\arctan x}.
\end{equation}
Then $\varphi_t(x)$ is decreasing for $x>0$ by letting $y=\arctan x$.
\begin{enumerate}
  \item Since $\varphi_t(x)\le\lim\limits_{y\to 0+}\varphi_t(y)=t$, the first inequality holds.
  \item Since $\arctan x$ is concave for $x\in [0,1]$ and $\arctan 0=0$, the function $\frac{\arctan x}{x}$ is decreasing, then it is easy to see that $\arctan x \leq x \leq\frac{4}{\pi}\arctan x$.
      Similarly, we have $\sin x\le x\le \frac{\pi}{2}\sin x$ for $x\in[0,\frac{\pi}{2}]$.

      For $x\in [0,1]$, we derive $\varphi_{\frac{3}{2}}(x)\ge\varphi_{\frac{3}{2}}(1)>\frac{5}{4}$ and $\varphi_3(x)\ge\varphi_3(1)>\frac{3}{2}$ by letting $t=\frac{3}{2}$ and $t=3$ in (\ref{1}), respectively. Then $\frac{5}{4}\arctan x\leq \arctan(\frac{3}{2} x)$ and $\frac{3}{2}\arctan x\leq \arctan(3x)$.

      Let $\psi_1(x)=3\arctan x-\arctan (3x)-x^3$.
      Then $\psi_1'(x)=3x^2\cdot \frac{7-(10x^2+9x^4)}{(1+x^2)(1+9x^2)}$ and it is not hard to see that $\psi_1(x)$ increases firstly and then decreases for $x\in[0,1]$.
      Note that $\psi_1(0)=0$ and $\psi_1(1)=\frac{3\pi}{4}-\arctan 3-1>0$.
      We derive $\psi_1(x)\geq 0$ and $\arctan (3x)\leq 3\arctan x-x^3$ for $x\in[0,1]$.
  \item For $x\in [0,\frac{1}{3}]$, we derive  $\varphi_{\frac{3}{2}}(x)\ge\varphi_{\frac{3}{2}}(\frac{1}{3})>\frac{7}{5}$
      and $\varphi_{2}(x)\ge\varphi_{2}(\frac{1}{3})>\frac{9}{5}$ by letting $t=\frac{3}{2}$ and $t=2$ in (\ref{1}), respectively.
      Then $\frac{7}{5}\arctan x\leq \arctan(\frac{3}{2} x)$ and $\frac{9}{5}\arctan x\leq \arctan(2x)$.

      Let $\psi_2(x)=2\arctan x-\arctan(2x)-x^3$.
      Then $\psi_2'(x)=3x^2\cdot \frac{1-(5x^2+4x^4)}{(1+x^2)(1+4x^2)}\geq 0$ for $x\in[0,\frac{1}{3}]$.
      It follows that $\psi_2(x)\geq\psi_2(0)=0$ and $\arctan (2x)\leq 2\arctan x-x^3$ for $x\in\left[0,\frac{1}{3}\right]$.
  \item Using the fact that $\left(\ln(1-x)-x\ln(1-x)+x\right)'=-\ln(1-x)>0$ for $0\le x<1$, we obtain  $\ln(1-x)\geq x\ln(1-x)-x$ for $0\le x<1$.
  \item It is clear that if $m=1$, then the last inequality in the lemma is valid.
      We next assume $m>1$.
      Let $h(x)=(1-x)^m-(1-tx)^m$.
      Then $h'(x)=-m(1-x)^{m-1}+tm(1-tx)^{m-1}$.
      Let $h'(x)=0$, we have $x=\frac{a-1}{ta-1}$, where $a=t^{\frac{1}{m-1}}$ and $a^m=ta$.
      Moreover, we obtain $h'(x)\geq 0$ for $x\in [0,\frac{a-1}{at-1}]$ and $h'(x)\leq 0$ for $x\in [\frac{a-1}{at-1},\frac{1}{t}]$.
Since $t>1$, we have $a>1$ and $\frac{t-1}{ta-1}\le\frac{1}{a}$.
Therefore, we obtain
  \begin{equation*}
    h(x)\le h\left(\frac{a-1}{ta-1}\right)=\frac{(a^m-1)(t-1)^m}{(ta-1)^m}
    =(t-1)\left(\frac{t-1}{ta-1}\right)^{m-1}\leq \frac{t-1}{a^{m-1}}=1-\frac{1}{t}.
  \end{equation*}
The last inequality in the lemma holds.
\end{enumerate}
\end{proof}

\section{Sum of $m$-th powers of elements in $C_\alpha$}

Let $m\geq 1$ be a real number throughout this section.

The next two lemmas improve the Lemma 2.3 and Lemma 2.4 of \cite{Wang} and give a criterion for finding some intervals in $f_{k,m}(C_\alpha^k)$.

\begin{lemma}
Suppose that $k\in\mathbb{N}$, $k\geq 2$ and $u_{1},u_{2},\cdots,u_{k}\in L_{n}$.
If
\begin{equation*}
  \sum\limits_{i\neq M}u_i^{m-1} \geq \lambda(u_M+r^n-r^{n+1})^{m-1},
\end{equation*}
where $\lambda=\frac{1}{r}-2$ and $u_M=\max\{u_{1},u_{2},\cdots,u_{k}\}$, then
\begin{equation*}
f_{k,m}(I_{u_{1}}\times I_{u_{2}}\times\cdots\times I_{u_{k}})=f_{k,m}\left((I_{u_1,0}\cup I_{u_1,1})\times (I_{u_2,0}\cup I_{u_2,1})\times\cdots\times (I_{u_k,0}\cup I_{u_k,1})\right).
\end{equation*}
\end{lemma}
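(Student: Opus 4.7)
The inclusion $\supseteq$ is immediate since $I_{u_j,0}\cup I_{u_j,1}\subseteq I_{u_j}$, so the content lies in proving $f_{k,m}(I_{u_1}\times\cdots\times I_{u_k})\subseteq f_{k,m}\bigl((I_{u_1,0}\cup I_{u_1,1})\times\cdots\times (I_{u_k,0}\cup I_{u_k,1})\bigr)$. By continuity and connectedness, $f_{k,m}(I_{u_1}\times\cdots\times I_{u_k})=[A,B]$ with $A=\sum_{j=1}^k u_j^m$ and $B=\sum_{j=1}^k (u_j+r^n)^m$. By the symmetry of $f_{k,m}$ in its arguments I may relabel so that $u_1\le u_2\le\cdots\le u_k=u_M$.

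Set $B_\sigma=\prod_j I_{u_j,\sigma_j}$ for $\sigma\in\{0,1\}^k$ and consider the chain $\sigma^{(i)}=(\underbrace{1,\ldots,1}_{i},\underbrace{0,\ldots,0}_{k-i})$ for $i=0,1,\ldots,k$, so we flip coordinates in the order of increasing $u_j$. Each $f_{k,m}(B_{\sigma^{(i)}})$ is an interval; a direct computation shows both $\min f_{k,m}(B_{\sigma^{(i)}})$ and $\max f_{k,m}(B_{\sigma^{(i)}})$ are strictly increasing in $i$, with $\min f_{k,m}(B_{\sigma^{(0)}})=A$ and $\max f_{k,m}(B_{\sigma^{(k)}})=B$. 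Thus, if the overlap $\max f_{k,m}(B_{\sigma^{(i-1)}})\ge\min f_{k,m}(B_{\sigma^{(i)}})$ holds for every $i\in\{1,\ldots,k\}$, then $\bigcup_{i=0}^k f_{k,m}(B_{\sigma^{(i)}})=[A,B]$, which forces the desired inclusion.

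Expanding the overlap condition at step $i$, the coordinates that do not change cancel and what remains is
\[
\sum_{j<i}\bigl[(u_j+r^n)^m-(u_j+(1-r)r^n)^m\bigr]+\sum_{j>i}\bigl[(u_j+r^{n+1})^m-u_j^m\bigr]\ge (u_i+(1-r)r^n)^m-(u_i+r^{n+1})^m.
\]
The gap in $I_{u_i}$ has length $(1-2r)r^n=\lambda r^{n+1}$, so by the mean value theorem the right-hand side is at most $m\lambda(u_i+r^n-r^{n+1})^{m-1}r^{n+1}$; evaluating the derivative of $x^m$ at the left endpoint of each summand on the left (and using $(u_j+(1-r)r^n)^{m-1}\ge u_j^{m-1}$ for $j<i$) gives the lower bound $mr^{n+1}\sum_{j\ne i}u_j^{m-1}$. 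It therefore suffices to prove $\sum_{j\ne i} u_j^{m-1}\ge\lambda(u_i+r^n-r^{n+1})^{m-1}$ for every $i\in\{1,\ldots,k\}$.

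The case $i=k=M$ is precisely the hypothesis. For $i<k$ the inequality is weaker: $\sum_{j\ne i}u_j^{m-1}-\sum_{j\ne k}u_j^{m-1}=u_k^{m-1}-u_i^{m-1}\ge 0$ since $u_k\ge u_i$, while $(u_i+r^n-r^{n+1})^{m-1}\le(u_M+r^n-r^{n+1})^{m-1}$, so the hypothesis at $M$ implies the sufficient inequality at $i$. The main obstacle I expect is the combinatorial bookkeeping: choosing the chain through $\{0,1\}^k$ that flips coordinates in the order of increasing $u_j$ is what makes the worst step-$i$ inequality coincide with the single condition stated for the index $M$; any other Hamiltonian path through the $2^k$ corners would require a strictly stronger hypothesis.
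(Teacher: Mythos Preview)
Your proof is correct and follows essentially the same strategy as the paper: decompose the right-hand side into the $2^k$ boxes $Q_v=f_{k,m}(I_{u_1,v_1}\times\cdots\times I_{u_k,v_k})$, show that boxes differing in a single coordinate have overlapping images, and reduce the resulting inequality to the stated hypothesis at the index $M$. The paper carries out the overlap step for \emph{every} edge of the hypercube $\{0,1\}^k$, using monotonicity of the auxiliary function $h(x)=-(u_j+r^{n+1}+\lambda x)^m+\sum_{i\ne j}(a_i+x)^m$ on $[0,r^{n+1}]$, while you treat a single Hamiltonian chain via the mean value theorem.

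One inaccuracy worth flagging: your closing remark that ``any other Hamiltonian path through the $2^k$ corners would require a strictly stronger hypothesis'' is not correct. The paper's argument shows the same hypothesis suffices for every edge; and in fact your own MVT bounds already establish this, since the lower bound $m\,u_j^{m-1}r^{n+1}$ for the $j$-th summand on the left holds regardless of whether $\sigma_j=0$ or $\sigma_j=1$ (in the latter case one has $(u_j+r^n)^m-(u_j+(1-r)r^n)^m\ge m(u_j+(1-r)r^n)^{m-1}r^{n+1}\ge m\,u_j^{m-1}r^{n+1}$). So the sufficient condition $\sum_{j\ne i}u_j^{m-1}\ge\lambda(u_i+r^n-r^{n+1})^{m-1}$ depends only on which coordinate $i$ is flipped, not on the rest of the chain, and any path would do.
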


\begin{proof}
It is obvious that the right-hand side is a subset of the left-hand side since $I_{u_i,0}\cup I_{u_i,1}\subseteq I_{u_i}$ for $i=1,2,\cdots, k$.
Hence, we only need to prove
\begin{equation*}
f_{k,m}(I_{u_{1}}\times I_{u_{2}}\times\cdots\times I_{u_{k}})\subseteq f_{k,m}\left((I_{u_1,0}\cup I_{u_1,1})\times (I_{u_2,0}\cup I_{u_2,1})\times\cdots\times (I_{u_k,0}\cup I_{u_k,1})\right).
\end{equation*}

Let $Q_{v}=f_{k,m}(I_{u_1,v_1}\times I_{u_2,v_2}\times\cdots\times I_{u_k,v_k})$ for $v=v_{1}v_{2}\cdots v_{k}\in\{0,1\}^{k}$.
Then we have
\begin{equation*}
f_{k,m}\left((I_{u_1,0}\cup I_{u_1,1})\times (I_{u_2,0}\cup I_{u_2,1})\times\cdots\times (I_{u_k,0}\cup I_{u_k,1})\right)=\bigcup\limits_{v\in\{0,1\}^{k}}Q_{v}.
\end{equation*}
Note that
\begin{equation*}
  f_{k,m}(I_{u_{1}}\times I_{u_{2}}\times\cdots\times I_{u_{k}})=\left[\sum\limits_{i=1}^{k}u_i^m, \sum\limits_{i=1}^{k}(u_i+r^n)^m\right]
\end{equation*}
is an interval with the same minimum $\sum\limits_{i=1}^{k}u_i^m$ and maximum $\sum\limits_{i=1}^{k}(u_i+r^n)^m$ as $\bigcup\limits_{v\in\{0,1\}^{k}}Q_{v}$.
It suffices to show that $\bigcup\limits_{v\in\{0,1\}^{k}}Q_{v}$ is connected.

For any $v=v_{1}v_{2}\cdots v_{k},w=w_1 w_2\cdots w_k\in\{0,1\}^{k}$, define $d(v,w)=\sum\limits_{i=1}^{k}|v_i-w_i|$.
We show next that $Q_v \cap Q_w \neq \emptyset$ whenever $d(v,w)=1$, which implies that $\bigcup\limits_{v\in\{0,1\}^{k}}Q_{v}$ is connected.
If $d(v,w)=1$, then there is an index $j\in\{1,2,\cdots,k\}$ such that $v_j\ne w_j$ and $v_i=w_i$ for $i\neq j$.
Without loss of generality, we assume that $v_j=0,w_j=1$.
It follows that
\begin{equation*}
  I_{u_j,0}=[u_j, u_j+r^{n+1}],\ I_{u_j,1}=[u_j+(1-r)r^{n}, u_j+r^{n}],\ I_{u_i,v_i}=I_{u_i,w_i}=[a_{i}, a_i+r^{n+1}],\ i\neq j,
\end{equation*}
where $a_i=u_i+v_i(r^n-r^{n+1})\geq u_i$ for $i\neq j$.
Then
\begin{align*}
& Q_v=\left[u_j^m+\sum\limits_{i\neq j}a_i^{m}, (u_j+r^{n+1})^{m}+\sum\limits_{i\neq j}(a_i+r^{n+1})^{m}\right],\\
& Q_w=\left[(u_j+(1-r)r^{n})^{m}+\sum\limits_{i\neq j}a_i^{m}, (u_j+r^{n})^{m}+\sum\limits_{i\neq j}(a_i+r^{n+1})^{m}\right].
\end{align*}
If we can prove
\begin{equation}\label{lem3.1}
  (u_j+r^{n+1})^{m}+\sum\limits_{i\neq j}(a_i+r^{n+1})^{m}\ge(u_j+(1-r)r^{n})^{m}+\sum\limits_{i\neq j}a_i^{m},
\end{equation}
then $Q_v \cap Q_w \neq \emptyset$.
Let $h(x)=-(u_j+r^{n+1}+\lambda x)^{m}+\sum\limits_{i\neq j}(a_i+x)^{m}$ for $x\in[0,r^{n+1}]$.
Then
\begin{equation*}
  h'(x)=m\left(-\lambda (u_j+r^{n+1}+\lambda x)^{m-1}+\sum\limits_{i\neq j}(a_i+x)^{m-1}\right).
\end{equation*}
Since $a_i+x \geq u_i$ for $i\neq j$ and  $u_j+r^{n+1}+\lambda x\leq u_M+r^{n+1}+\lambda r^{n+1}=u_M+r^n-r^{n+1}$ for $x\in[0,r^{n+1}]$, we have $\sum\limits_{i\neq j}\left(\frac{a_i+x}{u_j+r^{n+1}+\lambda x}\right)^{m-1}\geq\sum\limits_{i\neq j}\left(\frac{u_i}{u_M+r^n-r^{n+1}}\right)^{m-1}\geq \sum\limits_{i\neq M}\left(\frac{u_i}{u_M+r^n-r^{n+1}}\right)^{m-1}\geq \lambda$ from the condition $\sum\limits_{i\neq M}u_i^{m-1} \geq \lambda(u_M+r^n-r^{n+1})^{m-1}$.
This implies $h'(x)\geq 0$ for $x\in[0,r^{n+1}]$.
Thus, $h(r^{n+1})\ge h(0)$, which is equivalent to (\ref{lem3.1}).
The proof is complete.
\end{proof}

\begin{lemma}\label{lamconditionp}
Suppose that $k\in\mathbb{N}$, $k\geq 2$ and $u_{1},u_{2},\cdots,u_{k}\in L_{n}$.
If
\begin{equation*}
  \sum\limits_{i\neq M}u_i^{m-1}\geq \lambda(u_M+r^{n})^{m-1},
\end{equation*}
where $\lambda=\frac{1}{r}-2$ and $u_M=\max\{u_{1},u_{2},\cdots,u_{k}\}$, then $f_{k,m}(I_{u_{1}}\times I_{u_{2}}\times\cdots\times I_{u_{k}})\subseteq f_{k,m}(C_\alpha^k)$.
More precisely, we have
\begin{equation*}
  f_{k,m}(I_{u_{1}}\times I_{u_{2}}\times\cdots\times I_{u_{k}})=f_{k,m}((I_{u_{1}}\times I_{u_{2}}\times\cdots\times I_{u_{k}})\cap C_{\alpha}^k).
\end{equation*}
\end{lemma}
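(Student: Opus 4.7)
The plan is to iterate the previous lemma and then pass to the limit by compactness, using \cref{fkmCk}. First observe that the hypothesis of the present lemma is strictly stronger than that of the previous lemma, since $u_M+r^n \ge u_M+r^n-r^{n+1}$. Hence the previous lemma applies at level $n$ and gives
\[
f_{k,m}(I_{u_1}\times\cdots\times I_{u_k})=\bigcup_{v\in\{0,1\}^k}f_{k,m}(I_{u_1,v_1}\times\cdots\times I_{u_k,v_k}).
\]
The crux of the proof is to show that the strengthened hypothesis is inherited by each of the $2^k$ level-$(n+1)$ subproducts on the right, so that the splitting may be continued indefinitely.

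For a fixed $v\in\{0,1\}^k$, write $u_i^{(1)}=u_i+v_i(1-r)r^n\in L_{n+1}$ and let $M^{(1)}$ be an index with $u_{M^{(1)}}^{(1)}=\max_i u_i^{(1)}$. The key observation, and the whole reason the constant in the hypothesis is $u_M+r^n$ rather than $u_M+r^n-r^{n+1}$, is that
\[
u_{M^{(1)}}^{(1)}+r^{n+1}\le u_M+(1-r)r^n+r^{n+1}=u_M+r^n.
\]
On the other hand, $u_i^{(1)}\ge u_i$ for every $i$, and since $M$ attains the maximum at level $n$ we have $u_M^{m-1}\ge u_{M^{(1)}}^{m-1}$, so
\[
\sum_{i\ne M^{(1)}}(u_i^{(1)})^{m-1}\ge\sum_{i\ne M^{(1)}}u_i^{m-1}\ge\sum_{i\ne M}u_i^{m-1}\ge\lambda(u_M+r^n)^{m-1}\ge\lambda(u_{M^{(1)}}^{(1)}+r^{n+1})^{m-1}.
\]
This is exactly the hypothesis at level $n+1$, so by induction we may iterate the splitting. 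For every $N\ge n$ this yields
\[
f_{k,m}(I_{u_1}\times\cdots\times I_{u_k})=f_{k,m}(X_N),\qquad X_N:=(I_{u_1}\cap F_N)\times\cdots\times(I_{u_k}\cap F_N).
\]

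Finally, $\{X_N\}_{N\ge n}$ is a decreasing sequence of non-empty compact subsets of $\mathbb{R}^k$ with
\[
\bigcap_{N\ge n}X_N=(I_{u_1}\times\cdots\times I_{u_k})\cap C_\alpha^k.
\]
Applying the general continuity-plus-compactness argument that is proved inside \cref{fkmCk} to the restriction of $f_{k,m}$, we conclude
\[
f_{k,m}(I_{u_1}\times\cdots\times I_{u_k})=\bigcap_{N\ge n}f_{k,m}(X_N)=f_{k,m}\bigl((I_{u_1}\times\cdots\times I_{u_k})\cap C_\alpha^k\bigr),
\]
which proves the displayed equality and \emph{a fortiori} the inclusion in $f_{k,m}(C_\alpha^k)$. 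The main obstacle is the inductive propagation of the hypothesis when the argmax index $M^{(1)}$ differs from $M$; this is exactly what the tighter bound $u_M+r^n$ (in place of $u_M+r^n-r^{n+1}$) is engineered to absorb.
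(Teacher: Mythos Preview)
Your proof is correct and follows essentially the same route as the paper: iterate Lemma~3.1 down the levels, using the key inequality $u_{M'}^{(1)}+r^{n+1}\le u_M+r^n$ to propagate the hypothesis, and then pass to the limit via the compactness argument from \cref{fkmCk}. The only cosmetic difference is that the paper bounds the level-$l$ data directly back to the original level-$n$ hypothesis (via $u'_{M'}+r^l\le u_M+r^n$) rather than formulating an explicit inductive inheritance from level to level, but the underlying inequality and the limiting step are identical.
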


\begin{proof}
For any $i=1,2,\cdots,k$ and integer $l\geq n$, let $\mathcal{F}_{i,l}=\{I\in\mathcal{F}_l:I\subseteq I_{u_i}\}$ and  $F_{i,l}=\bigcup\limits_{A\in\mathcal{F}_{i,l}}A\subseteq I_{u_i}$.
Then $f_{k,m}(I_{u_{1}}\times I_{u_{2}}\times\cdots\times I_{u_{k}})\supseteq f_{k,m}(F_{1,l}\times F_{2,l}\times\cdots\times F_{k,l})$ for all $l\geq n$.
It follows from Lemma \ref{fkmCk} that $f_{k,m}(C_\alpha^{k})=\bigcap\limits_{l=1}^{\infty} f_{k,m}(F_l^{k})=\bigcap\limits_{l=n}^{\infty} f_{k,m}(F_l^{k})$.
Then by $F_{i,l}\subseteq F_{l}$ for $i=1,2,\cdots,k$, we have
\begin{equation*}
  \bigcap_{l=n}^{\infty}f_{k,m}(F_{1,l}\times F_{2,l}\times\cdots\times F_{k,l}) \subseteq \bigcap_{l=n}^{\infty} f_{k,m}(F_l^{k})=f_{k,m}(C_\alpha^{k}).
\end{equation*}
So it suffices to show that
\begin{equation}\label{lem3.2}
  f_{k,m}(I_{u_{1}}\times I_{u_{2}}\times\cdots\times I_{u_{k}})\subseteq f_{k,m}(F_{1,l}\times F_{2,l}\times\cdots\times F_{k,l})
\end{equation}
for all $l\ge n$.
We now prove it by induction on $l$.
If $l=n$, then $F_{i,n}=I_{u_i}$ for $i=1,2,\cdots,k$.
Thus, (\ref{lem3.2}) holds trivially.
We assume (\ref{lem3.2}) holds for some $l\geq n$, that is,
\begin{equation*}
f_{k,m}(I_{u_{1}}\times I_{u_{2}}\times\cdots\times I_{u_{k}})\subseteq f_{k,m}(F_{1,l}\times F_{2,l}\times\cdots\times F_{k,l}).
\end{equation*}
Take $y\in f_{k,m}(I_{u_{1}}\times I_{u_{2}}\times\cdots\times I_{u_{k}})$, then there are $u_1',u_2',\cdots,u_k'\in L_l$ such that
\begin{equation*}
  I_{u_i'}\in\mathcal{F}_{i,l}\ \text{and}\ y\in f_{k,m}(I_{u_1'}\times I_{u_2'}\times\cdots\times I_{u_k'}).
\end{equation*}
Let $u_{M'}'=\max\{u_{1}',u_{2}',\cdots,u_{k}'\}$.
Then $u_{M'}'\leq u_M+r^n-r^l$ and $u_i' \geq u_i$ for $i=1,2,\cdots, k$.
Combining this with the condition of the lemma, we derive
\begin{equation*}
  \sum\limits_{i\neq M'}(u_i')^{m-1}\geq \sum\limits_{i\neq M}u_i^{m-1}\geq \lambda(u_M+r^{n})^{m-1}\geq \lambda(u_{M'}'+r^{l})^{m-1}\ge\lambda(u_{M'}'+r^l-r^{l+1})^{m-1}.
\end{equation*}
By Lemma 3.1, we obtain that
\begin{equation*}
f_{k,m}(I_{u_{1}'}\times I_{u_{2}'}\times\cdots\times I_{u_{k}'})=f_{k,m}\left((I_{u_1',0}\cup I_{u_1',1})\times (I_{u_2',0}\cup I_{u_2',1})\times\cdots\times (I_{u_k',0}\cup I_{u_k',1})\right).
\end{equation*}
Hence, there is a $v=(v_1,v_2,\cdots,v_k)\in\{0,1\}^k$ such that $y\in f_{k,m}(I_{u_1',v_1}\times I_{u_2',v_2}\times\cdots\times I_{u_k',v_k})$.
Note that $I_{u_i',v_i}\in\mathcal{F}_{i,l+1}$ for $i=1,2,\cdots,k$.
We derive $y\in f_{k,m}(F_{1,l+1}\times F_{2,l+1}\times\cdots\times F_{k,l+1})$ and (\ref{lem3.2}) holds for $l+1$.
The proof is complete.
\end{proof}

The following lemma gives a lower bound of $G_\alpha(m)$.

\begin{lemma}\label{thmlowerbound}
Suppose that $\alpha>1$ and $r=\frac{1}{2}(1-\frac{1}{\alpha})$.
For $m\ge 1$, if positive integer $k<(\frac{1}{r}-1)^m$, then
\begin{equation*}
  [0,k]\ne\{x_1^m+x_2^m+\cdots+x_k^m: x_j\in C_{\alpha},1\le j\le k\}.
\end{equation*}
In other words, we have $G_\alpha(m)\geq (\frac{1}{r}-1)^m$.
\end{lemma}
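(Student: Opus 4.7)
The plan is to exhibit an explicit nonempty subinterval of $[0,k]$ that lies outside $f_{k,m}(C_\alpha^k)$, using only the very first approximation $F_1 = [0,r] \cup [1-r,1]$ to $C_\alpha$. Since $C_\alpha \subseteq F_1$, every element of $C_\alpha$ is either ``small'' (in $[0,r]$) or ``large'' (in $[1-r,1]$), so each $m$-th power $x_j^m$ belongs to $[0,r^m] \cup [(1-r)^m, 1]$.

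Concretely, first I would split the $k$-tuples $(x_1,\dots,x_k)\in C_\alpha^k$ according to how many coordinates are large. If none of the $x_j$ lies in $[1-r,1]$, then
\begin{equation*}
  f_{k,m}(x_1,\dots,x_k) = \sum_{j=1}^k x_j^m \le k r^m.
\end{equation*}
Otherwise at least one $x_j \ge 1-r$, and since the remaining summands are non-negative,
\begin{equation*}
  f_{k,m}(x_1,\dots,x_k) \ge (1-r)^m.
\end{equation*}
Therefore $f_{k,m}(C_\alpha^k) \cap \bigl(k r^m,\,(1-r)^m\bigr) = \emptyset$.

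Next I would check that this forbidden interval is nonempty and contained in $[0,k]$. The hypothesis $k < \bigl(\frac{1}{r}-1\bigr)^m = \bigl(\frac{1-r}{r}\bigr)^m$ rearranges to $k r^m < (1-r)^m$, so the interval $(kr^m, (1-r)^m)$ is nonempty. Moreover $(1-r)^m < 1 \le k$, so the whole interval sits inside $[0,k]$. Consequently $[0,k] \not\subseteq f_{k,m}(C_\alpha^k)$, and the contrapositive gives the asserted lower bound $G_\alpha(m) \ge \bigl(\frac{1}{r}-1\bigr)^m$.

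There is essentially no obstacle here: the argument is a one-step gap-of-Cantor-set observation, and no deeper structure beyond the first-level decomposition $C_\alpha\subseteq[0,r]\cup[1-r,1]$ is needed. The only tiny nuisance to record is that the statement is vacuous when $k=0$, and that for $k\ge 1$ the inequality $(1-r)^m<1\le k$ indeed places the gap safely inside $[0,k]$; both are immediate from $r\in(0,\tfrac{1}{2})$.
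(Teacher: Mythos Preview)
Your proof is correct and follows essentially the same approach as the paper's own proof: both use the first-level decomposition $C_\alpha\subseteq[0,r]\cup[1-r,1]$ to show that $f_{k,m}(C_\alpha^k)$ misses the nonempty interval $(kr^m,(1-r)^m)$. Your write-up is slightly more careful in explicitly verifying that this gap lies inside $[0,k]$, but the argument is otherwise identical.
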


\begin{proof}
Note that for any $x\in C_\alpha$, we have $x\leq r$ or $x\geq 1-r$.
Let $(x_1,x_2,\cdots,x_k)\in C_\alpha^k$.
\begin{enumerate}
  \item If $x_j\leq r$ for all $1\le j\le k$, then $f(x_1,x_2,\cdots,x_k)=\sum\limits_{j=1}^{k}x_{j}^m\leq kr^m<(1-r)^m$.
  \item If $x_{j_0}\geq 1-r$ for some $1\le j_0\le k$, then $f(x_1,x_2,\cdots,x_k)=\sum\limits_{j=1}^{k}x_{j}^m\ge x_{j_0}^m\geq (1-r)^m$.
\end{enumerate}
Thus, $(kr^m,(1-r)^m)$ is not contained in $f_{k,m}(C_\alpha^k)$ for $k<(\frac{1}{r}-1)^m$.
Consequently, we obtain the conclusion.
\end{proof}

For the rest of this section, we shall provide an upper bound of $G_\alpha(m)$ for $m\ge 1$ and show that $G_\alpha(m)=\lceil(\frac{1}{r}-1)^m\rceil$ under some technical assumptions.

\begin{theorem}\label{m=1}
Suppose that $\alpha>1$ and  $r=\frac{1}{2}(1-\frac{1}{\alpha})\in (0,\frac{1}{2})$.
Then $\lceil\frac{1}{r}-1\rceil$ is the smallest positive integer $k$ that satisfies
\begin{equation*}
  [0,k]=\{x_1+x_2+\cdots+x_k:x_j\in C_{\alpha},1\le j\le k\}.
\end{equation*}
In other words, we have $G_{\alpha}(1)=\lceil\frac{1}{r}-1\rceil$.
\end{theorem}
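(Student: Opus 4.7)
The plan is to establish matching upper and lower bounds for $G_\alpha(1)$. For the lower bound, apply Lemma \ref{thmlowerbound} with $m = 1$: this gives $G_\alpha(1) \ge \tfrac{1}{r} - 1$, and since $G_\alpha(1)$ is a positive integer, we may round up to obtain $G_\alpha(1) \ge \lceil \tfrac{1}{r} - 1 \rceil$.

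For the upper bound, set $k = \lceil \tfrac{1}{r} - 1 \rceil$; note that $r < \tfrac{1}{2}$ forces $k \ge 2$. By Lemma \ref{fkmCk}, $f_{k,1}(C_\alpha^k) = \bigcap_{n \ge 1} f_{k,1}(F_n^k) \subseteq [0,k]$, so it suffices to prove
\[
f_{k,1}(F_n^k) = [0, k] \quad \text{for every } n \ge 1,
\]
and I will do this by induction on $n$, exploiting the self-similarity $F_{n+1} = rF_n \cup \bigl((1-r) + rF_n\bigr)$.

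For $n = 1$, we have $F_1 = [0,r] \cup [1-r,1]$. Classifying the $k$ summands in a sum from $F_1^k$ by how many (call it $i$) come from the right component gives
\[
f_{k,1}(F_1^k) = \bigcup_{i=0}^{k}\bigl[i(1-r),\; i(1-r) + kr\bigr].
\]
Two consecutive intervals in this union overlap (or touch) exactly when $kr \ge 1 - r$, i.e., $k \ge \tfrac{1}{r} - 1$, which holds by the choice of $k$; the union therefore equals $[0, k]$. For the inductive step, the same bookkeeping applied to the decomposition $F_{n+1} = rF_n \cup \bigl((1-r) + rF_n\bigr)$ yields
\[
f_{k,1}(F_{n+1}^k) = \bigcup_{i=0}^{k}\bigl(i(1-r) + r\cdot f_{k,1}(F_n^k)\bigr),
\]
and by the induction hypothesis this equals $\bigcup_{i=0}^{k} [i(1-r),\, i(1-r) + rk] = [0, k]$ by the same overlap condition.

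There is no serious obstacle; the argument reduces to the simple arithmetic inequality $kr \ge 1 - r$, which is precisely the content of the definition of $k$. The only conceptual point to register is that the same overlap condition governs both the base case (when the summands are taken from the two level-$1$ intervals) and every subsequent level, because the self-similar recursion replaces $[0,k]$ with $[0,rk]$ at each stage without changing the spacing $1-r$ between the translates.
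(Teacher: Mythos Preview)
Your proof is correct, but it takes a genuinely different route from the paper's. The paper first invokes Lemma~\ref{lamconditionp} with $u_1=\cdots=u_k=1-r$ to conclude directly that $[k(1-r),k]=f_{k,1}\bigl(([1-r,1]\cap C_\alpha)^k\bigr)$, and then uses the translation symmetry of $C_\alpha$: since $x\in[1-r,1]\cap C_\alpha$ implies $x-(1-r)\in C_\alpha$, one can subtract $(1-r)$ from any $l$ of the $k$ summands to move the target down by $l(1-r)$, and the inequality $kr\ge 1-r$ guarantees these translates cover $[0,k]$. Your argument instead appeals to Lemma~\ref{fkmCk} and proves $f_{k,1}(F_n^k)=[0,k]$ for every $n$ by induction on the level of the construction, using the self-similarity $F_{n+1}=rF_n\cup\bigl((1-r)+rF_n\bigr)$. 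Your approach is more self-contained and elementary in that it avoids the technical Lemma~\ref{lamconditionp} (developed mainly for general $m$); the paper's approach, on the other hand, illustrates how that lemma already handles the $m=1$ case and highlights the translation structure of $C_\alpha$, both of which are recurring themes in the later sections.
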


\begin{proof}
Let $k=\lceil\frac{1}{r}-1\rceil\ge 2$.
Consider Lemma \ref{lamconditionp} with $u_1=u_2=\cdots=u_k=1-r\in L_1$, we derive
\begin{equation*}
  [k(1-r),k]=\{x_1+x_2+\cdots+x_k:x_j\in[1-r,1]\cap C_{\alpha},1\le j\le k\}.
\end{equation*}
Note that $k-k(1-r)=kr\ge 1-r$.
Then for any $x\in [0,k]$, there is an integer $0\le l\le k$ such that $x+l(1-r)\in [k(1-r),k]$.
Assume $x+l(1-r)=x_1+x_2+\cdots+x_k$, where $x_j\in[1-r,1]\cap C_{\alpha}$ for $1\le j\le k$.
Let
\begin{equation*}
  y_j=\begin{cases}
        x_j-(1-r), & 1\le j\le l, \\
        x_j,& l+1\le j\le k.
      \end{cases}
\end{equation*}
It follows that $y_j\in C_{\alpha}$ for $1\le j\le k$ and $x=y_1+y_2+\cdots+y_k\in f_{k,1}(C_{\alpha}^k)$.
Furthermore, it follows from Lemma \ref{thmlowerbound} that such $k$ is the smallest.
Then the proof is complete.
\end{proof}

We now introduce some notation that will be used frequently in the following.
Define
\begin{align*}
  n_* & =n_*(r,m)=\lfloor-\log_r m\rfloor+1, \\
  k_* & =k_*(r,m)=\left\lfloor\lambda\left(1+\frac{r^{n_*}}{1-r}\right)^{m-1}\right\rfloor+2,\\
  l_0 & =l_0(r,m,k)=\left\lfloor m+\log_r\left(\frac{1}{\lambda}(1-r)(k-a)\right)\right\rfloor+1, \\
  m_0 & =m_0(r,m,k)=\left\lfloor m+\log_r\left(\frac{1}{\lambda}(1-r)(k+k_*-1-b)\right)\right\rfloor+1,
\end{align*}
where $\lambda=\frac{1}{r}-2$, $a=k_*(1-r)^m$ and $b=ar^m+(1-r)^m$.
For simplicity to presentation, we list some conditions on $k$.\\
$(A1)\quad k\ge \max\left\{k_*,\frac{\lambda}{(1-r)^{m-1}}+1\right\}$,\\
$(A2)\quad k\le \frac{\lambda}{(1-r)r^m}+a$,\\
$(A2') \quad k\le \frac{\lambda}{(1-r)r^m}+b+1-k_*$,\\
$(A3)\quad (k-a)r^m+(1-r+r^{l_0})^m\ge 2(1-r)^m$,\\
$(A4)\quad (k+k_*-1-b)\left(1+\frac{mr}{\lambda}(1-r)^m\right)\ge(\frac{1}{r}-1)^m+a-b$.\\
It is easy to see that $(A2')$ implies $(A2)$.

Applying Lemma \ref{lamconditionp}, we can find an interval in $f_{k,m}(C_\alpha^k)$, which is helpful to obtain an upper bound of $G_\alpha(m)$.

\begin{lemma}\label{lemk1}
If integer $k$ satisfies $(A1)$, then $[k_*(1-r)^m,k]\subseteq f_{k,m}(C_\alpha^k)$.
\end{lemma}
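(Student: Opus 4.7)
The plan is to cover $[k_*(1-r)^m, k]$ as a union of overlapping subintervals of $f_{k,m}(C_\alpha^k)$, each obtained from Lemma~\ref{lamconditionp}. The key observation is that both $0$ and $1-r$ lie in $L_n$ for every $n \ge 1$ (take $\sigma = 0^n$ and $\sigma = 10^{n-1}$ respectively), so we may freely use these two values at any chosen level. Concretely, for each integer $j$ with $k_* \le j \le k$ I would apply Lemma~\ref{lamconditionp} at level $n = n_*$ with $u_1 = \cdots = u_j = 1-r$ and $u_{j+1} = \cdots = u_k = 0$; since $u_M = 1-r$, the hypothesis reduces to $(j-1)(1-r)^{m-1} \ge \lambda(1-r+r^{n_*})^{m-1}$, which holds for $j \ge k_*$ by the very definition of $k_*$. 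This produces the interval
\[
[A_j, B_j] := \bigl[j(1-r)^m,\ j(1-r+r^{n_*})^m + (k-j)r^{mn_*}\bigr] \subseteq f_{k,m}(C_\alpha^k).
\]
A separate application of Lemma~\ref{lamconditionp} at level $n=1$ with $u_1 = \cdots = u_k = 1-r \in L_1$ yields $[k(1-r)^m, k] \subseteq f_{k,m}(C_\alpha^k)$; its hypothesis $(k-1)(1-r)^{m-1} \ge \lambda$ is exactly the second clause of $(A1)$.

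The main step is to show that consecutive level-$n_*$ intervals overlap, i.e.\ $A_{j+1} \le B_j$ for $k_* \le j \le k-1$, which rearranges to
\[
(1-r)^m \le j\bigl[(1-r+r^{n_*})^m - (1-r)^m\bigr] + (k-j)r^{mn_*}.
\]
Using the convexity bound $(1-r+r^{n_*})^m - (1-r)^m \ge mr^{n_*}(1-r)^{m-1}$, it suffices to verify $j \ge \tfrac{1-r}{mr^{n_*}}$. From $n_* \le -\log_r m + 1$ one obtains $r^{n_*} \ge r/m$, hence $\tfrac{1-r}{mr^{n_*}} \le \tfrac{1-r}{r} = \lambda + 1$; meanwhile the definition of $k_*$ together with $\lfloor x \rfloor \ge x - 1$ gives $k_* \ge \lambda(1 + r^{n_*}/(1-r))^{m-1} + 1 \ge \lambda + 1$, so indeed $j \ge k_* \ge \tfrac{1-r}{mr^{n_*}}$, as needed.

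Therefore $\bigcup_{j=k_*}^{k}[A_j, B_j]$ glues into the single interval $[k_*(1-r)^m,\, k(1-r+r^{n_*})^m]$, which overlaps $[k(1-r)^m, k]$ because $k(1-r)^m \le k(1-r+r^{n_*})^m$, and the total union is $[k_*(1-r)^m, k] \subseteq f_{k,m}(C_\alpha^k)$. There is no real obstacle beyond this overlap check; the constants $n_*$ and $k_*$ are defined precisely so that both the Lemma~\ref{lamconditionp} hypothesis and the overlap inequality close with a bit of slack.
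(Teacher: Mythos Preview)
Your proof is correct and follows essentially the same approach as the paper: both arguments use Lemma~\ref{lamconditionp} at level $n_*$ to produce an interval of length at least $(1-r)^m$ starting at $k_*(1-r)^m$, extend it by shifts of $(1-r)^m$ up to $k(1-r)^m$, and then glue on $[k(1-r)^m,k]$ via Lemma~\ref{lamconditionp} at level $n=1$. The only cosmetic difference is that the paper invokes Lemma~\ref{lamconditionp} once at level $n_*$ (with $k_*$ copies of $1-r$) and performs the shifts using $0,1-r\in C_\alpha$ directly, whereas you re-apply Lemma~\ref{lamconditionp} for each $j$ with the extra coordinates set to $0$; the governing overlap inequality $k_*\bigl[(1-r+r^{n_*})^m-(1-r)^m\bigr]\ge(1-r)^m$ is identical in both.
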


\begin{proof}
Using the fact that $k_*>\lambda\left(1+\frac{r^{n_*}}{1-r}\right)^{m-1}+1$, we obtain $(k_*-1)(1-r)^{m-1}\ge\lambda(1-r+r^{n_*})^{m-1}$.
It follows from Lemma \ref{lamconditionp} that $[k_*(1-r)^m,k_*(1-r+r^{n_*})^m]\subseteq f_{k_*,m}(C_\alpha^{k_*})$.
Note that $n_*\leq -\log_r m+1$.
We have $r^{n_*}\ge\frac{r}{m}$.
Then
\begin{align*}
  k_*\left((1-r+r^{n_*})^m-(1-r)^m\right) & \ge(\lambda+1)(1-r)^m\left(\left(1+\frac{r^{n_*}}{1-r}\right)^m-1\right)\\
  & \ge(\lambda+1)(1-r)^m\frac{m r^{n_*}}{1-r} \\
  & \ge(\lambda+1)(1-r)^m\frac{r}{1-r}=(1-r)^m.
\end{align*}
Since $0,1-r\in C_\alpha$ and $k\ge k_*$, we obtain
\begin{equation}\label{lem3.51}
  [k_*(1-r)^m,k_*(1-r+r^{n_*})^m+(k-k_*)(1-r)^m]\subseteq f_{k,m}(C_\alpha^{k})
\end{equation}
Notice that $k\ge \frac{\lambda}{(1-r)^{m-1}}+1$.
Then $(k-1)(1-r)^{m-1}\ge \lambda$.
Again, by Lemma \ref{lamconditionp}, we have
\begin{equation}\label{lem3.52}
  [k(1-r)^m,k]\subseteq f_{k,m}(C_\alpha^k).
\end{equation}
Combining (\ref{lem3.51}) and (\ref{lem3.52}), we see that $[k_*(1-r)^m,k]\subseteq f_{k,m}(C_\alpha^k)$ for $k\ge\max\left\{k_*,\frac{\lambda}{(1-r)^{m-1}}+1\right\}$.
\end{proof}

Using the above result, we can give an upper bound of $G_\alpha(m)$.

\begin{theorem}\label{lemupperbound}
Suppose that $m\ge 1$, $\alpha>1$ and $r=\frac{1}{2}(1-\frac{1}{\alpha})$.
Let
\begin{equation*}
  M=M(r,m)=\max\left\{\left(\frac{1}{r}-1\right)^m+\left(\lambda e^{\frac{1}{1-r}}+2\right)(1-r)^m,\lambda e^{\frac{1}{1-r}}+2,\frac{\lambda}{(1-r)^{m-1}}+1\right\},
\end{equation*}
where $\lambda=\frac{1}{r}-2$.
Then for each integer $\kappa\geq M+\lambda e^{\frac{1}{1-r}}+2$, we have
\begin{equation*}
  [0,\kappa]=\{x_1^m+x_2^m+\cdots+x_{\kappa}^m:x_j\in C_\alpha,1\le j\le \kappa\}.
\end{equation*}
In other words, we have $G_{\alpha}(m)\le M+\lambda e^{\frac{1}{1-r}}+3$.
In particular, $G_{\alpha}(m)$ is finite.
\end{theorem}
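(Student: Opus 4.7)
The plan is to show $[0,\kappa]\subseteq f_{\kappa,m}(C_\alpha^\kappa)$ by chaining together intervals produced by Lemma \ref{lemk1}, using the scaling property of Lemma \ref{lemrk} together with the operation of prepending summands equal to $1-r\in C_\alpha$. First I would estimate $k_*$ in closed form. Since $n_*\ge -\log_r m$ and $r<1$, one has $r^{n_*}\le 1/m$, and combined with the standard inequality $(1+x/m)^m\le e^x$ this gives
\[
k_*\le\lambda\left(1+\frac{r^{n_*}}{1-r}\right)^{m-1}+2\le\lambda\left(1+\frac{1}{m(1-r)}\right)^{m}+2\le\lambda e^{1/(1-r)}+2.
\]
The hypothesis $\kappa\ge M+\lambda e^{1/(1-r)}+2$ then forces $\kappa-M\ge k_*$, so that $j_{\max}:=\lfloor\kappa-M\rfloor\ge k_*$ since $k_*$ is an integer.

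For the main construction, observe that for every $j\in\{0,1,\dots,j_{\max}\}$ we have $\kappa-j\ge M\ge\max\{k_*,\lambda/(1-r)^{m-1}+1\}$, so condition $(A1)$ applies to $\kappa-j$. Lemma \ref{lemk1} then yields $[k_*(1-r)^m,\kappa-j]\subseteq f_{\kappa-j,m}(C_\alpha^{\kappa-j})$, and scaling by $r^m$ via Lemma \ref{lemrk} followed by prepending $j$ summands equal to $1-r\in C_\alpha$ produces
\[
K_j:=\bigl[j(1-r)^m+r^m k_*(1-r)^m,\ j(1-r)^m+r^m(\kappa-j)\bigr]\subseteq f_{\kappa,m}(C_\alpha^\kappa).
\]
I would also take $J:=[k_*(1-r)^m,\kappa]\subseteq f_{\kappa,m}(C_\alpha^\kappa)$ from Lemma \ref{lemk1} applied directly to $\kappa$.

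The main technical step is verifying that $K_0,K_1,\dots,K_{j_{\max}}$ and $J$ form a connected chain whose union equals $[r^m k_*(1-r)^m,\kappa]$. A direct calculation shows that the overlap between $K_j$ and $K_{j+1}$ is equivalent to $\kappa-j\ge(1/r-1)^m+k_*(1-r)^m$, which is ensured by $\kappa-j\ge M\ge(1/r-1)^m+(\lambda e^{1/(1-r)}+2)(1-r)^m$ combined with the bound $k_*\le\lambda e^{1/(1-r)}+2$. The overlap between $K_{j_{\max}}$ and $J$ reduces to $j_{\max}(1-r)^m+r^m(\kappa-j_{\max})\ge k_*(1-r)^m$, which follows from $j_{\max}\ge k_*$; this is precisely where the extra $\lambda e^{1/(1-r)}+2$ summand in the hypothesis on $\kappa$ is spent.

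Finally, with $[r^m k_*(1-r)^m,\kappa]\subseteq f_{\kappa,m}(C_\alpha^\kappa)$ established, I iterate Lemma \ref{lemrk}: for each $l\ge 0$,
\[
r^{lm}\cdot[r^m k_*(1-r)^m,\kappa]=[r^{(l+1)m}k_*(1-r)^m,\ r^{lm}\kappa]\subseteq f_{\kappa,m}(C_\alpha^\kappa).
\]
Consecutive intervals overlap because $\kappa\ge k_*(1-r)^m$, so the union of these shrinking intervals equals $(0,\kappa]$; adjoining $0=0^m+\cdots+0^m\in f_{\kappa,m}(C_\alpha^\kappa)$ yields $[0,\kappa]\subseteq f_{\kappa,m}(C_\alpha^\kappa)$, and the reverse inclusion is immediate since $C_\alpha\subseteq[0,1]$. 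The principal obstacle is the simultaneous bookkeeping of the two distinct chain conditions above, which is exactly what pins down the three terms in the definition of $M$.
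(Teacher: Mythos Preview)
Your proposal is correct and follows essentially the same approach as the paper: estimate $k_*\le\lambda e^{1/(1-r)}+2$, use Lemma~\ref{lemk1} together with the $r^m$-scaling from Lemma~\ref{lemrk} and shifts by $(1-r)^m$ to produce the interval $[r^m k_*(1-r)^m,\kappa]$, and then iterate the scaling to cover $(0,\kappa]$. The only cosmetic difference is that the paper fixes a single $k=\kappa-\bigl(\lfloor\lambda e^{1/(1-r)}\rfloor+2\bigr)\ge M$ and translates the one scaled block $r^m[k_*(1-r)^m,k]$ by $j(1-r)^m$ for $j=0,\dots,k_*$, whereas you let $j$ run up to $j_{\max}=\lfloor\kappa-M\rfloor$ and use the variable-length blocks $r^m[k_*(1-r)^m,\kappa-j]$; the overlap verifications and the role of each term in $M$ are identical in both arguments.
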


\begin{proof}
We have $\left(1+\frac{r^{n_*}} {1-r}\right)^{m-1}\le\left(1+\frac{1}{(1-r)m}\right)^{m-1}\le e^{\frac{1}{1-r}}$ since $n_*\ge -\log_r m$.
So $k_*\le\left\lfloor\lambda e^{\frac{1}{1-r}}\right\rfloor+2$.
It follows that $M\ge\max\left\{\left(\frac{1}{r}-1\right)^m+k_*(1-r)^m,k_*,\frac{\lambda}{(1-r)^{m-1}}+1\right\}$.
Suppose $k\ge M$.
From Lemma \ref{lemrk} and Lemma \ref{lemk1}, we obtain $[k_*(1-r)^mr^m,kr^m]\subseteq f_{k,m}(C_{\alpha}^k)$
and $r^m\left(k-k_*(1-r)^m\right)\ge (1-r)^m$.

Let $\kappa=k+\left\lfloor\lambda e^{\frac{1}{1-r}}\right\rfloor+2\ge k+k_*$.
Since $0,1-r\in C_\alpha$, we derive
\begin{equation*}
  [k_*(1-r)^mr^m,kr^m+k_*(1-r)^m]\subseteq f_{\kappa,m}(C_{\alpha}^{\kappa}).
\end{equation*}
Invoking Lemma \ref{lemk1} again, we obtain $[k_* (1-r)^m,\kappa]\subseteq f_{\kappa,m}(C_{\alpha}^{\kappa})$.
Then we can see that
\begin{align*}
  [\kappa r^m,\kappa]&\subseteq[k_*(1-r)^mr^m,\kappa]\\
  &=[k_*(1-r)^mr^m,kr^m+k_*(1-r)^m]\cup[k_* (1-r)^m,\kappa]\subseteq f_{\kappa,m}(C_{\alpha}^{\kappa}).
\end{align*}
So $f_{\kappa,m}(C_\alpha^{\kappa})\supseteq\bigcup\limits_{l\ge 0}[\kappa r^{(l+1)m},\kappa r^{lm}]=(0,\kappa]$ from Lemma \ref{lemrk}.
Hence, we obtain $f_{\kappa,m}(C_\alpha^{\kappa})=[0,\kappa]$ since $0\in C_\alpha$.
This completes the proof.
\end{proof}

For the case $0<r<\frac{3-\sqrt{5}}{2}<\frac{1}{2}$ (that is, $\frac{1}{r}-1>\frac{1}{1-r}$), we can determine $G_\alpha(m)$ if $m$ is large enough.
We first give several lemmas.
The next lemma gives an upper bound of $\text{Gap}(f_{1,m}([1-r,1-r+r^l]\cap C_\alpha))$.

\begin{lemma}\label{lemgapf1m}
For $l\ge 1$, we have $\text{Gap}(f_{1,m}([1-r,1-r+r^l]\cap C_\alpha))\le \frac{\lambda r^l}{1-r}$, where $\lambda=\frac{1}{r}-2$.
\end{lemma}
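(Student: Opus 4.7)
The plan is to bound the maximum gap size by first reducing to the image under $x \mapsto x^m$ of each individual gap in $A_l := [1-r, 1-r+r^l]\cap C_\alpha$, then identifying which gap dominates. Since $A_l = (1-r) + r^l C_\alpha$ is a translated-scaled copy of $C_\alpha$ and $x \mapsto x^m$ is strictly increasing and continuous on $A_l$, the gaps of $f_{1,m}(A_l)$ correspond bijectively to the gaps of $A_l$ via $(p,q) \mapsto (p^m, q^m)$; it therefore suffices to show $q^m - p^m \le \lambda r^l/(1-r)$ for every gap $(p,q)$ of $A_l$.

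The key reduction is to the \emph{rightmost} gap at each scale. Writing $B := 1-r+r^l$, the rightmost gap at internal level $n \ge 1$ in $A_l$ is
\[
(p_n, q_n) := \bigl(B - (1-r)r^{l+n-1},\ B - r^{l+n}\bigr),
\]
and every other level-$n$ gap of $A_l$ is a leftward translate of this one (all of length $\lambda r^{l+n}$). Since the map $\phi(t) := (t+\ell)^m - t^m$ has derivative $m\bigl((t+\ell)^{m-1} - t^{m-1}\bigr) \ge 0$ for $m \ge 1$, translating a fixed-length gap to the right can only increase $q^m - p^m$. Hence $\text{Gap}(f_{1,m}(A_l)) = \sup_{n \ge 1}(q_n^m - p_n^m)$.

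To bound each $q_n^m - p_n^m$, I would apply Lemma \ref{inequality}(5) with $x_n := r - r^l + r^{l+n}$ and $t_n x_n := r - r^l + (1-r)r^{l+n-1}$, so that $1 - x_n = q_n$ and $1 - t_n x_n = p_n$. A short check confirms $x_n > 0$, $t_n > 1$ (equivalent to $r < 1/2$), and $t_n x_n \le r < 1$ (so $x_n \le 1/t_n$). Lemma \ref{inequality}(5) then gives
\[
q_n^m - p_n^m \;\le\; 1 - \frac{1}{t_n} \;=\; \frac{t_n x_n - x_n}{t_n x_n} \;=\; \frac{\lambda r^{l+n}}{r - r^l + (1-r)r^{l+n-1}}.
\]

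Finally, the inequality $\dfrac{\lambda r^{l+n}}{r - r^l + (1-r)r^{l+n-1}} \le \dfrac{\lambda r^l}{1-r}$ simplifies, upon clearing denominators, to $(1 - r^{l-1})\bigl[r - (1-r)r^n\bigr] \ge 0$, which holds since both factors are nonnegative for $l, n \ge 1$ and $r \in (0, 1/2)$. Taking the supremum over $n$ yields the claim. The main technical hurdle will be making the "rightmost-dominates" reduction fully rigorous --- in particular, verifying that the supremum over all (infinitely many) gaps of $A_l$ is realized on the subfamily $\{(p_n, q_n)\}_{n \ge 1}$ --- after which the subsequent identification of parameters, the application of Lemma \ref{inequality}(5), and the concluding algebraic check are routine.
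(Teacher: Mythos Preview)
Your argument is correct and follows essentially the same route as the paper: both reduce to bounding $q^m - p^m$ for a gap $(p,q)$ of $[1-r,1-r+r^l]\cap C_\alpha$, apply Lemma~\ref{inequality}(5) with $x = 1-q$ and $tx = 1-p$ to obtain $q^m - p^m \le (q-p)/(1-p)$, and then check the resulting expression is maximized (over gaps of a fixed length) at the rightmost gap, yielding the stated bound. The only organizational difference is that you perform the ``rightmost-dominates'' reduction \emph{before} invoking Lemma~\ref{inequality}(5) (via the monotonicity of $t\mapsto (t+\ell)^m - t^m$), whereas the paper applies Lemma~\ref{inequality}(5) to an arbitrary gap first and absorbs the reduction into the inequality $\tfrac{q-p}{1-p}\le \tfrac{(1-2r)r^n}{r-r^l+(1-r)r^n}$; the content is the same, and the step you flag as the ``main technical hurdle'' is in fact routine.
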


\begin{proof}
If $(a,b)\subseteq [1-r,1-r+r^l]\backslash C_\alpha$ with $a,b\in C_{\alpha}$, then we have $b-a=(1-2r)r^n$ and $b\le 1-r+r^l-r^{n+1}$ for some $n\ge l$.
So $a\le 1-r+r^l-r^{n+1}-(1-2r)r^n=1-r+r^l-(1-r)r^n$.
Let $t=\frac{1-a}{1-b}>1$ and $x=1-b\in(0,\frac{1}{t})$.
It follows from Lemma \ref{inequality} that
\begin{align*}
  b^m-a^m&=(1-x)^m-(1-tx)^m\le 1-\frac{1}{t}=1-\frac{1-b}{1-a}=\frac{b-a}{1-a}\\
  &\le\frac{(1-2r)r^n}{r-r^l+(1-r)r^n}\le \frac{(1-2r)r^l}{r-r^l+(1-r)r^l}=\frac{(1-2r)r^l}{r-r^{l+1}}
  =\frac{\lambda r^l}{1-r^l}\le \frac{\lambda r^l}{1-r}.
\end{align*}
This completes the proof.
\end{proof}

To sharp the upper bound of $G_\alpha(m)$ in Theorem \ref{lemupperbound}, we need to optimize Lemma \ref{lemk1} by the following two lemmas.

\begin{lemma}\label{lemk2}
Suppose that integer $k$ satisfies $(A1),(A2)$ and $(A3)$.
If integer $\kappa\ge k+k_*-1$, then we have
\begin{equation*}
[k_*(1-r)^mr^m+(1-r)^m,\kappa]\subseteq f_{\kappa,m}(C_\alpha^{\kappa}).
\end{equation*}
\end{lemma}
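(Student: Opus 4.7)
The plan is to cover $[b,\kappa]$ by a union of overlapping closed subintervals of $f_{\kappa,m}(C_\alpha^\kappa)$, where $b=ar^m+(1-r)^m$. I build the cover from two chains: one originating at the lower end $b$, built via Lemma \ref{lemgap} using a carefully chosen family of ``large'' points in $C_\alpha$; and one reaching the upper end $\kappa$, built by translating the base interval $[a,k]$ supplied by Lemma \ref{lemk1}. The two chains are then shown to meet.

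For the low end, Lemma \ref{lemk1} (under (A1)) gives $[a,k]\subseteq f_{k,m}(C_\alpha^k)$ and Lemma \ref{lemrk} scales this to $[ar^m,kr^m]\subseteq f_{k,m}(C_\alpha^k)$. I then introduce one extra coordinate drawn from $[1-r,1-r+r^{l_0}]\cap C_\alpha$ and set $Y=f_{1,m}([1-r,1-r+r^{l_0}]\cap C_\alpha)$. Condition (A2) is exactly what is needed to guarantee $l_0\ge 1$, so that $[1-r,1-r+r^{l_0}]\subseteq[1-r,1]$. Lemma \ref{lemgapf1m} gives $\text{Gap}(Y)\le \lambda r^{l_0}/(1-r)$, while the definition of $l_0$ rearranges to $(k-a)r^m\ge \lambda r^{l_0}/(1-r)$. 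Hence Lemma \ref{lemgap} applies to the sumset and yields
\[
[b,\,kr^m+(1-r+r^{l_0})^m]=Y+[ar^m,kr^m]\subseteq f_{k+1,m}(C_\alpha^{k+1}).
\]
Adding $j$ extra coordinates at $1-r$ and the remaining $\kappa-k-1-j$ at $0$ translates the interval upward by $j(1-r)^m$ and keeps it inside $f_{\kappa,m}(C_\alpha^\kappa)$. Condition (A3) is precisely the inequality needed for consecutive translates to overlap, so the union over $j=0,1,\ldots,\kappa-k-1$ gives
\[
[b,B_0]\subseteq f_{\kappa,m}(C_\alpha^\kappa),\qquad B_0=kr^m+(1-r+r^{l_0})^m+(\kappa-k-1)(1-r)^m.
\]

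For the upper end I start again from $[a,k]$ and distribute $\kappa-k$ extra coordinates among $\{0,1-r,1\}\subseteq C_\alpha$, producing the family $[a+i_1+i_2(1-r)^m,\,k+i_1+i_2(1-r)^m]\subseteq f_{\kappa,m}(C_\alpha^\kappa)$ for $i_1+i_2\le\kappa-k$; the extremal choice $i_1=\kappa-k$, $i_2=0$ already reaches $\kappa$. I chain this family first in $i_2$, where consecutive intervals differ by a $(1-r)^m$-translation and have length $k-a$, and then in $i_1$, where each integer jump is absorbed by a sufficiently long preceding $i_2$-run. To close the argument I verify that $B_0\ge a$, so that the low-end chain meets the bottom of the upper chain: using (A3) and $\kappa-k-1\ge k_*-2$,
\[
B_0\ge\bigl(2(1-r)^m+ar^m\bigr)+(k_*-2)(1-r)^m=a+ar^m\ge a.
\]

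The main obstacle is verifying the overlap inequalities for the upper chain when $k-a<1$. Chaining in $i_2$ requires $k-a\ge(1-r)^m$; chaining in $i_1$ requires $(\kappa-k-i_1)(1-r)^m\ge 1-(k-a)$ at every $i_1\in\{0,\ldots,\kappa-k-1\}$. I expect both to follow from (A1) (which supplies $k\ge\lambda/(1-r)^{m-1}+1$ and $k\ge k_*\ge 2$) combined with $\kappa-k\ge k_*-1$; the delicate regime is $(1-r)^m$ close to $1$, where the overlap depends sensitively on the balance of the three conditions (A1)--(A3) and the $\kappa\ge k+k_*-1$ budget.
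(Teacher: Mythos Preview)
Your low-end construction is correct and matches the paper: Lemma~\ref{lemk1} gives $[a,k]$, Lemma~\ref{lemrk} scales to $[ar^m,kr^m]$, the choice of $l_0$ together with Lemma~\ref{lemgapf1m} makes Lemma~\ref{lemgap} applicable, and (A3) is exactly the overlap condition for the $(1-r)^m$-translates, yielding $[b,B_0]\subseteq f_{\kappa,m}(C_\alpha^\kappa)$.

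Where your argument diverges from the paper is the upper chain, and this is also where you leave a gap. You build a two-parameter family $[a+i_1+i_2(1-r)^m,\,k+i_1+i_2(1-r)^m]$ using extra coordinates at $0,1-r,1$, and then acknowledge that the overlap conditions $k-a\ge(1-r)^m$ and $k-a\ge 1-(1-r)^m$ are unverified. The paper sidesteps this entirely: since $\kappa\ge k$, the integer $\kappa$ itself satisfies (A1), so Lemma~\ref{lemk1} applied \emph{with $\kappa$ in place of $k$} gives $[a,\kappa]\subseteq f_{\kappa,m}(C_\alpha^\kappa)$ in one stroke. The paper then only needs the first $k_*-2$ of your $(1-r)^m$-translates at the low end (reaching $ar^m+a\ge a$), after which $[a,\kappa]$ takes over. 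No chaining with the point $1\in C_\alpha$ is needed at all.

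For the record, your ``obstacle'' inequalities do in fact follow from $k\ge k_*$ alone. Writing $t=(1-r)^m\in(0,1)$, we have $k-a\ge k_*-k_*t=k_*(1-t)$. Since $k_*\ge\lambda+1=\tfrac{1}{r}-1=\tfrac{1-r}{r}\ge\tfrac{t}{1-t}$ (the last step using $t\le 1-r$ and monotonicity of $s\mapsto s/(1-s)$), we get $k_*(1-t)\ge t$, i.e.\ $k-a\ge(1-r)^m$; and since $k_*\ge 1$, also $k_*(1-t)\ge 1-t$, i.e.\ $k-a\ge 1-(1-r)^m$. So your route can be completed, but it is considerably longer than simply reinvoking Lemma~\ref{lemk1} at level $\kappa$.
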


\begin{proof}
Using Lemma \ref{lemk1}, we obtain $[a,k]\subseteq f_{k,m}(C_\alpha^k)$, where $a=k_*(1-r)^m$.
Since $k$ satisfies $(A2)$, we derive $m+\log_r\left(\frac{1}{\lambda}(1-r)(k-a)\right)\ge 0$, then $l_0=\left\lfloor m+\log_r\left(\frac{1}{\lambda}(1-r)(k-a)\right)\right\rfloor+1\ge 1$.
Combining this with Lemma \ref{lemgapf1m}, we can see that
$\text{Gap}(f_{1,m}([1-r,1-r+r^{l_0}]\cap C_\alpha))\le \frac{\lambda r^{l_0}}{1-r}\le (k-a)r^m$.
From Lemmas \ref{lemgap} and \ref{lemrk}, we obtain
\begin{align*}
  f_{k+1,m}(C_\alpha^{k+1})&\supseteq f_{1,m}([1-r,1-r+r^{l_0}]\cap C_\alpha)+[ar^m,k r^m]\\
   &=[ar^m+(1-r)^m,k r^m+(1-r+r^{l_0})^m].
\end{align*}
Since $k$ satisfies (A3), we have $kr^m+(1-r+r^{l_0})^m\ge 2(1-r)^m+ar^m$.
It follows that
\begin{equation*}
  [ar^m+(1-r)^m,ar^m+2(1-r)^m]\subseteq f_{k+1,m}(C_\alpha^{k+1}).
\end{equation*}
Note that $0,1-r\in C_{\alpha}$.
We can see
\begin{equation}\label{lem3.81}
  [ar^m+(1-r)^m,ar^m+k_*(1-r)^m]\subseteq f_{k+k_*-1,m}(C_\alpha^{k+k_*-1})\subseteq f_{\kappa,m}(C_\alpha^{\kappa}).
\end{equation}
Again, by Lemma \ref{lemk1}, we have
\begin{equation}\label{lem3.82}
  [k_*(1-r)^m,\kappa]\subseteq f_{\kappa,m}(C_\alpha^{\kappa}).
\end{equation}
Combining (\ref{lem3.81}) and (\ref{lem3.82}), we obtain $[k_*(1-r)^mr^m+(1-r)^m,\kappa]\subseteq f_{\kappa,m}(C_\alpha^{\kappa})$.
This completes the proof.
\end{proof}

\begin{lemma}\label{lemk3}
If integer $k$ satisfies $(A1),(A2'),(A3)$ and $(A4)$, then $[(1-r)^m,\kappa]\subseteq f_{\kappa,m}(C_\alpha^{\kappa})$, where $\kappa\in\mathbb{N}$ and $\kappa\ge k+k_*$.
\end{lemma}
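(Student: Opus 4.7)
The plan is to start from the conclusion of Lemma \ref{lemk2} and push the lower endpoint down from $b$ to $(1-r)^m$ by fixing one additional coordinate near $1-r$ and letting the remaining $\kappa-1$ coordinates range over progressively finer scalings. Since $(A2')$ implies $(A2)$, Lemma \ref{lemk2} applies both with $\kappa$ variables and, since $\kappa-1\ge k+k_*-1$, with $\kappa-1$ variables, producing
\[
[b,\kappa]\subseteq f_{\kappa,m}(C_\alpha^\kappa)\quad\text{and}\quad[b,\kappa-1]\subseteq f_{\kappa-1,m}(C_\alpha^{\kappa-1}).
\]
Lemma \ref{lemrk} then gives $[br^{jm},(\kappa-1)r^{jm}]\subseteq f_{\kappa-1,m}(C_\alpha^{\kappa-1})$ for every $j\ge 1$.

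For each $j\ge 1$ I set $l_j=m_0+(j-1)m$ and $X_j=f_{1,m}([1-r,1-r+r^{l_j}]\cap C_\alpha)$. The definition of $m_0$ (which is at least $1$ thanks to $(A2')$) together with Lemma \ref{lemgapf1m} yields
\[
\mathrm{Gap}(X_j)\le\frac{\lambda r^{m_0}}{1-r}\,r^{(j-1)m}\le(k+k_*-1-b)\,r^{jm}\le(\kappa-1-b)\,r^{jm},
\]
so Lemma \ref{lemgap} turns $X_j+[br^{jm},(\kappa-1)r^{jm}]$ into a single closed interval
\[
I_j=\bigl[(1-r)^m+br^{jm},\ (1-r+r^{l_j})^m+(\kappa-1)r^{jm}\bigr]\subseteq f_{\kappa,m}(C_\alpha^\kappa).
\]

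The decisive step is verifying that consecutive $I_j,I_{j+1}$ overlap. Combining the convexity bound $(1-r+h)^m-(1-r)^m\ge m(1-r)^{m-1}h$ with $h=r^{l_{j+1}}$ and the lower estimate $r^{m_0}\ge\tfrac{r^{m+1}(1-r)(k+k_*-1-b)}{\lambda}$ coming from $m_0-1\le m+\log_r\!\bigl(\tfrac{(1-r)(k+k_*-1-b)}{\lambda}\bigr)$, the overlap inequality $U_{j+1}\ge L_j$ (where $L_j,U_j$ are the endpoints of $I_j$) reduces, after factoring out $r^{jm}$ and dividing by $r^m$, to
\[
\frac{mr(1-r)^m(k+k_*-1-b)}{\lambda}+(\kappa-1-a)\ge\Bigl(\tfrac{1}{r}-1\Bigr)^m.
\]
For $\kappa=k+k_*$ this rearranges, via the identity $k+k_*-1-a=(k+k_*-1-b)+(b-a)$, to precisely hypothesis $(A4)$; for larger $\kappa$ it is strictly slack. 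I expect this reduction to $(A4)$ to be the main obstacle, as it is the step that explains why $(A4)$ is the right condition to impose.

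Once the overlaps hold, $\bigcup_{j\ge 1}I_j$ is the half-open interval $\bigl((1-r)^m,U_1\bigr]$ because $L_j=(1-r)^m+br^{jm}$ decreases strictly to $(1-r)^m$. Adjoining the point $(1-r)^m$ itself, which lies in $f_{\kappa,m}(C_\alpha^\kappa)$ via the tuple $(1-r,0,\ldots,0)$, and using $U_1\ge b$ (an immediate consequence of $\kappa-1\ge a$ together with $(1-r+r^{m_0})^m\ge(1-r)^m$), the final union with $[b,\kappa]$ from Lemma \ref{lemk2} delivers $[(1-r)^m,\kappa]\subseteq f_{\kappa,m}(C_\alpha^\kappa)$, which is the desired conclusion.
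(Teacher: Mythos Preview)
Your proof is correct and follows essentially the same approach as the paper's. The only cosmetic difference is that the paper works with the fixed $k'=k+k_*-1$ for the scaled intervals $[br^{m(n+1)},k'r^{m(n+1)}]$, whereas you use $\kappa-1\ge k'$; your gap and overlap estimates still route through the $k+k_*-1$ quantity (via the definition of $m_0$), so the reduction to $(A4)$ is identical, and the extra slack for $\kappa>k+k_*$ is harmless.
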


\begin{proof}
From Lemma \ref{lemk2}, we obtain that $[b,k']\subseteq f_{k',m}(C_\alpha^{k'})$, where $k'=k+k_*-1$, $a=k_*(1-r)^m$ and $b=ar^m+(1-r)^m$.
Since $k$ satisfies $(A2')$, we see that $m+\log_r\left(\frac{1}{\lambda}(1-r)(k'-b)\right)\ge 0$.
Then we derive $m_0=\left\lfloor m+\log_r\left(\frac{1}{\lambda}(1-r)(k'-b)\right)\right\rfloor+1\ge 1$ and $m_0\ge m+\log_r\left(\frac{1}{\lambda}(1-r)(k'-b)\right)$.
It follows from Lemma \ref{lemgapf1m} that
\begin{equation*}
  \text{Gap}(f_{1,m}([1-r,1-r+r^{mn+m_0}]\cap C_\alpha))\le \frac{\lambda r^{mn+m_0}}{1-r}\le (k'-b)r^{mn+m},\ \forall n\ge 0.
\end{equation*}
By Lemma \ref{lemrk}, we have $[br^{m(n+1)},k'r^{m(n+1)}]\subseteq f_{k',m}(C_\alpha^{k'})$ for $n\geq 0$.
Combining this with Lemma \ref{lemgap}, we can see that
\begin{align*}
  f_{k'+1,m}(C_\alpha^{k'+1})&\supseteq f_{1,m}([1-r,1-r+r^{mn+m_0}]\cap C_\alpha)+[b r^{m(n+1)},k'r^{m(n+1)}]\\
  & =[b r^{m(n+1)}+(1-r)^m,k'r^{m(n+1)}+(1-r+r^{mn+m_0})^m].
\end{align*}
Let
\begin{equation*}
  T_n=[b r^{m(n+1)}+(1-r)^m,k'r^{m(n+1)}+(1-r+r^{mn+m_0})^m],\ \forall n\ge 0.
\end{equation*}
Since $k$ satisfies $(A4)$ and $m_0\leq m+\log_r\left(\frac{1}{\lambda}(1-r)(k'-b)\right)+1$, we have
\begin{align*}
  (1-r+r^{mn+m_0})^m-(1-r)^m & \geq m(1-r)^{m-1}r^{mn+m_0}\ge \frac{mr}{\lambda}(1-r)^m(k'-b)r^{m(n+1)} \\
  & \ge br^{mn}-k'r^{m(n+1)}.
\end{align*}
That is, $k'r^{m(n+1)}+(1-r+r^{mn+m_0})^m\ge b r^{mn}+(1-r)^m$.
Then $T_n$ is connected with $T_{n-1}$ for $n\ge 1$.
Hence,
\begin{equation*}
  ((1-r)^m,k'r^m+(1-r+r^{m_0})^m]=\bigcup\limits_{n=0}^{\infty}T_n\subseteq f_{k'+1,m}(C_\alpha^{k'+1}).
\end{equation*}
Again, from Lemma \ref{lemk2}, we obtain that $[b,\kappa]\subseteq f_{\kappa,m}(C_\alpha^{\kappa})$.
Note that
\begin{equation*}
  b=ar^m+(1-r)^m<k'r^{m}+(1-r+r^{m_0})^m.
\end{equation*}
We derive $((1-r)^m,\kappa]\subseteq f_{\kappa,m}(C_\alpha^{\kappa})$.
Therefore, we obtain the conclusion since $(1-r)^m\in f_{\kappa,m}(C_\alpha^{\kappa})$.
\end{proof}

According to the above lemmas, we can determine $G_{\alpha}(m)$ for $1<\alpha<2+\sqrt{5}$ (that is, $\frac{1}{r}-1>\frac{1}{1-r}$).

\begin{theorem}\label{upperbound}
Suppose that $1<\alpha<2+\sqrt{5}$
and $m\ge\frac{\log\left((2-r)(r^{-1}\lambda+1-r)\left(\lambda e^{\frac{1}{1-r}}+2\right)\right)}{\log\left(\left(\frac{1}{r}-1\right)(1-r)\right)}$, where $r=\frac{1}{2}(1-\frac{1}{\alpha})$ and $\lambda=\frac{1}{r}-2$.
The smallest positive integer $\kappa$ that satisfies
\begin{equation*}
  [0,\kappa]=\{x_1^m+x_2^m+\cdots+x_{\kappa}^m: x_j\in C_{\alpha}, 1\le j\le \kappa\}
\end{equation*}
is $\lceil(\frac{1}{r}-1)^m\rceil$.
In other words, we have $G_{\alpha}(m)=\lceil(\frac{1}{r}-1)^m\rceil$ for all sufficiently large $m$.
\end{theorem}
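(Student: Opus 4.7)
The plan is to establish the matching upper bound $G_\alpha(m)\leq\lceil(\frac{1}{r}-1)^m\rceil$; the lower bound follows immediately from Lemma \ref{thmlowerbound} together with the integrality of $G_\alpha(m)$. To this end, I would set $\kappa=\lceil(\frac{1}{r}-1)^m\rceil$ and $k=\kappa-k_*$, so that $\kappa=k+k_*$ matches the exact threshold in Lemma \ref{lemk3}. The strategy is then to verify that $k$ satisfies the four technical conditions (A1), (A2'), (A3), and (A4), apply Lemma \ref{lemk3} to obtain $[(1-r)^m,\kappa]\subseteq f_{\kappa,m}(C_\alpha^\kappa)$, and finally propagate this down to $(0,\kappa]$ via the self-similar scaling of Lemma \ref{lemrk}.

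The hypothesis $1<\alpha<2+\sqrt{5}$ rewrites as $(\frac{1}{r}-1)(1-r)>1$, equivalently $(1-r)^2>r$, and this single inequality drives every asymptotic below. Conditions (A1) and (A2') are routine: (A1) reduces to $\kappa(1-r)^{m-1}\gtrsim\lambda$, i.e., $((\frac{1}{r}-1)(1-r))^m\to\infty$, while (A2') amounts to $\kappa r^m\approx(1-r)^m\to 0\leq\frac{\lambda}{1-r}$. Condition (A4) also simplifies: after expansion, the main terms $\kappa$ and $(\frac{1}{r}-1)^m$ nearly cancel, and the correction $\frac{mr}{\lambda}(1-r)^m\cdot\kappa$ grows like $m\cdot((1-r)^2/r)^m$, which blows up thanks to $(1-r)^2>r$.

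The main obstacle is (A3): $(k-a)r^m+(1-r+r^{l_0})^m\geq 2(1-r)^m$. Since $(k-a)r^m\geq(1-r)^m-2k_*r^m$ by the choice of $\kappa$, what is needed is $(1-r+r^{l_0})^m-(1-r)^m\geq 2k_*r^m$. The convexity estimate $(1-r+r^{l_0})^m-(1-r)^m\geq m(1-r)^{m-1}r^{l_0}$ combined with the asymptotic $l_0\approx m\log_r(1-r)$ (forced by $k-a\approx(\frac{1}{r}-1)^m$) yields $r^{l_0}\approx(1-r)^m$, reducing the inequality to $m\cdot((1-r)^2/r)^m\gtrsim k_*$. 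The explicit lower bound on $m$ in the theorem is calibrated precisely so that this estimate, and the weaker ones required for (A1), (A2'), and (A4), hold simultaneously.

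With (A1)--(A4) verified, Lemma \ref{lemk3} gives $[(1-r)^m,\kappa]\subseteq f_{\kappa,m}(C_\alpha^\kappa)$. Applying Lemma \ref{lemrk} with scaling factor $r^{lm}$ produces $[r^{lm}(1-r)^m,r^{lm}\kappa]\subseteq f_{\kappa,m}(C_\alpha^\kappa)$ for every $l\geq 0$. Consecutive intervals overlap exactly when $r^{(l+1)m}\kappa\geq r^{lm}(1-r)^m$, i.e., when $\kappa\geq(\frac{1}{r}-1)^m$, which is our choice of $\kappa$. Hence $\bigcup_{l\geq 0}[r^{lm}(1-r)^m,r^{lm}\kappa]=(0,\kappa]\subseteq f_{\kappa,m}(C_\alpha^\kappa)$; adjoining $0=f_{\kappa,m}(0,\dots,0)$ yields $[0,\kappa]\subseteq f_{\kappa,m}(C_\alpha^\kappa)$, completing the upper bound $G_\alpha(m)\leq\lceil(\frac{1}{r}-1)^m\rceil$.
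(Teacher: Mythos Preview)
Your proposal is correct and follows essentially the same approach as the paper: set $\kappa=\lceil(\frac{1}{r}-1)^m\rceil$, $k=\kappa-k_*$, verify that $k$ satisfies (A1), (A2'), (A3), (A4), invoke Lemma~\ref{lemk3} to obtain $[(1-r)^m,\kappa]\subseteq f_{\kappa,m}(C_\alpha^\kappa)$, and then use the scaling of Lemma~\ref{lemrk} together with $\kappa r^m\ge(1-r)^m$ to cover $(0,\kappa]$. The paper carries out the verification of (A1)--(A4) with explicit inequalities tied to the stated lower bound on $m$, whereas your sketch is more heuristic, but the overall architecture is identical.
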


\begin{proof}
Let $\kappa=\lceil(\frac{1}{r}-1)^m\rceil\ge(\frac{1}{r}-1)^m$.
If we can prove $[(1-r)^m,\kappa]\subseteq f_{\kappa,m}(C_\alpha^{\kappa})$, then combining Lemma \ref{lemrk} and $\kappa r^m\ge (1-r)^m$, we derive $[0,\kappa]=f_{\kappa,m}(C_{\alpha}^{\kappa})$.
Now we show that $[(1-r)^m,\kappa]\subseteq f_{\kappa,m}(C_\alpha^{\kappa})$.
By Lemma \ref{lemk3}, we only need to check that $k=\kappa-k_*$ satisfies $(A1),(A2'),(A3)$ and $(A4)$.

We have $0<r<\frac{3-\sqrt{5}}{2}$ and $(2-r)(r^{-1}\lambda+1-r)\ge 2$ for $1<\alpha<2+\sqrt{5}$.
Then $m\ge\frac{\log2\left(\lambda e^{\frac{1}{1-r}}+2\right)}{\log\left(\frac{1}{r}-1\right)}$.
Since $(2-r)(r^{-1}\lambda+1-r)(\lambda e^{\frac{1}{1-r}}+2)\ge 2(\lambda e^{\frac{1}{1-r}}+2)\ge\lambda+\lambda e^{\frac{1}{1-r}}+3$, we have
$m\ge\frac{\log\left(\left(\lambda+\lambda e^{\frac{1}{1-r}}+3\right)(1-r)\right)}{\log\left(\left(\frac{1}{r}-1\right)(1-r)\right)}$.
As in the proof of Theorem \ref{lemupperbound},
we obtain $k_*\le\lambda e^{\frac{1}{1-r}}+2$.
Taking $m\ge\frac{\log2\left(\lambda e^{\frac{1}{1-r}}+2\right)}{\log\left(\frac{1}{r}-1\right)}$ into consideration, we derive $(\frac{1}{r}-1)^m\ge 2k_*$.
So $k\ge(\frac{1}{r}-1)^m-k_*\ge k_*$.
Since $m\ge\frac{\log\left(\left(\lambda+\lambda e^{\frac{1}{1-r}}+3\right)(1-r)\right)}{\log\left(\left(\frac{1}{r}-1\right)(1-r)\right)}$, we can see
\begin{equation*}
  \left(\left(\frac{1}{r}-1\right)(1-r)\right)^m\ge(\lambda+k_*+1)(1-r)\ge\lambda(1-r)+(k_*+1)(1-r)^m.
\end{equation*}
It follows that $(\frac{1}{r}-1)^m\ge\frac{\lambda}{(1-r)^{m-1}}+k_*+1$ and  $k\ge(\frac{1}{r}-1)^m-k_*\ge\frac{\lambda}{(1-r)^{m-1}}+1$.
Thus, $(A1)$ is valid.

We have $(1-r)^2<\frac{1}{r}-2$ for $0<r<\frac{3-\sqrt{5}}{2}$, then
\begin{equation*}
  (1-r)^m\le 1-r\le\frac{\lambda}{1-r}\le\frac{\lambda}{1-r}+(1-r)^mr^m+ar^{2m},
\end{equation*}
where $a=k_*(1-r)^m$.
It follows that $(\frac{1}{r}-1)^m\le\frac{\lambda}{(1-r)r^m}+(1-r)^m+ar^m$.
Thus, $(A2')$ is valid since $k\le (\frac{1}{r}-1)^m-k_*+1$.

Since $l_0\le m+\log_r\left(\frac{1}{\lambda}(1-r)(k-a)\right)+1$, we have
\begin{equation*}
  (1-r+r^{l_0})^m-(1-r)^m\ge m(1-r)^{m-1}r^{l_0}\ge\frac{mr}{\lambda}(1-r)^m(k-a)r^m.
\end{equation*}
To prove $(A3)$, it suffices to show that $\frac{mr}{\lambda}(1-r)^m(k-a)\ge k_*+a$ since $kr^m\ge (1-r)^m-k_*r^m$.
Notice that $m\ge\frac{\log\left((2-r)(r^{-1}\lambda+1-r)\left(\lambda e^{\frac{1}{1-r}}+2\right)\right)}{\log\left(\left(\frac{1}{r}-1\right)(1-r)\right)}$.
We obtain
\begin{align*}
  \left(\left(\frac{1}{r}-1\right)(1-r)\right)^m&\ge (2-r)(r^{-1}\lambda+1-r)\left(\lambda e^{\frac{1}{1-r}}+2\right)\\
  &\ge (1+(1-r)^m)\left(\frac{\lambda}{mr}+(1-r)^m\right)k_*\\
  &=(k_*+a)\left(\frac{\lambda}{mr}+(1-r)^m\right).
\end{align*}
That is, $\frac{mr}{\lambda}(1-r)^m\left(\left(\frac{1}{r}-1\right)^m-k_*-a\right)\ge k_*+a$.
Note that $k\ge (\frac{1}{r}-1)^m-k_*$.
Thus, $(A3)$ is valid.

Moreover, we can see that
\begin{equation*}
  \frac{mr}{\lambda}(1-r)^m\left(\frac{1}{r}-1\right)^m\ge k_*+a+\frac{mr}{\lambda}(k_*+a)(1-r)^m\ge 1+a+\frac{mr}{\lambda}(1+b)(1-r)^m.
\end{equation*}
This implies $(A4)$.
The proof is complete.
\end{proof}

We end this section by the following conjecture.

\begin{conjecture}
Suppose that $\alpha>1$ and $r=\frac{1}{2}(1-\frac{1}{\alpha})$.
For any real number $m\ge 1$ and integer $k=\left\lceil\left(\frac{1}{r}-1\right)^m\right\rceil$,
\begin{equation*}
  \left[0,k\right]
  =\left\{x_1^m+x_2^m+\cdots+x_{k}^m: x_j\in C_{\alpha}, 1\le j\le k\right\}.
\end{equation*}
\end{conjecture}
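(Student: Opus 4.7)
The plan is to build on the framework of Theorem \ref{upperbound} and remove both restrictions, extending it to all $\alpha > 1$ and all real $m \geq 1$. Set $k = \lceil(\tfrac{1}{r} - 1)^m\rceil$, so $kr^m \geq (1-r)^m$. By Lemma \ref{lemrk}, once the single containment $[(1-r)^m, k] \subseteq f_{k,m}(C_\alpha^k)$ is established, the scaled images $r^{lm}\cdot[(1-r)^m, k]$ chain together to cover $(0, k]$, and $0 = f_{k,m}(0,\ldots,0) \in f_{k,m}(C_\alpha^k)$ supplies the endpoint. Combined with Lemma \ref{thmlowerbound}, this yields the exact value $G_\alpha(m) = \lceil(\tfrac{1}{r}-1)^m\rceil$. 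So the whole problem reduces to proving the single containment $[(1-r)^m, k] \subseteq f_{k,m}(C_\alpha^k)$.

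In the range $1 < \alpha < 2 + \sqrt{5}$, the case $m = 1$ is Theorem \ref{m=1}, and all sufficiently large $m$ is Theorem \ref{upperbound}. To cover the remaining intermediate $m$, I would sharpen the verifications of conditions $(A1)$--$(A4)$ applied to $k - k_*$. The weakest link is Lemma \ref{lemgapf1m}: the estimate $\text{Gap}\bigl(f_{1,m}([1-r, 1-r+r^l] \cap C_\alpha)\bigr) \leq \tfrac{\lambda r^l}{1-r}$ is loose when $r^l$ is comparable to $1-r$, and can likely be tightened using a second-order expansion of $b^m - a^m$ near $b = 1-r$. Feeding this sharpened gap bound back into Lemmas \ref{lemk2} and \ref{lemk3} should remove the ``sufficiently large $m$'' hypothesis.

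For $\alpha \geq 2+\sqrt{5}$, one has $(\tfrac{1}{r}-1)(1-r) \leq 1$, so $k$ grows no faster than $(1-r)^{-m}$, and the ``chain an extra $k_*$ summands'' trick of Theorem \ref{upperbound} becomes unavailable---indeed $k_*$ may itself exceed $k$. The plan here is to abandon the two-step chaining and argue directly at a single scale: at a judiciously chosen depth $n$, select a family of tuples $(u_1, \ldots, u_k) \in L_n^k$ each satisfying the hypothesis of Lemma \ref{lamconditionp}, such that the resulting intervals $f_{k,m}(I_{u_1} \times \cdots \times I_{u_k}) = \bigl[\sum u_i^m, \sum(u_i+r^n)^m\bigr]$ collectively cover $[(1-r)^m, k]$. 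This reduces the task to a packing question about the level sets of $(u_1, \ldots, u_k) \mapsto \sum u_i^m$ on $L_n^k$.

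The main obstacle is precisely the regime $\alpha \geq 2+\sqrt{5}$. All of the existing tools in Section 3 rely on the length of $[(1-r)^m, k]$ dominating $k_*(1-r)^m$, which fails exactly when $(\tfrac{1}{r}-1)(1-r) \leq 1$. Overcoming this will likely require a genuinely new combinatorial argument controlling how the values $\sum u_i^m$ sweep through $[(1-r)^m, k]$ as $(u_i) \in L_n^k$ varies, possibly combined with a measure-theoretic density argument to avoid explicit case analysis. This appears to be why the authors state the full result only as a conjecture.
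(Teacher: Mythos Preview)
The statement is labeled a \emph{Conjecture} in the paper and is not proved there; the authors establish it only for $m=1$ (Theorem~\ref{m=1}) and, in the restricted range $1<\alpha<2+\sqrt{5}$, for all sufficiently large $m$ (Theorem~\ref{upperbound}). So there is no proof in the paper to compare your proposal against.

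Your proposal is accordingly not a proof but a research plan, and you are candid about this. The reduction you give---that it suffices to prove $[(1-r)^m,k]\subseteq f_{k,m}(C_\alpha^k)$, after which Lemma~\ref{lemrk} and the inequality $kr^m\ge(1-r)^m$ finish the job---is exactly the reduction the paper uses in the proof of Theorem~\ref{upperbound}, so that part is sound. Your diagnosis of the two remaining regimes is also accurate: for $1<\alpha<2+\sqrt{5}$ and intermediate $m$ the issue is that the constants in $(A1)$--$(A4)$ are not sharp enough, while for $\alpha\ge 2+\sqrt{5}$ the entire chaining architecture of Lemmas~\ref{lemk1}--\ref{lemk3} breaks down because $k_*$ can exceed $k$ and $(\tfrac{1}{r}-1)(1-r)\le 1$. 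You correctly flag the latter as the genuine obstacle, and indeed this is precisely why the paper leaves the full statement open. What you have written is a reasonable outline of where the difficulties lie, but neither step---sharpening Lemma~\ref{lemgapf1m} for intermediate $m$, nor the proposed single-scale packing argument for large $\alpha$---is carried out, so the conjecture remains unproved.
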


\section{Sum of $m$-th powers of elements in $C$ and $C_2$}
With the notation of $k_*$, $n_*$, $l_0$ and conditions $(A1),(A2'),(A3),(A4)$ on $k$ in Section 3, we investigate two conjectures of Guo in this section.

For the Cantor ternary set, Guo conjectured that $G(m)=2^m$ for any integer $m\geq 4$.
We now prove this conjecture.

\begin{theorem}\label{2^m}
  For integer $m\ge 4$, we have
  \begin{equation*}
    [0,2^m]=\{x_1^m+x_2^m+\cdots+x_{2^m}^m:x_j\in C,1\le j\le 2^m\}.
  \end{equation*}
\end{theorem}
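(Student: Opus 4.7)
The plan is to specialize Lemma \ref{lemk3} to $\alpha = 3$, where $r = 1/3$, $\lambda = 1/r - 2 = 1$, and $(1/r - 1)^m = 2^m$ is already an integer. Taking $\kappa = 2^m$ and $k = 2^m - k_*$, the lemma will deliver $[(1-r)^m, 2^m] \subseteq f_{2^m, m}(C^{2^m})$ once the four conditions $(A1), (A2'), (A3), (A4)$ are verified. The descent from there to the full interval $[0, 2^m]$ is then automatic: the key numerical identity is $\kappa r^m = 2^m \cdot 3^{-m} = (2/3)^m = (1-r)^m$, so Lemma \ref{lemrk} yields $r^{\ell m}[(1-r)^m, 2^m] \subseteq f_{2^m, m}(C^{2^m})$ for every $\ell \geq 0$, and the union of these scaled intervals together with the point $0 \in C$ covers $[0, 2^m]$ exactly.

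For $m$ above the threshold of Theorem \ref{upperbound} evaluated at $r = 1/3$ (a small explicit numerical constant, roughly $m \geq 13$), that theorem applies verbatim and delivers $G(m) = \lceil 2^m \rceil = 2^m$ with no further input. The work therefore concentrates on the finite list of residual values $4 \leq m \leq 12$. For each such $m$ one computes $n_* = \lfloor \log_3 m \rfloor + 1 \in \{2,3\}$ and $k_* = \lfloor (1 + r^{n_*}/(1-r))^{m-1} \rfloor + 2$; a direct check shows $k_* = 3$ throughout this range, since $(1 + r^{n_*}/(1-r))^{m-1} < 2$ for each such $m$. Conditions $(A1)$ and $(A2')$ then reduce to transparent inequalities in $m$: for example, $(A1)$ becomes $2^m - 3 \geq \max\{3, (3/2)^{m-1} + 1\}$, which is trivial for $m \geq 4$, and $(A2')$ is enormously slack since its right side grows like $3^m$.

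The main obstacle lies in $(A3)$ and $(A4)$, where the concavity estimate $(1 - r + r^{l_0})^m - (1-r)^m \geq m(1-r)^{m-1} r^{l_0}$ used in the proof of Theorem \ref{upperbound} is too crude at $m = 4$ (the margin there is under one percent). For the smallest residual cases I would therefore verify $(A3)$ and $(A4)$ directly from their definitions, computing $l_0$ and $m_0$ explicitly and substituting. At $m = 4$ one finds $a = 48/81$, $k = 13$, $l_0 = 3$, and the inequality $(A3)$ becomes
\[
(k-a)r^m + (1-r+r^{l_0})^m = \tfrac{1005}{6561} + \left(\tfrac{19}{27}\right)^4 \approx 0.3984 > \tfrac{32}{81} \approx 0.3951 = 2(1-r)^m,
\]
which holds, and a parallel direct computation handles $(A4)$. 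The remaining eight residual values $m = 5, \ldots, 12$ succumb to the same pattern of numerical verification; as $m$ grows the margins in $(A3)$ and $(A4)$ widen rapidly, so the checks become progressively easier. Combining the verified conditions via Lemmas \ref{lemk3} and \ref{lemrk} then completes the proof.
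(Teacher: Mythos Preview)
Your approach is essentially the paper's: specialize Lemma \ref{lemk3} at $r=1/3$, set $\kappa = 2^m$, $k = 2^m - k_*$, verify $(A1)$, $(A2')$, $(A3)$, $(A4)$, and close with Lemma \ref{lemrk} via the identity $\kappa r^m = (1-r)^m$. The paper differs only in bookkeeping: instead of invoking Theorem \ref{upperbound} for $m \ge 13$ and doing case-by-case numerics below, it verifies all four conditions uniformly for every $m \ge 4$, using the crude bound $k_* \le \lfloor e^{3/2}\rfloor + 2 \le 6$ and splitting only $(A3)$ into $m \in \{4,5\}$ (direct) versus $m \ge 6$ (concavity estimate with $a < 1$).

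One computational slip you should fix: the claim that $k_* = 3$ for all $4 \le m \le 12$ is false. For $m = 6,7,8$ one has $n_* = 2$, and $(1 + r^{n_*}/(1-r))^{m-1} = (7/6)^{m-1} \approx 2.16,\ 2.52,\ 2.94$ respectively, each with floor $2$, so $k_* = 4$ at those three values. This does not damage your strategy---the numerical checks of $(A1)$ through $(A4)$ go through with $k = 2^m - 4$ just as well, and in fact with wider margins since $a = 4(2/3)^m$ is still well below $1$ for $m \ge 6$---but the stated value must be corrected. The paper avoids this pitfall by never asserting $k_* = 3$ except at $m = 4,5$, where it is used for the tight check of $(A3)$ that you also carry out.
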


\begin{proof}
Let $r=\frac{1}{3}$, $\lambda=1$ and $k=2^m-k_*$.
From Lemma \ref{lemk3}, we only need to check that $k$ satisfies $(A1),(A2'),(A3)$ and $(A4)$.
Similar to the proof of Theorem \ref{upperbound}, $(A2')$ holds for $0<r=\frac{1}{3}<\frac{3-\sqrt{5}}{2}$.

Since $k_*\le e^{\frac{1}{1-r}}+2=e^{\frac{3}{2}}+2<7$, we obtain $k_*\le 6$.
It follows that $k\ge 16-k_*>k_*$.
Moreover, we have $k\ge 2^m-6\ge (\frac{3}{2})^{m-1}+1$ for $m\ge 4$.
Thus, $(A1)$ is valid.

The condition $(A3)$ is equivalent to
\begin{equation}\label{thm4.1}
  (1-r+r^{l_0})^m-(1-r)^m\ge r^m(k_*+a),
\end{equation}
where $a=k_*(1-r)^m$.
\begin{enumerate}
  \item For $m=4,5$, we have $n_*=2$, $k_*=3$, $l_0=3$ and $a\le\frac{16}{27}$, then (\ref{thm4.1}) holds.
  \item For $m\ge 6$, we have $a\le 6(\frac{2}{3})^6<1$.
  Note that $l_0\le m+\log_r\left((1-r)(k-a)\right)+1$.
  We derive
\begin{align*}
  (1-r+r^{l_0})^m-(1-r)^m & \ge m(1-r)^{m-1}r^{l_0}\ge mr(1-r)^m(k-a)r^m \\
  & \ge\frac{m}{3}\left(\frac{2}{3}\right)^m(2^m-7)r^m\ge 7r^m\ge (k_*+a)r^m.
\end{align*}
\end{enumerate}
Thus, $(A3)$ is valid.

The condition $(A4)$ is equivalent to $mr(1-r)^m(2^m-1-b)\ge 1+a$.
For $m\ge 4$, we have $a\le 6(\frac{2}{3})^4<2$ and $b=(1-r)^m+ar^m\le(\frac{2}{3})^4+a(\frac{1}{3})^4<1$.
It follows that
\begin{equation*}
  mr(1-r)^m(2^m-1-b)\ge\frac{4}{3}\left(\frac{2}{3}\right)^m(2^m-2)\ge\frac{4}{3}\left(\frac{2}{3}\right)^4(2^4-2)>3>1+a.
\end{equation*}
Thus, $(A4)$ is valid.
\end{proof}

Using Theorem $\ref{upperbound}$, we can partly confirm Guo's conjecture on $C_2$.

\begin{theorem}
For integer $m\ge 7$, we have
\begin{equation*}
  [0,3^m]=\{x_1^m+x_2^m+\cdots+x_{3^m}^m:x_j\in C_2,j=1,2,\cdots,3^m\}.
\end{equation*}
\end{theorem}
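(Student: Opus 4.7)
The plan is to reduce to Theorem \ref{upperbound}. For $C_2$ we have $\alpha=2$, so $r=\frac{1}{4}$, $\lambda=\frac{1}{r}-2=2$, and $\left(\frac{1}{r}-1\right)^m=3^m$ is already an integer, matching the target number of summands. Since $2<2+\sqrt{5}$, Theorem \ref{upperbound} applies as soon as the threshold
\[
m^{*}=\frac{\log\bigl((2-r)(r^{-1}\lambda+1-r)(\lambda e^{1/(1-r)}+2)\bigr)}{\log\bigl((r^{-1}-1)(1-r)\bigr)}=\frac{\log\bigl(\tfrac{7}{4}\cdot\tfrac{35}{4}\cdot(2e^{4/3}+2)\bigr)}{\log(9/4)}
\]
is checked to be smaller than $7$. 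A short numerical estimate gives $m^{*}\approx 6.15$, so Theorem \ref{upperbound} yields $G_2(m)=3^m$ for every integer $m\ge 7$, and Lemma \ref{thmlowerbound} supplies the matching lower bound $G_2(m)\ge 3^m$.

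If one prefers a self-contained argument that does not quote the threshold, I would mirror the proof of Theorem \ref{2^m}: set $k=3^m-k_{*}$ and verify the four conditions $(A1),(A2'),(A3),(A4)$ with $r=\frac{1}{4}$ and $\lambda=2$. Using the universal bound $k_{*}\le\lfloor\lambda e^{1/(1-r)}\rfloor+2=\lfloor 2e^{4/3}\rfloor+2\le 9$, conditions $(A1)$ and $(A2')$ are routine for $m\ge 7$, since $3^m$ overwhelms the polynomial and geometric quantities on the right. For $(A3)$ and $(A4)$ the strategy is the same as in Theorem \ref{2^m}: combine the upper bound $l_0\le m+\log_r\bigl(\tfrac{1-r}{\lambda}(k-a)\bigr)+1$ with the mean-value inequality $(1-r+r^{l_0})^m-(1-r)^m\ge m(1-r)^{m-1}r^{l_0}$ to reduce both conditions to the single elementary growth statement that $\tfrac{mr}{\lambda}(1-r)^m\cdot 3^m=\tfrac{m}{3}\cdot 2^m$ dominates the bounded quantities $k_{*}+a$ and $1+b$, respectively.

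The main obstacle is the boundary case $m=7$, where the asymptotic estimates carry essentially no slack. There $n_{*}=2$, $k_{*}=5$, $l_0=3$, and one must verify $(A3)$ by direct arithmetic with the explicit values of $a=k_{*}(1-r)^{m}$, $b=ar^m+(1-r)^m$, and $l_0$, rather than by a dominant-term argument; a short numerical check confirms the inequality with a small positive margin. For $m\ge 8$ the factor $\tfrac{m}{3}\cdot 2^m$ grows fast enough that all four conditions hold comfortably. Once $(A1)$--$(A4)$ are in hand, Lemma \ref{lemk3} delivers $[(1-r)^m,3^m]\subseteq f_{3^m,m}(C_2^{3^m})$, and Lemma \ref{lemrk} together with $3^m r^m=(3/4)^m\ge(1-r)^m$ extends the inclusion to all of $[0,3^m]$, completing the proof.
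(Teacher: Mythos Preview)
Your primary approach is exactly the paper's proof: set $r=\tfrac14$, $\lambda=2$, compute the threshold $m^{*}\approx 6.15<7$, and invoke Theorem~\ref{upperbound}. The alternative self-contained verification of $(A1)$--$(A4)$ you sketch is not needed here (and the appeal to Lemma~\ref{thmlowerbound} is redundant, since Theorem~\ref{upperbound} already gives the exact value $G_2(m)=3^m$), but it is a sound backup.
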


\begin{proof}
Let $r=\frac{1}{4}$ and $\lambda=2$.
Then $\frac{\log\left((2-r)(r^{-1}\lambda+1-r)\left(\lambda e^{\frac{1}{1-r}}+2\right)\right)}{\log\left(\left(\frac{1}{r}-1\right)(1-r)\right)}\thickapprox 6.15233<7$.
Therefore, we obtain the conclusion by Theorem \ref{upperbound}.
\end{proof}

\section{Sums of $m$-th powers of two and three  elements in $C$}

We consider firstly sums of two $m$-th powers of elements in $C$ with real number $m>1$.

\begin{theorem}
  For any $\varepsilon\in(0,2)$ and any $m>1$, there is an interval
  \begin{equation*}
    I_{m,\varepsilon}\subseteq (2-\varepsilon,2)\backslash\{x_1^m+x_2^m: x_1,x_2\in C\}.
  \end{equation*}
  In particular, the set $\{x_1^m+x_2^m: x_1,x_2\in C\}$ cannot cover $(2-\varepsilon,2)$.
\end{theorem}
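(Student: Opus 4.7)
The plan is to construct, for each sufficiently large integer $n$, an explicit nonempty open interval $I_n$ that is disjoint from $\{x_1^m+x_2^m:x_1,x_2\in C\}$ and whose left endpoint tends to $2$. Given $\varepsilon$, choosing $n$ large enough will then place $I_n\subseteq(2-\varepsilon,2)$ and supply the required $I_{m,\varepsilon}$.

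Fix $n\ge 1$ and set $\eta=3^{-n}$. Two structural facts about $C$ will drive the analysis: by the symmetry $C=1-C$, the level-$n$ piece near $1$ is $C\cap[1-\eta,1]=\{1-\eta y:y\in C\}$; and $C$ has gaps $(1/3,2/3)$ and $(1-2\eta,1-\eta)$. I would split pairs $(x_1,x_2)\in C^2$ (up to swapping the two coordinates) into case $(B)$: some $x_i\le 1-2\eta$, whence $s:=x_1^m+x_2^m\le 1+(1-2\eta)^m$; and case $(A)$: both $x_i\ge 1-\eta$, in which case $x_i=1-\eta y_i$ with $y_i\in C$. Using the gap $(1/3,2/3)$ of $C$ and monotonicity of $t\mapsto(1-\eta t)^m$, case $(A)$ will subdivide: if both $y_i\in[0,1/3]$ then $s\ge 2(1-\eta/3)^m$; if both $y_i\in[2/3,1]$ then $s\le 2(1-2\eta/3)^m$; and if the $y_i$ lie on opposite sides of the gap then $s\ge(1-\eta/3)^m+(1-\eta)^m$.

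The candidate gap is
\[
I_n=\bigl(2(1-2\eta/3)^m,\ (1-\eta/3)^m+(1-\eta)^m\bigr).
\]
By the four bounds above, $I_n$ will be disjoint from $\{x_1^m+x_2^m:x_1,x_2\in C\}$ provided (i) $1+(1-2\eta)^m<2(1-2\eta/3)^m$, (ii) $(1-\eta/3)^m+(1-\eta)^m<2(1-\eta/3)^m$, and (iii) $2(1-2\eta/3)^m<(1-\eta/3)^m+(1-\eta)^m$. Inequality (ii) is trivial since $1-\eta<1-\eta/3$. For (i), the difference vanishes at $\eta=0$ but its derivative there equals $2m/3>0$, so it holds for all small $\eta>0$. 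The delicate point will be (iii): the difference also vanishes at $\eta=0$ and, as a direct Taylor computation will show, its first derivative at $0$ vanishes too; the second derivative at $0$ works out to $2m(m-1)/9$, which is positive since $m>1$, so (iii) still holds for all sufficiently small $\eta>0$.

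Finally, since $2(1-2\eta/3)^m\to 2$ as $n\to\infty$, after taking $n$ large enough both to secure (i)--(iii) and to force the lower endpoint of $I_n$ above $2-\varepsilon$, the interval $I_n$ will serve as $I_{m,\varepsilon}$. The main obstacle I expect is the double-vanishing in (iii): since both the value and the first derivative are zero at $\eta=0$, the gap has width only of order $\eta^2$, and opening the gap requires tracking the Taylor expansion to second order, which is precisely where the assumption $m>1$ becomes essential.
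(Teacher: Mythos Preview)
Your argument is correct and follows essentially the same route as the paper: both isolate the very same gap interval $\bigl(2(1-2\eta/3)^m,\ (1-\eta/3)^m+(1-\eta)^m\bigr)$ (after an index shift in $n$) via the same case split on the two rightmost level-$(n{+}1)$ pieces of $C$, and both handle your condition (i) by a first-order expansion. The one place you work harder than needed is (iii): since $\tfrac12\bigl((1-\eta)+(1-\eta/3)\bigr)=1-2\eta/3$, strict convexity of $t\mapsto t^m$ for $m>1$ (Jensen's inequality) gives (iii) outright for every $\eta>0$, which is exactly how the paper argues it, so the ``double-vanishing'' you flag is no obstacle at all.
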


\begin{proof}
For $n\geq 2$, let $I_{u_1}, I_{u_2}$ and $I_{u_3}$ be the last three intervals in $\mathcal{F}_n$ and $u_1<u_2<u_3$.
Then we have $u_1=1-2r^{n-1}-r^n, u_2=1-r^{n-1}$ and $u_3=1-r^n$.
Moreover,
\begin{align*}
f_{2,m}(I_{u_{1}}\times I_{u_3})=[(1-2r^{n-1}-r^n)^m+(1-r^{n})^m,(1-2r^{n-1})^m+1]\subseteq f_{2,m}(F_{n}^{2}),\\
f_{2,m}(I_{u_2}\times I_{u_2})=[2(1-r^{n-1})^m,2(1-2r^n)^m]\subseteq f_{2,m}(F_{n}^{2}),\\
f_{2,m}(I_{u_2}\times I_{u_3})=[(1-r^{n-1})^{m}+(1-r^{n})^m,(1-2r^n)^m+1]\subseteq f_{2,m}(F_{n}^{2}).
\end{align*}
For $u,v\in L_n$ and $u\leq v$, let $L_{u,v}$ and $R_{u,v}$ be the left endpoint and the right endpoint of $f_{2,m}(I_u\times I_v)$ respectively.
Note that $(t^m)''=m(m-1)t^{m-2}>0$ for $t\in(0,\infty)$.
For $t_1,t_2\in(0,+\infty)$ and $t_1\neq t_2$, we derive $(t_1)^m+(t_2)^m>2(\frac{t_1+t_2}{2})^m$ from Jensen inequality.
Then we have
\begin{equation*}
  L_{u_2,u_3}=(1-r^{n-1})^m+(1-r^n)^m>2(1-2r^n)^m=R_{u_2,u_2},\ \forall n\ge 1.
\end{equation*}
Since $(1+t)^m=1+mt+O(t^2)$ as $t\rightarrow 0$, we have
\begin{align*} R_{u_1,u_3}-R_{u_2,u_2}&=1+(1-2r^{n-1})^m-2(1-2r^n)^m=1+(1-6r^n)^m-2(1-2r^n)^m\\
&=1+(1-6mr^n)-2(1-2mr^n)+O(r^{2n})=-2mr^n+O(r^{2n})
\end{align*}
as $n\rightarrow\infty$.
Then there is an $N>0$, such that for each $n>N$, $R_{u_1,u_3}<R_{u_2,u_2}<L_{u_2,u_3}$.
Note that for $n>N$, we have $R_{u,u_3}\leq R_{u_1,u_3}<L_{u_2,u_3}$ for $u\leq u_1$, $R_{u,v}\leq R_{u_2,u_2}<L_{u_2,u_3}$ for $v\leq u_2$ and $L_{u_3,u_3}>L_{u_2,u_3}$.
It follows that $(R_{u_2,u_2},L_{u_2,u_3})\cap f_{2,m}(I\times I')=\emptyset$ for any $n>N$ and any $I,I'\in\mathcal{F}_n$.
So $(R_{u_2,u_2},L_{u_2,u_3})\subseteq [0,2]\setminus\left(f_{2,m}(F_n^2)\right)$ for $n>N$.
By Lemma \ref{fkmCk}, we have $f_{2,m}(C^{2})=\bigcap\limits_{n=1}^{\infty} f_{2,m}(F_n^{2})$.
So
\begin{equation*}
  \left(2(1-2r^n)^m,(1-r^{n-1})^m+(1-r^n)^m\right)\subseteq [0,2]\backslash f_{2,m}(C^2),\  \forall n>N.
\end{equation*}
For any $\varepsilon\in(0,2)$, there is an integer $N'>N$, such that $2(1-2r^{N'})^m>2-\varepsilon$ since $\lim\limits_{n\rightarrow \infty}2(1-2r^n)^m=2$.
Let $I_{m,\varepsilon}=\left(2(1-2r^{N'})^m,(1-r^{N'-1})^m+(1-r^{N'})^m\right)$.
Then $I_{m,\varepsilon}\subseteq(2-\varepsilon,2)\backslash f_{2,m}(C^2)$.
Consequently, the set $\{x_1^m+x_2^m: x_1,x_2\in C\}$ cannot cover $(2-\varepsilon,2)$.
\end{proof}

We next consider sums of $m$-th powers of three elements in $C$ with $m\ge 1$.
Choosing appropriate $n$ in Lemma \ref{lamconditionp} for the case $k=3$, we can obtain the following result.

\begin{theorem}\label{thm33}
For any $m\geq 1$, there is an $\epsilon\ge\frac{1}{2}$, such that
\begin{equation*}
  [3-\epsilon,3]\subseteq \{x_1^m+x_2^m+x_3^m:x_1,x_2,x_3\in C\}.
\end{equation*}
\end{theorem}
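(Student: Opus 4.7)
The plan is to apply \cref{lamconditionp} with $k=3$ to three copies of the rightmost level-$n$ interval $I_{1-r^n} = [1-r^n,1]$ in $\mathcal{F}_n$, with $r=1/3$ and $\lambda = 1$, and with $n = n(m)$ to be chosen. Taking $u_1 = u_2 = u_3 = 1-r^n \in L_n$, the hypothesis of the lemma reduces to
\[
2(1-r^n)^{m-1} \ge (1-r^n+r^n)^{m-1} = 1,
\]
that is, $(1-r^n)^{m-1} \ge 1/2$, and the conclusion then yields $[3(1-r^n)^m,\,3] \subseteq f_{3,m}(C^3)$. Thus $\epsilon = 3\bigl(1-(1-r^n)^m\bigr) \ge 1/2$ will follow provided $(1-r^n)^m \le 5/6$.

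The case $m=1$ is immediate from Steinhaus's theorem $C+C=[0,2]$ (so $C+C+C=[0,3]$), so assume $m>1$. Let $n = n(m)$ be the smallest positive integer with $(1-r^n)^{m-1} \ge 1/2$. If $n=1$, then $(1-r)^m = (2/3)^m \le 2/3 < 5/6$, and we are done. If $n\ge 2$, the minimality of $n$ gives $(1-r^{n-1})^{m-1}<1/2$, so $r^{n-1} > 1-2^{-1/(m-1)}$ and therefore $1-r^n < (2+2^{-1/(m-1)})/3$. It then suffices to prove
\[
\left(\frac{2+2^{-1/(m-1)}}{3}\right)^{m} \le \frac{5}{6}.
\]
Taking logarithms and applying $-\log(1-x)\ge x$ with $x = (1-2^{-1/(m-1)})/3$, this inequality is implied by $m\bigl(1-2^{-1/(m-1)}\bigr)/3 \ge \log(6/5)$, or equivalently, upon setting $s = 1/(m-1)>0$, by $(s+1)(1-2^{-s}) \ge 3s\log(6/5)$.

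I expect the main obstacle to be the verification of this last inequality uniformly in $s>0$. My plan is to establish instead the cleaner and stronger estimate $(s+1)(1-2^{-s}) \ge s\log 2$, and combine it with the elementary arithmetic fact $\log 2 > 3\log(6/5)$ (equivalently $2^{1/3}>6/5$, i.e.\ $250>216$). The stronger estimate is obtained by setting $\psi(s) = (s+1)(1-2^{-s}) - s\log 2$, observing $\psi(0)=\psi'(0)=0$, and using
\[
\psi''(s) = 2^{-s}\log 2\,\bigl(2-(s+1)\log 2\bigr)
\]
together with $\lim_{s\to\infty}\psi'(s) = 1-\log 2 > 0$ to conclude $\psi'(s) > 0$ for all $s>0$. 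The structural reason the whole argument succeeds is the numerical inequality $\log 2/\log(6/5) > 1/r = 3$: it ensures that the admissible window of values $(1-r^n)^{m-1}\ge 1/2$ and $(1-r^n)^m\le 5/6$, expressed in the multiplicative variable $r^n$, is wider than one factor of $r$, so that some positive integer $n$ always falls inside it and the single choice $u_1=u_2=u_3 = 1-r^n$ indeed works for every $m>1$.
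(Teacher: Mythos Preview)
Your proof is correct and follows essentially the same approach as the paper: apply \cref{lamconditionp} with $u_1=u_2=u_3=1-r^n$ for a suitable $n$, reducing the claim $\epsilon\ge 1/2$ to the estimate $\bigl((2+2^{-1/\mu})/3\bigr)^{\nu}\le 5/6$ for appropriate $\mu,\nu$ tied to $m$, and then handle this by a one-variable calculus argument. The only differences are cosmetic: the paper selects $n$ via the slightly stronger condition $(1-r^n)^m\ge 1/2$ (so it need not treat $m=1$ separately) and verifies the final inequality by showing that $h(x)=\frac{1}{x}\log\bigl(1-\tfrac{1-e^{-x}}{3}\bigr)$ is nondecreasing, whereas you prove the equivalent estimate $(s+1)(1-2^{-s})\ge s\log 2$ directly.
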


\begin{proof}
Let $k=3$ and $r=\frac{1}{3}$.
Then $(k-1)(1-r^n)^{m-1}=2\left(1-(\frac{1}{3})^n\right)^{m-1}$ and $\lambda=\frac{1}{r}-2=1$.
If $n=\left\lfloor-\log_3\left(1-2^{-\frac{1}{m}}\right)\right\rfloor+1>-\log_3\left(1-2^{-\frac{1}{m}}\right)$,
then $2\left(1-(\frac{1}{3})^n\right)^{m-1}\ge 2\left(1-(\frac{1}{3})^n\right)^m\ge 1$.
Using Lemma \ref{lamconditionp}, we see that $[3-\epsilon,3]\subseteq f_{3,m}(C^3)$, where $\epsilon=3-3\left(1-(\frac{1}{3})^n\right)^m$.

Since $n\le-\log_3(1-2^{-\frac{1}{m}})+1$, we have $\left(1-(\frac{1}{3})^n\right)^m\le\left(1-\frac{1-2^{-\frac{1}{m}}}{3}\right)^m$.
Let $h(x)=\frac{1}{x}\log\left(1-\frac{1-e^{-x}}{3}\right)$ for $x>0$.
Then its derivative
\begin{equation*}
  h'(x)=-\frac{1}{x^2}\left(\log\left(1-\frac{1-e^{-x}}{3}\right)+\frac{x}{1+2e^x}\right),\ x>0.
\end{equation*}
Note that $\log(1+x)\leq x$ for $x>-1$.
We have $-\log(1-x)\geq x$ for $x\in(0,1)$.
So
\begin{equation*}
  h'(x)\ge\frac{1}{x^2}\left(\frac{1-e^{-x}}{3}-\frac{x}{1+2e^x}\right)
  =\frac{(1-e^{-x})(1+2e^x)-3x}{3x^2(1+2e^x)},\ x>0.
\end{equation*}
It is easy to see that $(1-e^{-x})(1+2e^x)=(e^x-e^{-x})+e^x-1\ge 2x+x=3x$ for $x>0$.
Then $h'(x)\ge 0$ and $h(x)$ is non-decreasing for $x>0$.
Thus, we derive $\left(1-\frac{1-2^{-\frac{1}{m}}}{3}\right)^m=2^{h\left(\frac{\log 2}{m}\right)}\leq 2^{h(\log 2)}=\frac{5}{6}$ for $m\ge 1$.
Therefore, we obtain $\epsilon=3-3\left(1-\left(\frac{1}{3}\right)^n\right)^m\geq 3-3\left(1-\frac{1-2^{-\frac{1}{m}}}{3}\right)^m \ge\frac{1}{2}$.
\end{proof}

\section{Cantor dust}

Let $r=\frac{1}{3}$ and $m\geq 3$ be an integer throughout this section.
Let $W=\{x+iy: x,y\in C\}=C+iC$ be the naturally embedded image of the Cantor dust $C\times C$ into the complex plane $\mathbb{C}$.
Hence, $W\cong C\times C$.
Let $S=\{x+iy:-1\leq x,y\leq 1\}=[-1,1]+i[-1,1]$ be a closed square centered at the origin.

Our main results in this section are stated as follows.

\begin{theorem}
  For each integer $m\geq 3$ and $k\geq 2^m$, we have
  \begin{equation*}
    \frac{k}{100}\left(\frac{2}{3}\right)^{m}S\subseteq\{z_1^m+z_2^m+\cdots+z_{4k}^m:
    z_j\in W,1\leq j\leq 4k\}.
  \end{equation*}
\end{theorem}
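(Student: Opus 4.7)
The plan is to express any $w$ in the target square as a nonnegative real combination of $\omega_j^m$ for four carefully chosen anchors $\omega_0,\omega_1,\omega_2,\omega_3\in W$, then realize each coefficient as a sum of $k$ $m$-th powers from $C$ via Theorem 4.1. I restrict the anchors to the form $\omega_j=r^{\alpha_j}+ir^{\beta_j}$ with $\alpha_j,\beta_j\in\mathbb{N}\cup\{\infty\}$, $\min(\alpha_j,\beta_j)=0$, and $r=1/3$ (using the convention $r^{\infty}:=0$). This restricted shape guarantees (i) $\rho_j:=|\omega_j^m|\ge 1$ (since $|\omega_j|\ge 1$), and (ii) for any $s\in C$ the product $s\omega_j=sr^{\alpha_j}+isr^{\beta_j}$ lies in $W$, because the self-similarity $r^{\ell}C\subseteq C$ forces both coordinates into $C$. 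I would always take $\omega_0=1$ and pick the remaining three anchors from $\{1+ir^{\beta}:\beta\ge 0\}\cup\{r^{\alpha}+i:\alpha\ge 0\}\cup\{i\}$ so that the arguments $\phi_j\equiv m\arg(\omega_j)\pmod{2\pi}$ are angularly well-spread around the circle. Since $\arg(1+ir^{\beta})=\arctan(r^{\beta})\in(0,\pi/4]$ and $\arg(r^{\alpha}+i)=\pi/2-\arctan(r^{\alpha})\in[\pi/4,\pi/2)$ take many values as $\alpha,\beta$ range, multiplication by $m$ modulo $2\pi$ covers the circle well enough that four suitable anchors exist---trivially for $m\ge 8$, and by a short direct inspection for $m\in\{3,4,5,6,7\}$.

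Given such anchors, the four vectors $\omega_j^m$ positively span $\mathbb{C}$, and a standard geometric argument produces an absolute constant $K$ for which every $w\in\frac{k}{100}(2/3)^m S$ admits a nonnegative decomposition $w=\sum_{j=0}^{3}c_j\omega_j^m$ with $c_j\le K|w|/\rho_j$. Since $|w|\le\frac{k\sqrt{2}}{100}(2/3)^m$ and $\rho_j\ge 1$, this forces
\[
c_j\le\frac{K\sqrt{2}}{100}k\left(\frac{2}{3}\right)^m\le k,
\]
the factor $1/100$ in the statement being exactly what absorbs $K$ with room to spare. Next, since $k\ge 2^m$, the real-line result---Theorem 4.1 for $m\ge 4$, Guo's proof of the $m=3$ case, and padding by $0\in C$---yields $[0,k]\subseteq f_{k,m}(C^k)$, so for each $j$ we may write $c_j=s_{j,1}^m+\cdots+s_{j,k}^m$ with $s_{j,i}\in C$. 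By property (ii) each $z_{j,i}:=s_{j,i}\omega_j$ lies in $W$, and
\[
\sum_{i=1}^{k}z_{j,i}^m=\omega_j^m\sum_{i=1}^{k}s_{j,i}^m=c_j\omega_j^m,
\]
so summing over $j\in\{0,1,2,3\}$ expresses $w$ as a sum of $4k$ $m$-th powers of elements of $W$.

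The main obstacle is the anchor construction: verifying uniformly in $m\ge 3$ that the restricted family $\{r^{\alpha}+ir^{\beta}:\min(\alpha,\beta)=0\}$ contains four elements whose $m$-th powers are angularly well-spread, and controlling the geometric constant $K$ so that the pre-factor $1/100$ suffices. For large $m$, near-equidistribution of $\{m\arctan(r^{\beta})\pmod{2\pi}:\beta\in\mathbb{N}\}$ makes this automatic, while the small cases $m\in\{3,4,5,6,7\}$ require only a brief explicit check (for instance, when $m=3$ the anchors $1$, $1+i$, $\tfrac{1}{3}+i$, $i$ give $m$-th powers at arguments $0,\,135^\circ,\,\approx\!214.7^\circ,\,270^\circ$, which do positively span $\mathbb{C}$).
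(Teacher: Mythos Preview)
Your core architecture matches the paper: pick anchors $\omega\in W$ with the property that $s\omega\in W$ for all $s\in C$, so that each anchor contributes a ray $\omega^m[0,k]\subseteq f_{k,m}(W^k)$ via the real-line theorem, then Minkowski-sum a few such rays to fill a square. The paper's Lemma~6.4 is exactly your property~(ii), and its Lemmas~6.6--6.9 carry out the same ``four rays suffice'' philosophy.

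The genuine gap is in the quantitative step you yourself flag as the obstacle. Your claim that $\{m\arctan(r^{\beta})\pmod{2\pi}:\beta\in\mathbb N\}$ is ``near-equidistributed'' for large $m$ is false: since $\arctan(r^\beta)$ decays geometrically, only $O(\log m)$ of the terms $m\arctan(r^\beta)$ exceed $2\pi$, and the rest accumulate at $0$. What one actually has is the ratio bound $\theta_n/\theta_{n+1}\in(1,3)$ (the paper's Lemma~6.3), which only guarantees that consecutive angles do not jump by more than a factor of three---not that they avoid any particular neighborhood of the bad directions. Concretely, for $m\equiv 2\pmod 4$ one can have $m\theta_{n_1}$ just above $\pi$ and $m\theta_{n_1-1}$ just below $2\pi$, so every argument coming from your restricted family $\{r^\alpha+ir^\beta:\min(\alpha,\beta)=0\}$ lands near the real axis and your spanning constant $K$ is unbounded. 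The paper handles exactly these degenerate sub-cases (Lemma~6.8 case~3, Lemma~6.9 cases~1(a) and~2) by reaching for anchors such as $\tfrac{2}{3}+ir^{n}$ and $1+2ir^{n}$ that lie \emph{outside} your restricted family, together with the symmetry $i^m\,\overline{f_{k,m}(W^k)}=f_{k,m}(W^k)$ (Lemma~6.5) and a case split on $m\pmod 4$. Without those additional anchors and the case analysis, the uniform constant you need simply is not available from the set you allow yourself.
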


Using the above theorem, we obtain the following result.

\begin{theorem}
   For each integer $m\geq 3$, there is a positive integer $k\leq 2^{m+8}$, such that
  \begin{equation*}
    \{z\in\mathbb{C}: |z|\leq 1\}\subseteq\{z_1^m+z_2^m+\cdots+z_{k}^m:z_j\in W,1\leq j\leq k\}.
  \end{equation*}
\end{theorem}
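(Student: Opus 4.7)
The plan is to deduce Theorem 6.2 as a short corollary of Theorem 6.1 by just choosing $k$ large enough to rescale the square $\frac{k}{100}(2/3)^m S$ so that it contains the closed unit disk, and then checking the resulting $k$ fits under the budget $2^{m+8}$.

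First I would record the elementary geometric observation that the closed unit disk $\{z\in\mathbb{C}:|z|\le 1\}$ is inscribed in the square $S=[-1,1]+i[-1,1]$, and that for any constant $c\ge 1$ the dilation $cS$ still contains the unit disk. Hence, in order to apply Theorem 6.1 it is enough to pick an integer $k\ge 2^m$ with
\[
\frac{k}{100}\left(\frac{2}{3}\right)^{m}\ge 1,
\qquad\text{i.e.,}\qquad k\ge 100\left(\frac{3}{2}\right)^{m}.
\]

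Next I would set $k_{0}=\max\bigl\{2^{m},\,\lceil 100\,(3/2)^{m}\rceil\bigr\}$. By construction $k_{0}\ge 2^{m}$, so Theorem 6.1 gives
\[
\{z\in\mathbb{C}:|z|\le 1\}\ \subseteq\ S\ \subseteq\ \frac{k_{0}}{100}\left(\frac{2}{3}\right)^{m}S\ \subseteq\ \bigl\{z_{1}^{m}+\cdots+z_{4k_{0}}^{m}:z_{j}\in W\bigr\}.
\]
Setting $k=4k_{0}$ then yields the desired representation; it remains only to verify $k\le 2^{m+8}$, i.e., $k_{0}\le 2^{m+6}=64\cdot 2^{m}$.

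Finally I would check this bound by splitting into two cases according to which term in the maximum is active. In the regime $(4/3)^{m}\ge 100$ we have $k_{0}=2^{m}\le 64\cdot 2^{m}$ trivially. In the opposite regime $k_{0}\le 100(3/2)^{m}+1$, and the inequality $100(3/2)^{m}+1\le 64\cdot 2^{m}$ reduces to $(4/3)^{m}\gtrsim 100/64=1.5625$, which holds already for $m\ge 3$ since $(4/3)^{3}=64/27>1.5625$ with room to spare. There is no genuine obstacle here; the only step that requires any thought is the second case of this arithmetic check, where the constants must be balanced to confirm that the factor $100(3/2)^{m}$ from Theorem 6.1 does not exceed the allowed $2^{m+6}$ once $m\ge 3$.
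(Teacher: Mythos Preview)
Your argument is correct and follows the same approach as the paper: apply Theorem 6.1 with a value of $k$ large enough that $\frac{k}{100}(2/3)^{m}\ge 1$, then take the representation with $4k$ terms. The paper simply fixes $k'=2^{m+6}$ from the outset and checks $\frac{k'}{100}(2/3)^m>1$ for $m\ge 3$, whereas you optimize $k_0$ first and then bound it by $2^{m+6}$; this is a cosmetic difference only.
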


\begin{proof}
Let $k=2^{m+8}$ and $k'=\frac{k}{4}=2^{m+6}$.
Since $m\ge 3$, we have $k'>2^m$ and $\frac{k'}{100}(\frac{2}{3})^{m}>1$.
It follows from Theorem 6.1 that $\{z\in\mathbb{C}: |z|\leq 1\}\subseteq S\subseteq\frac{k'}{100}(\frac{2}{3})^{m}S\subseteq f_{4k',m}(W^{4k'})=f_{k,m}(W^k)$.
\end{proof}

For convenience, we denote $\arctan r^n$ by $\theta_n$ for $n\in\mathbb{Z}$.
Then $\theta_{-n}=\arctan r^{-n}=\frac{\pi}{2}-\arctan r^n=\frac{\pi}{2}-\theta_n$.
From Lemma \ref{inequality} we can obtain the following inequality between $\theta_n$ and $\theta_{n+1}$ for $n\ge -1$.

\begin{lemma}\label{lem3beijiao}
  For integer $n\geq -1$, we have $\frac{1}{3}\theta_n\le\theta_{n+1}<\theta_n\leq 3\theta_{n+1}-r^{3n+3}$.
\end{lemma}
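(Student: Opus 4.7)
The plan is to derive the entire chain of inequalities by a direct substitution into Lemma \ref{inequality}, parts (1) and (2), with $x=r^{n+1}$.

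First I would check that $x=r^{n+1}$ lies in the range where Lemma \ref{inequality}(2) applies. Since $r=\tfrac13$ and $n\ge -1$, we have $x=r^{n+1}\in(0,1]$, with $x=1$ exactly when $n=-1$ and $x\le \tfrac13$ when $n\ge 0$. Note also that $3x=r^n$, so $\arctan(3x)=\theta_n$ and $\arctan x=\theta_{n+1}$.

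For the left-hand inequality $\tfrac{1}{3}\theta_n\le\theta_{n+1}$: I would invoke Lemma \ref{inequality}(1) with $t=3$, which gives $\arctan(3x)\le 3\arctan x$, i.e.\ $\theta_n\le 3\theta_{n+1}$. The middle inequality $\theta_{n+1}<\theta_n$ is immediate from strict monotonicity of $\arctan$ together with $r^{n+1}<r^n$. For the right-hand inequality $\theta_n\le 3\theta_{n+1}-r^{3n+3}$: I would apply the last inequality in Lemma \ref{inequality}(2), namely $\arctan(3x)\le 3\arctan x-x^3$ for $x\in[0,1]$; substituting $x=r^{n+1}$ yields $\theta_n\le 3\theta_{n+1}-r^{3(n+1)}=3\theta_{n+1}-r^{3n+3}$.

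There is essentially no obstacle: the lemma is a plug-in corollary of the previously proved inequalities once one notices the identity $3\cdot r^{n+1}=r^n$. The only mild subtlety is verifying that $n=-1$ is admissible, which is why the hypothesis is $n\ge -1$ rather than $n\ge 0$: at $n=-1$ one has $x=1$, which is the boundary case where Lemma \ref{inequality}(2) was stated with a closed interval $[0,1]$ and the cubic correction $\psi_1(1)=\tfrac{3\pi}{4}-\arctan 3-1>0$ was explicitly verified there.
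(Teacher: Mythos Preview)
Your proof is correct and essentially identical to the paper's: both substitute $x=r^{n+1}\in(0,1]$ into the inequality $\arctan(3x)\le 3\arctan x - x^3$ from Lemma~\ref{inequality}(2) to obtain the rightmost inequality, and both note the middle strict inequality is immediate from monotonicity. The only cosmetic difference is that the paper deduces $\tfrac13\theta_n\le\theta_{n+1}$ directly from the already-proved rightmost inequality (since $r^{3n+3}>0$), whereas you cite Lemma~\ref{inequality}(1) separately; either route is fine.
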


\begin{proof}
It is obvious that $\theta_{n+1}=\arctan(r^{n+1})<\arctan r^n=\theta_n$.
From Lemma \ref{inequality}, we see that $\arctan(3x) \leq 3\arctan x-x^3$ for $x\in[0,1]$, then
   \begin{equation*}
     \theta_n=\arctan (3r^{n+1}) \leq  3\arctan r^{n+1}-r^{3n+3}=3\theta_{n+1}-r^{3n+3},\  n\geq -1.
   \end{equation*}
This also implies $\frac{1}{3}\theta_n\le\theta_{n+1}$.
\end{proof}

Using the fact that $C\subseteq W$, we can find some segments in $f_{k,m}(W^k)$ from Theorem \ref{2^m}.

\begin{lemma}\label{lemK}
If integer $k\geq 2^m$, then we have $e^{im\theta_n}[0,k]\subseteq f_{k,m}(W^k)$ for all $n\in\mathbb{Z}$.
\end{lemma}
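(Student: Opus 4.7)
The plan is to reduce the statement to the real-axis case (\cref{2^m} together with Guo's $m=3$ case from \cite{Guo}) by rotating via a suitable complex unit sitting inside $W$. Concretely, for each integer $n\geq 0$, the complex number $1+ir^n$ has argument exactly $\theta_n$ and lies in $W$, since $1\in C$ and $r^n\in C$ (the latter follows by iterating the self-similarity $r\cdot C\subseteq C$, which is immediate from the base-$3$ expansion description of $C$). Multiplying by any $c\in C$ keeps us in $W$: $c(1+ir^n)=c+i(r^n c)\in W$, because $r^n c\in C$. Raising to the $m$-th power gives
\[
\bigl(c(1+ir^n)\bigr)^m = c^m(1+r^{2n})^{m/2}e^{im\theta_n}.
\]

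Next I will promote \cref{2^m} (and its $m=3$ analogue due to Guo) from $k=2^m$ to every integer $k\geq 2^m$. Since $\{0,1\}\subseteq C$, any $t\in[0,k]$ can be split as $t=s+\ell$ with integer $\ell\in[0,k-2^m]$ and $s\in[0,2^m]$; expressing $s$ as a sum of $2^m$ $m$-th powers of elements of $C$ and padding with $\ell$ copies of $1$ and $k-2^m-\ell$ copies of $0$ then yields $[0,k]\subseteq f_{k,m}(C^k)$.

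Combining the two ingredients, for any target $\tau\in[0,k]$, I set $s=\tau/(1+r^{2n})^{m/2}$, which lies in $[0,k]$ because $(1+r^{2n})^{m/2}\geq 1$. Writing $s=c_1^m+\cdots+c_k^m$ with $c_j\in C$ and taking $z_j=c_j(1+ir^n)\in W$, I obtain
\[
\sum_{j=1}^k z_j^m = (1+r^{2n})^{m/2}e^{im\theta_n}\,s = \tau\, e^{im\theta_n},
\]
which proves $e^{im\theta_n}[0,k]\subseteq f_{k,m}(W^k)$ for $n\geq 0$. For $n<0$, I set $n'=-n$ and use $z_j=c_j(r^{n'}+i)\in W$ instead; since $\arg(r^{n'}+i)=\arctan(r^{-n'})=\theta_{-n'}=\theta_n$ and $|r^{n'}+i|=\sqrt{1+r^{2n'}}\geq 1$, the same computation runs through verbatim.

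There is no substantial obstacle in this argument; the only point requiring care is the elementary extension of \cref{2^m} from $k=2^m$ to arbitrary $k\geq 2^m$, which is immediate from $\{0,1\}\subseteq C$. The conceptual heart is that $W$ contains the entire rotated ``axis'' $\{c(1+ir^n):c\in C\}$ for $n\geq 0$ and $\{c(r^{|n|}+i):c\in C\}$ for $n<0$, and $m$-th power multiplication along these axes scales by a factor $\geq 1$, so the segment of length $k$ in direction $m\theta_n$ sits inside what we can realise.
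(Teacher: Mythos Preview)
Your proof is correct and follows essentially the same approach as the paper: multiply elements of $C$ by $1+ir^n$ (for $n\ge 0$) or $r^{|n|}+i$ (for $n<0$) to land in $W$, use that the modulus factor $(1+r^{2|n|})^{m/2}\ge 1$, and invoke $f_{k,m}(C^k)=[0,k]$. You are slightly more careful than the paper in two places: you spell out the extension from $k=2^m$ to $k\ge 2^m$ via the $\{0,1\}\subseteq C$ padding, and you note that the $m=3$ case relies on Guo's result rather than \cref{2^m}.
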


\begin{proof}
    For any $n\in\mathbb{N}$ and $x\in C$, we have $r^n x\in C$.
    So $(1+i r^n)x, (r^n+i)x\in W$.
    It follows from Theorem \ref{2^m} that $f_{k,m}(C^k)=[0,k]$ for $k\geq 2^m$.
    Then for $n\in\mathbb{N}$, taking the definition of $f_{k,m}$ into consideration, we have
  \begin{align*}
    (1+r^{2n})^{\frac{m}{2}}e^{im\theta_n}[0,k]=(1+i r^n)^m f_{k,m}(C^k)\subseteq f_{k,m}(W^k),\\  (1+r^{2n})^{\frac{m}{2}}e^{im\theta_{-n}}[0,k]=(r^n+i)^m f_{k,m}(C^k)\subseteq f_{k,m}(W^k).
  \end{align*}
   Hence, for each $n\in\mathbb{Z}$, we obtain
  \begin{equation*}
    e^{im\theta_n}[0,k]\subseteq \left(1+r^{2|n|}\right)^{\frac{m}{2}}e^{im\theta_n}[0,k]\subseteq f_{k,m}(W^k).
  \end{equation*}
\end{proof}

The following lemma shows that $f_{k,m}(W^k)$ has some symmetric property.

\begin{lemma}\label{lemfkmW}
Let $k$ and $m$ be positive integers.
Then we have $i^m \overline{f_{k,m}(W^k)}=f_{k,m}(W^k)$.
\end{lemma}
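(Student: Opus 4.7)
The plan is to identify a natural geometric symmetry of the Cantor dust $W$ that intertwines with conjugation and the $m$-th power map in exactly the right way.

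The key observation is that while $W = C + iC$ is \emph{not} invariant under ordinary complex conjugation (since $C\subseteq[0,1]$ has no negative elements), it is invariant under the involution $\phi(z) = i\bar z$, which geometrically is the reflection across the line $y=x$. Indeed, for $z = x+iy$ with $x,y\in C$ we have $\phi(z) = i(x-iy) = y+ix$, which still has both real and imaginary parts in $C$; and $\phi\circ\phi(z)=i\overline{i\bar z}=i(-i)z=z$, so $\phi$ is an involution of $W$ onto itself.

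Next, I would compute how $\phi$ interacts with the $m$-th power map: $\phi(z)^m = (i\bar z)^m = i^m\bar z^m$. Therefore, for any $(z_1,\ldots,z_k)\in W^k$,
\begin{equation*}
f_{k,m}\bigl(\phi(z_1),\ldots,\phi(z_k)\bigr) = \sum_{j=1}^{k}i^m\bar z_j^{\,m} = i^m\,\overline{f_{k,m}(z_1,\ldots,z_k)}.
\end{equation*}
Since $(\phi(z_1),\ldots,\phi(z_k))$ again lies in $W^k$, this gives the inclusion $i^m\,\overline{f_{k,m}(W^k)}\subseteq f_{k,m}(W^k)$.

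For the reverse inclusion, I would apply the same identity to a point $w=f_{k,m}(\phi(z_1),\ldots,\phi(z_k))$ on the right and use $\phi^2=\mathrm{id}$, or equivalently compute $i^m\overline{i^m}=i^m(-i)^m=1$, so applying $w\mapsto i^m\bar w$ twice returns $w$. Thus $f_{k,m}(W^k)\subseteq i^m\,\overline{f_{k,m}(W^k)}$, completing the proof. There is no real obstacle here; the only thing to be careful about is the asymmetry between $C$ and $-C$, which is precisely why one uses $\phi(z)=i\bar z$ rather than $\bar z$.
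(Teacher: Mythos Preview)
Your proof is correct and follows essentially the same approach as the paper: both use the involution $z\mapsto i\bar z$ (which swaps real and imaginary parts and hence preserves $W$), observe that $(i\bar z)^m=i^m\overline{z^m}$, and deduce the two inclusions from this identity. Your presentation is slightly more streamlined in packaging the map as an involution $\phi$ with $\phi^2=\mathrm{id}$, but the content is identical.
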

\begin{proof}
  If $z=x+iy\in W$ with $x,y\in C$, then $i\overline{z}=y+ix\in W$.
  Let $(z_1, z_2, \cdots, z_k)\in W^k$.
  On one hand, by the definition of $f_{k,m}$, we have $i^m\overline{f_{k,m}(z_1, z_2, \cdots, z_k)}=f_{k,m}(i\overline{z_1}, i\overline{z_2}, \cdots, i\overline{z_k})\in f_{k,m}(W^k)$.
  So $i^m \overline{f_{k,m}(W^k)}\subseteq f_{k,m}(W^k)$.
  On the other hand, we have
  \begin{align*}
   f_{k,m}(z_1, z_2, \cdots, z_k)&=i^mf_{k,m}(-i z_1, -i z_2, \cdots, -i z_k)\\
   &=i^m\overline{f_{k,m}(i\overline{z_1}, i\overline{z_2}, \cdots, i\overline{z_k})}\in i^m \overline{f_{k,m}(W^k)}.
  \end{align*}
  So $f_{k,m}(W^k)\subseteq i^m \overline{f_{k,m}(W^k)}$.
  The proof is completed.
\end{proof}

In order to prove Theorem 6.1, we discuss the range of $f_{k,m}(W^k)$ in four cases: $m\equiv 0\pmod 4$, $m\equiv 1\pmod 4$, $m\equiv 2\pmod 4$ and $m\equiv 3\pmod 4$.

\begin{lemma}
  If $m\equiv 0\pmod 4$, then we have $\frac{k}{24m^2}S\subseteq f_{3k,m}(W^{3k})$ for integer $k\geq 2^m$.
\end{lemma}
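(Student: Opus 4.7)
The plan is to combine Lemma~\ref{lemK} with the conjugation symmetry of Lemma~\ref{lemfkmW} (available because $i^m=1$ when $m\equiv 0\pmod 4$) to produce many rays inside $f_{k,m}(W^k)$, and then to form the Minkowski sum of three such rays that lies in $f_{3k,m}(W^{3k})$ and contains the target square $\tfrac{k}{24m^2}S$.

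First, Lemma~\ref{lemK} and Lemma~\ref{lemfkmW} together yield that for every $n\ge 0$ both $e^{im\theta_n}[0,k]$ and $e^{-im\theta_n}[0,k]$ lie in $f_{k,m}(W^k)$; moreover $[0,k]\subseteq f_{k,m}(C^k)\subseteq f_{k,m}(W^k)$ by Theorem~\ref{2^m} (using $C\subseteq W$). Using the decomposition $f_{3k,m}(W^{3k})\supseteq f_{k,m}(W^k)+f_{k,m}(W^k)+f_{k,m}(W^k)$, it therefore suffices to find $\beta\in[\pi/2,\pi]$ from the set of available angles such that the Minkowski sum $Z=[0,k]+e^{i\beta}[0,k]+e^{-i\beta}[0,k]$ already contains $\tfrac{k}{24m^2}S$.

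For the existence of $\beta$, the sequence $\phi_n:=m\theta_n$ is strictly decreasing from $m\pi/4$ to $0$ with consecutive ratios in $(1,3]$ (by Lemma~\ref{lem3beijiao}). A direct analysis using this factor-of-three control shows that some $\phi_{n_0}\bmod 2\pi$ must land in $(\pi/2,3\pi/2]$; replacing $\phi_{n_0}$ by $2\pi-\phi_{n_0}$ if necessary (which is legitimate by the conjugation symmetry) then gives the desired $\beta\in[\pi/2,\pi]$. To establish the containment $\tfrac{k}{24m^2}S\subseteq Z$, I parameterize $z=c_1+c_2 e^{i\beta}+c_3 e^{-i\beta}$ and change variables to $u=c_2+c_3$, $v=c_2-c_3$ (subject to $|v|\le\min(u,2k-u)$); a direct analysis of the resulting linear constraints shows that $Z$ contains every $(x,y)$ with $\max(|x|,|y|)\le s$ provided $s(\sin\beta+|\cos\beta|)\le k|\sin 2\beta|$. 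The task therefore reduces to the uniform inequality
\[
\frac{|\sin 2\beta|}{\sin\beta+|\cos\beta|}\ge\frac{1}{24m^2}.
\]

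The main obstacle is this final estimate. Heuristically, $\beta$ cannot be close to both $\pi/2$ and $\pi$ simultaneously, so one of $\sin\beta$ and $|\cos\beta|$ is bounded below by a constant while the other is at least of order $1/m$, giving $|\sin 2\beta|=\Omega(1/m^2)$. Rigorously this requires a careful application of Lemma~\ref{lem3beijiao} together with the Taylor expansion $\arctan x=x-x^3/3+O(x^5)$ at the two critical scales $r^n\asymp\tan(\pi/(2m))$ and $r^n\asymp\tan(\pi/m)$; the explicit constant $1/24$ is then recovered after routine bookkeeping of the numerical factors.
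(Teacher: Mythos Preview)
Your overall plan---rays from Lemma~\ref{lemK}, conjugation symmetry from Lemma~\ref{lemfkmW}, then a Minkowski sum of three segments---is in the right spirit, but the specific decomposition
\[
Z=[0,k]+e^{i\beta}[0,k]+e^{-i\beta}[0,k]
\]
with a \emph{single} angle $\beta$ is not adequate, and the final ``routine bookkeeping'' cannot be carried out. The concrete obstruction is already visible at $m=4$: there $m\theta_0=4\cdot\frac{\pi}{4}=\pi$ and $m\theta_1=4\arctan\tfrac13\approx 1.287<\tfrac{\pi}{2}$, so the \emph{only} available angle $\phi_n=m\theta_n$ lying in $(\pi/2,3\pi/2]$ is $\phi_0=\pi$ (and $m\theta_{-n}\equiv -m\theta_n\pmod{2\pi}$ adds nothing new). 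With $\beta=\pi$ the set $Z=[0,k]+[-k,0]+[-k,0]=[-2k,k]$ is purely real and contains no square $sS$ with $s>0$. More generally, your reduction needs $\beta$ bounded away from \emph{both} $\pi/2$ and $\pi$ by $\Omega(1/m^2)$, and the family $\{\pm m\theta_n\}$ provides no such guarantee near $\pi$; the ratio control of Lemma~\ref{lem3beijiao} only keeps you away from $\pi/2$ and $3\pi/2$.

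The paper avoids this by using \emph{two different} angles with different roles rather than a single symmetric pair. First it selects $n_1$ with $m\theta_{n_1}$ in $[\tfrac{\pi}{2}+c/m^2,\ \tfrac{3\pi}{2}-c/m^2]$ (possibly stepping from $n_0$ to $n_0-1$), so that $e^{im\theta_{n_1}}+e^{im\theta_{-n_1}}=2\cos(m\theta_{n_1})\le -\tfrac{1}{6m^2}$; this yields $[-t,2k]\subseteq f_{2k,m}(W^{2k})$ with $t=\tfrac{k}{6m^2}$. Then, independently, it uses $m\theta_{n_0+1}\in(\pi/6,\pi/2]$ (so $\sin(m\theta_{n_0+1})\ge\tfrac12$, $|\cos(m\theta_{n_0+1})|\le\tfrac{\sqrt3}{2}$) to add imaginary extent, obtaining a rectangle $[-\tfrac{t}{4},\tfrac{t}{4}]+i[0,\tfrac{t}{4}]$, and finally applies conjugation symmetry to get the full square $\tfrac{t}{4}S=\tfrac{k}{24m^2}S$. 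The decoupling of ``negative real part'' and ``imaginary part'' into two separate angle choices is exactly what your single-$\beta$ scheme is missing.
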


\begin{proof}
Note that $m\ge 4$, $\theta_0=\frac{\pi}{4}$, and $\theta_n\to 0$ as $n\to\infty$.
Then there is an integer $n_0\geq 0$ such that $m\theta_{n_0+1}\leq \frac{\pi}{2} < m \theta_{n_0}$.
According to Lemma \ref{lem3beijiao}, we have $\frac{\pi}{6}<m\theta_{n_0+1}\leq \frac{\pi}{2}$ and $\frac{\pi}{2} < m \theta_{n_0} \leq \frac{3\pi}{2}-mr^{3n_0+3}$.
It follows from Lemma \ref{inequality} that $\arctan x \leq x \leq\frac{4}{\pi}\arctan x$ for $x\in[0,1]$.
Then we obtain
\begin{equation*}
  \left(\frac{\pi}{6m}\right)^3<\theta_{n_0+1}^3 \leq r^{3n_0+3}\leq \left(\frac{4}{\pi}\theta_{n_0+1}\right)^3\leq\left(\frac{2}{m}\right)^3.
\end{equation*}
If $m\theta_{n_0} < \frac{\pi}{2}+mr^{3n_0+3}$, then using Lemma \ref{lem3beijiao} we obtain
\begin{equation*}
  m\theta_{n_0-1} \leq 3m\theta_{n_0}-mr^{3n_0}< \frac{3\pi}{2}-24mr^{3n_0+3} < \frac{3\pi}{2}-mr^{3n_0+3}.
\end{equation*}
By Lemma \ref{inequality}, we have $\arctan(3x) \geq \frac{3}{2}\arctan x$ for $x\in[0,1]$.
Then $m\theta_{n_0-1} \geq \frac{3}{2}m\theta_{n_0}>\frac{3\pi}{4}=\frac{\pi}{2}+\frac{\pi}{4}$ by letting $x=r^{n_0}$.
Note that $mr^{3n_0+3}\leq m(\frac{2}{m})^3=\frac{8}{m^2}\leq \frac{1}{2}< \frac{\pi}{4}$.
Hence, if $m\theta_{n_0} < \frac{\pi}{2}+mr^{3n_0+3}$, then $\frac{\pi}{2}+mr^{3n_0+3} \leq m\theta_{n_0-1} \leq \frac{3\pi}{2}-mr^{3n_0+3}$.
Let
\begin{equation*}
  n_1=\begin{cases}
        n_0-1, & \mbox{if } m\theta_{n_0} < \frac{\pi}{2}+mr^{3n_0+3},\\
        n_0, & \mbox{otherwise}.
      \end{cases}
\end{equation*}
It follows that
\begin{equation*}
  \frac{\pi}{2}+\frac{\pi^3}{216m^2}< \frac{\pi}{2}+mr^{3n_0+3} \leq  m\theta_{n_1} \leq  \frac{3\pi}{2}-mr^{3n_0+3}< \frac{3\pi}{2}-\frac{\pi^3}{216m^2}.
\end{equation*}
So $\frac{\pi^3}{216m^2}<\frac{3\pi}{2}-m\theta_{n_1}<\pi-\frac{\pi^3}{216m^2}$.
Again, by Lemma \ref{inequality}, we have $\sin x\geq \frac{2}{\pi}x$ for $x\in[0,1]$.
Note that $\frac{\pi^3}{216m^2}<1$.
We derive
\begin{equation*}
  -\cos(m\theta_{n_1})=\sin\left(\frac{3\pi}{2}-m\theta_{n_1}\right)>
  \sin\frac{\pi^3}{216m^2}\ge\frac{\pi^2}{108m^2}.
\end{equation*}
We have $m\theta_{-n_1}=m(\frac{\pi}{2}-\theta_{n_1})\equiv-m\theta_{n_1}\pmod {2\pi}$ since $m\equiv 0\pmod 4$.
Then
\begin{equation*}
  e^{im\theta_{n_1}}+e^{im\theta_{-n_1}}=2\cos(m\theta_{n_1})
      \leq -\frac{\pi^2}{54m^2}\leq -\frac{1}{6m^2}.
\end{equation*}
Let $k\geq 2^m$ and $t=\frac{k}{6m^2}<k$.
Using Lemma \ref{lemK}, we see that
\begin{align*}
  f_{2k,m}(W^{2k})&=f_{k,m}(W^{k})+ f_{k,m}(W^{k})\supseteq \left(e^{im\theta_{n_1}}[0,k]+e^{im\theta_{-n_1}}[0,k]\right)\cup [0,2k]\\
     &\supseteq(e^{im\theta_{n_1}}+e^{im\theta_{-n_1}})[0,k]\cup[0,2k] \supseteq[-t,0]\cup[0,2k]=[-t,2k].
\end{align*}
Moreover, we have $\frac{1}{2}<\sin(m\theta_{n_0+1}) \leq 1$ and $0\leq \cos(m\theta_{n_0+1}) < \frac{\sqrt{3}}{2}$.
Then
\begin{align*}
  f_{3k,m}(W^{3k})&=f_{2k,m}(W^{2k})+f_{k,m}(W^{k})\supseteq [-t,2k]+e^{im\theta_{n_0+1}}[0,k]\\
     &\supseteq\left\{a+b\cos(m\theta_{n_0+1})+ib\sin(m\theta_{n_0+1}): a\in[-t,t],b\in\left[0,\frac{\sqrt{3}}{3}t\right]\right\}\\
     &\supseteq \left\{a+ib\sin(m\theta_{n_0+1}):a\in\left[-\frac{t}{2},\frac{t}{2}\right],b\in\left[0,\frac{\sqrt{3}}{3}t\right]\right\}\\
     &\supseteq \left[-\frac{t}{2},\frac{t}{2}\right]+i\left[0,\frac{\sqrt{3}}{6}t\right]\supseteq \left[-\frac{1}{4}t,\frac{1}{4}t\right]+i\left[0,\frac{1}{4}t\right].
\end{align*}
Using Lemma \ref{lemfkmW}, we obtain $\overline{f_{3k,m}(W^{3k})}=f_{3k,m}(W^{3k})$.
It follows that $\frac{t}{4}S=\frac{k}{24m^2}S\subseteq f_{3k,m}(W^{3k})$.
\end{proof}

\begin{lemma}
  If $m\equiv 3\pmod 4$, then we have $\frac{\sqrt{2}k}{4}S\subseteq f_{4k,m}(W^{4k})$ for integer $k\geq 2^m$.
\end{lemma}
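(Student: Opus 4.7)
The key structural fact for $m\equiv 3\pmod 4$ is that $e^{im\pi/4}=(-1)^{(m-3)/4}e^{i3\pi/4}$ always lies on the line $y=-x$, which is exactly the axis of the reflection $z\mapsto -i\bar z$ supplied by Lemma \ref{lemfkmW}. The plan is threefold: (i)~build a segment through the origin along this axis; (ii)~thicken it by one roughly perpendicular direction; and (iii)~reflect the result across $y=-x$ using the symmetry.

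For (i), take the forward ray $e^{im\theta_0}[0,k]\subseteq f_{k,m}(W^k)$ (from Lemma \ref{lemK} at $n=0$). For the backward ray, one computes
\begin{equation*}
  e^{im\theta_n}+e^{im\theta_{-n}}=2\cos(m\theta_n+\pi/4)\,e^{-i\pi/4},
\end{equation*}
valid because $m\pi/2\equiv -\pi/2\pmod{2\pi}$ when $m\equiv 3\pmod 4$. Pick an index $n_1$ so that $\cos(m\theta_{n_1}+\pi/4)$ has the sign opposite to that of the $n=0$ direction and $|\cos(m\theta_{n_1}+\pi/4)|\ge 1/4$; the diagonal of $e^{im\theta_{n_1}}[0,k]+e^{im\theta_{-n_1}}[0,k]$ then furnishes the backward segment and lives in $f_{2k,m}(W^{2k})$. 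Combining forward and backward pieces produces, inside $f_{3k,m}(W^{3k})$, a segment on $y=-x$ through the origin that extends at least $k/2$ on each side.

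For (ii), I mimic the previous lemma's device: choose $n_0$ with $m\theta_{n_0+1}\le\pi/4<m\theta_{n_0}$, whence Lemma \ref{lem3beijiao} forces $m\theta_{n_0+1}\in(\pi/12,\pi/4]$. Thus $e^{im\theta_{n_0+1}}[0,k]$ (costing $k$ more copies, for $4k$ total) is a segment of length $k$ making angle $\le\pi/6$ with $e^{i\pi/4}$, so its component perpendicular to $y=-x$ has magnitude $\ge k\sqrt3/2$. Minkowski summing with (i) yields a parallelogram $R\subseteq f_{4k,m}(W^{4k})$ on one side of $y=-x$, and applying $-i\overline{\,\cdot\,}$ produces its mirror $R'$ on the other side, also in $f_{4k,m}(W^{4k})$ by Lemma \ref{lemfkmW}.

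To conclude, pass to rotated coordinates $(u,v)=((x+y)/\sqrt2,(x-y)/\sqrt2)$: the target $\tfrac{\sqrt2 k}{4}S$ is the diamond $|u|+|v|\le k/2$, the axial contribution covers the slice $\{u=0,\,|v|\le k/2\}$, and the perpendicular contribution provides $u$-range of magnitude $\ge k\sqrt3/2>k/2$; an elementary parallelogram check then shows $R\cup R'$ contains every point of the diamond. The main obstacle is the existence of $n_1$ in (i) when $(m-3)/4$ is odd, since there one needs $\cos(m\theta_{n_1}+\pi/4)<0$ rather than the easy $>0$ case (obtained for $(m-3)/4$ even just by letting $n_1\to\infty$, as then $\cos(m\theta_{n_1}+\pi/4)\to 1/\sqrt2$). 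The odd case requires a discrete-intermediate-value argument along the descent of $m\theta_n$ from $m\pi/4$ to $0$, using Lemma \ref{lem3beijiao}'s factor-of-three control on $\theta_{n+1}/\theta_n$ to guarantee that $m\theta_{n_1}$ can be steered into an interval on which $\cos(\,\cdot\,+\pi/4)$ is uniformly negative.
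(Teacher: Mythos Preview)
Your outline is correct, but the paper takes a much shorter path by invoking the symmetry of Lemma~\ref{lemfkmW} at the \emph{start} rather than at the end: from $-i\,\overline{f_{k,m}(W^k)}=f_{k,m}(W^k)$ one gets $-i[0,k]\subseteq f_{k,m}(W^k)$ immediately, so $f_{2k,m}(W^{2k})$ already contains the solid square $[0,k]-i[0,k]$. After that a single segment in direction $e^{i3\pi/4}$---obtained from one pair $(\theta_{n_0},\theta_{-n_0})$ with $m\theta_{n_0+1}\le\tfrac{\pi}{3}<m\theta_{n_0}$, so that $m\theta_{n_0}\in(\tfrac{\pi}{3},\pi)$ and $\cos(\tfrac{3\pi}{4}-m\theta_{n_0})>\cos\tfrac{5\pi}{12}>\tfrac14$---suffices to translate a sub-square of side $\tfrac{\sqrt2}{2}k$ so that it is centred at the origin, and no parity split on $(m-3)/4$ ever arises. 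Your route (build an axial segment, thicken, reflect) mirrors the $m\equiv 0\pmod 4$ template and does go through, including the discrete-IVT step for the hard parity: with threshold $\tfrac14$ the target interval for $m\theta_{n_1}$ is roughly $[1.04,3.68]$, and since $3.68/3\approx 1.23>1.04$ the factor-three control of Lemma~\ref{lem3beijiao} indeed guarantees a hit once $m\theta_0>3.68$, i.e.\ for $m\ge 7$. But the case analysis and the slightly delicate numerics are self-inflicted---they vanish once you use the symmetry to produce a two-dimensional region up front instead of saving it for a reflection at the end.
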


\begin{proof}
Since $m\equiv 3\pmod 4$, we have $-i\overline{f_{k,m}(W^k)}=f_{k,m}(W^k)$ from Lemma \ref{lemfkmW}.
It follows that $-i[0,k]=-if_{k,m}(C^k)\subseteq f_{k,m}(W^k)$.
Since $k\geq 2^m$, we have
\begin{equation*}
  f_{2k,m}(W^{2k})= f_{k,m}(W^{k})+f_{k,m}(W^{k}) \supseteq [0,k]-i[0,k].
\end{equation*}
Note that $m\ge 3$, $\theta_0=\frac{\pi}{4}$ and $\theta_n\to 0$ as $n\to\infty$.
We can see that $m\theta_{n_0+1}\leq \frac{\pi}{3} < m \theta_{n_0}$ for some integer $n_0\geq 0$.
By Lemma \ref{lem3beijiao}, we have $\frac{\pi}{3} < m\theta_{n_0} < 3m\theta_{n_0+1} \leq \pi$.
So $-\frac{5\pi}{12}<-\frac{\pi}{4} < \frac{3\pi}{4}-m\theta_{n_0}<\frac{5\pi}{12}$.
Since $m\equiv 3\pmod 4$, we obtain $m\theta_{-n}=m(\frac{\pi}{2}-\theta_{n})\equiv\frac{3\pi}{2}-m\theta_{n}\pmod {2\pi}$.
Then we derive that $e^{im\theta_{n_0}}+e^{im\theta_{-n_0}}=2\cos(\frac{3\pi}{4}-m\theta_{n_0})e^{i\frac{3\pi}{4}}$.
Notice that $\cos(\frac{3\pi}{4}-m\theta_{n_0})\geq \cos \frac{5\pi}{12} >\frac{1}{4}$.
From Lemma \ref{lemK}, we obtain
\begin{align*}
  f_{2k,m}(W^{2k})&= f_{k,m}(W^{k})+f_{k,m}(W^{k}) \supseteq e^{im\theta_{n_0}}[0,k]+e^{im\theta_{-n_0}}[0,k]\\
     &\supseteq \left(e^{im\theta_{n_0}}+e^{im\theta_{-n_0}}\right)[0,k]\supseteq e^{i\frac{3\pi}{4}}\left[0,\frac{k}{2}\right].
\end{align*}
Consequently, we have
\begin{align*}
  f_{4k,m}(W^{4k})&=f_{2k,m}(W^{2k})+f_{2k,m}(W^{2k})\\
     &\supseteq\left[0,\frac{\sqrt{2}}{2}k\right]-i\left[0,\frac{\sqrt{2}}{2}k\right]+\frac{k}{2}e^{i\frac{3\pi}{4}}\\
     &=\left[0,\frac{\sqrt{2}}{2}k\right]-\frac{\sqrt{2}}{4}k+i\left(\left[-\frac{\sqrt{2}}{2}k,0\right]+\frac{\sqrt{2}}{4}k\right)
     =\frac{\sqrt{2}k}{4}S.
\end{align*}
\end{proof}

\begin{lemma}
If $m\equiv 1\pmod 4$, then we have $\frac{2k}{25}(\frac{2}{3})^m S\subseteq f_{4k,m}(W^{4k})$ for integer $k\geq 2^m$.
\end{lemma}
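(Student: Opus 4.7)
The plan is to mirror the proofs of the $m\equiv 0\pmod 4$ and $m\equiv 3\pmod 4$ cases, but since the segments $e^{im\theta_n}[0,k]$ from Lemma~\ref{lemK} alone may not cover the third quadrant when $m\equiv 1\pmod 4$ (for example when $m=57$, every angle $m\theta_n\pmod{2\pi}$ avoids $(\pi,3\pi/2)$), I would enrich the construction with the elements $w_l:=(1-r)+ir^l\in W$ and $w_l':=r^l+i(1-r)\in W$, well-defined for $l\geq 1$ since $(1-r),r^l\in C$. The first-quadrant piece comes from symmetry as before: $[0,k]\subseteq f_{k,m}(W^k)$ combined with $i^m=i$ and Lemma~\ref{lemfkmW} gives $i[0,k]\subseteq f_{k,m}(W^k)$, hence $f_{2k,m}(W^{2k})\supseteq[0,k]+i[0,k]$, a closed square of side $k$ in the first quadrant.

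For the shift into the third quadrant, observe that $w_l^m=((1-r)^2+r^{2l})^{m/2}e^{im\phi_l}$ where $\phi_l:=\arctan\!\frac{r^l}{1-r}$, so $|w_l|^m$ is pinched between $(\tfrac{2}{3})^m$ and a modest constant multiple thereof when $r^l\ll 1-r$. I would pick an integer $l_0$ so that $m\phi_{l_0}\pmod{2\pi}$ lies in the ``good'' interval $(3\pi/4,7\pi/4)$, bounded away from both endpoints; such $l_0$ exists because $m\phi_l$ decreases from $m\arctan\!\frac{1}{1-r}$ to $0$ as $l$ grows, with consecutive gaps $m(\phi_l-\phi_{l+1})\approx mr^l$ of size roughly $(1-r)\pi=\tfrac{2\pi}{3}$ when $m\phi_l\approx\pi$---strictly less than $\pi$, the length of the good interval. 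Since $w_l'=i\overline{w_l}$ and $i^m=i$, Lemma~\ref{lemfkmW} gives $(w_l')^m=i\overline{w_l^m}$; writing $w_{l_0}^m=a+ib$ then yields
\[
w_{l_0}^m+(w_{l_0}')^m=(a+b)(1+i),
\]
where $a+b=\sqrt{2}|w_{l_0}|^m\sin(m\phi_{l_0}+\pi/4)$ is negative and bounded below in magnitude by a constant multiple of $|w_{l_0}|^m$ on the good interval.

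Taking $k$ copies each of $w_{l_0}$ and $w_{l_0}'$ in addition to the $2k$ elements producing the first-quadrant square,
\[
f_{4k,m}(W^{4k})\supseteq[0,k]+i[0,k]+k(a+b)(1+i).
\]
Setting $A:=-k(a+b)\gtrsim k(\tfrac{2}{3})^m$, the right-hand side equals $[-A,k-A]+i[-A,k-A]$, which contains the symmetric square $tS$ with $t=\min(A,k-A)$. Since $(\tfrac{2}{3})^m\leq\tfrac{1}{2}$ for $m\geq 3$ (and $A$ is bounded by a constant times $k(\tfrac{2}{3})^m$), $t=A$, and tracking constants yields $t\geq\tfrac{2k}{25}(\tfrac{2}{3})^m$.

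The principal technical obstacle is ensuring that $m\phi_{l_0}$ is \emph{uniformly} bounded away from the endpoints $3\pi/4$ and $7\pi/4$ of the good interval, not merely inside it: the step-size of $\tfrac{2\pi}{3}$ guarantees some $l_0$ with $m\phi_{l_0}\in(3\pi/4,7\pi/4)$, but to obtain a lower bound on $|\sin(m\phi_{l_0}+\pi/4)|$ that dominates $(\tfrac{2}{3})^m$ for every $m\equiv 1\pmod 4$, $m\geq 3$, a case analysis on the position of $m\phi_{l_0}$ (or a secondary adjustment $l_0\mapsto l_0\pm 1$, analogous to the $n_0-1$ trick used in the $m\equiv 0\pmod 4$ case) is required.
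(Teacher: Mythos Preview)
Your overall architecture --- build the first-quadrant square $[0,k]+i[0,k]\subseteq f_{2k,m}(W^{2k})$ from $i^m=i$ and Lemma~\ref{lemfkmW}, then shift it along the diagonal $e^{i5\pi/4}$ into a square centered at the origin --- is exactly what the paper does. Your shift element $w_l=(1-r)+ir^l=\tfrac{2}{3}+ir^l$ with argument $\phi_l=\arctan(\tfrac{3}{2}r^l)$ is precisely the element the paper introduces in its Case~3 (there written $x=\tfrac23$, $y=r^{n_0+1}$, $\beta=\arctan(\tfrac32 r^{n_0+1})$), and your computation $w_l^m+(w_l')^m=(a+b)(1+i)$ is correct.

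The genuine gap is your step-size claim. You assert that consecutive gaps $m(\phi_l-\phi_{l+1})\approx mr^l$ are about $(1-r)\pi=\tfrac{2\pi}{3}<\pi$ ``when $m\phi_l\approx\pi$'', and hence some $m\phi_{l_0}$ must land in $(3\pi/4,7\pi/4)$. But the relevant step is the one \emph{entering} the interval from above, where $m\phi_{l-1}$ is just over $7\pi/4$; there the step is $\approx(1-r)\cdot\tfrac{7\pi}{4}=\tfrac{7\pi}{6}>\pi$, so the sequence can jump the entire good interval. Concretely, for $m=13$ one computes $13\phi_1=13\arctan\tfrac12\approx 6.03>7\pi/4$ and $13\phi_2=13\arctan\tfrac16\approx 2.15<3\pi/4$, while $13\phi_0\approx 12.78\equiv 0.21\pmod{2\pi}$ and $13\phi_l<3\pi/4$ for all $l\ge 2$. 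Thus for $m=13$ there is \emph{no} $l\ge 0$ with $m\phi_l\pmod{2\pi}\in(3\pi/4,7\pi/4)$, so $a+b>0$ for every choice of $l$ and your shift never points into the third quadrant. The adjustment $l_0\mapsto l_0\pm1$ cannot help here.

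The paper avoids this by \emph{not} relying solely on the $w_l$. It makes a three-way case split on $m\theta_{n_0+1}$ (where $\theta_n=\arctan r^n$ and $m\theta_{n_0+1}\le\pi<m\theta_{n_0}$): when $m\theta_{n_0+1}\le\tfrac{7\pi}{12}$ or $m\theta_{n_0+1}\ge\tfrac{5\pi}{6}$ it uses the unit-modulus segments $e^{im\theta_{\pm n}}[0,k]$ from Lemma~\ref{lemK}, obtaining a segment in direction $e^{i5\pi/4}$ of length $\gtrsim k/m^2$ (via the cubic correction in Lemma~\ref{lem3beijiao}); only in the intermediate range $\tfrac{7\pi}{12}<m\theta_{n_0+1}<\tfrac{5\pi}{6}$ does it switch to your element $\tfrac23+ir^{n_0+1}$, and there the inequalities $\tfrac75\arctan x\le\arctan(\tfrac32x)\le\tfrac32\arctan x$ force $m\beta\in(\tfrac{49\pi}{60},\tfrac{5\pi}{4})$, giving a \emph{constant} lower bound $\cos(\tfrac{5\pi}{4}-m\beta)>\tfrac15$. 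The final bound $\tfrac{2k}{25}(\tfrac23)^m$ comes from this third case. To repair your proof you need to reintroduce the $1+ir^n$ segments (or some other large-modulus element) to handle the range where the $w_l$ alone fail.
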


\begin{proof}
Since $m\equiv 1\pmod 4$, it follows from Lemma \ref{lemfkmW} that $i\overline{f_{k,m}(W^k)}=f_{k,m}(W^k)$.
Then we obtain $if_{k,m}(C^k)\subseteq f_{k,m}(W^k)$.
If $k\geq 2^m$, then we have
\begin{equation*}
  f_{2k,m}(W^{2k})= f_{k,m}(W^{k})+f_{k,m}(W^{k}) \supseteq[0,k]+i[0,k].
\end{equation*}
Note that $m\ge 5$, $\theta_0=\frac{\pi}{4}$ and $\theta_n\to 0$ as $n\to\infty$.
We derive $m\theta_{n_0+1}\leq \pi < m \theta_{n_0}$ for some integer $n_0\geq 0$.
Using the fact that $m\equiv 1\pmod 4$, we obtain
$m\theta_{-n}=m(\frac{\pi}{2}-\theta_{n})\equiv\frac{5\pi}{2}-m\theta_{n}\pmod {2\pi}$.
Then we prove the lemma in three cases according to the range of $m\theta_{n_0+1}$.
  \begin{enumerate}
    \item $0 < m\theta_{n_0+1}\leq \frac{7\pi}{12}$.

    By Lemma \ref{lem3beijiao}, we have $m\theta_{n_0+1}>\frac{1}{3}m\theta_{n_0}>\frac{\pi}{3}$ and $\pi < m\theta_{n_0} \leq 3m\theta_{n_0+1}-mr^{3n_0+3}\leq \frac{7\pi}{4}-mr^{3n_0+3}$.
    Then $e^{im\theta_{n_0}}+e^{im\theta_{-n_0}}= 2\cos(m\theta_{n_0}-\frac{5\pi}{4})e^{i\frac{5\pi}{4}}$.
    From Lemma \ref{inequality}, we have $x\geq\arctan x$ for $x\in[0,1]$, then $mr^{3n_0+3}\geq m\theta_{n_0+1}^3>m\left(\frac{\pi}{3m}\right)^3
     =\frac{\pi^3}{27m^2}$.
    Note that $\frac{\pi^3}{27m^2}< \frac{\pi}{4}$.
    Then
    \begin{equation*}
      \frac{3\pi}{4}+\frac{\pi^3}{27m^2}<\pi< m\theta_{n_0}<\frac{7\pi}{4}-\frac{\pi^3}{27m^2}.
    \end{equation*}
    It follows that $\frac{\pi^3}{27m^2}< m\theta_{n_0}-\frac{3\pi}{4} < \pi-\frac{\pi^3}{27m^2}$.
    Again, by Lemma \ref{inequality}, we have $\sin x\geq \frac{2}{\pi}x$ for $x\in[0,1]$.
    Since $\frac{\pi^3}{27m^2}<1$, we obtain $\cos\left(m\theta_{n_0}-\frac{5\pi}{4}\right)=\sin\left(m\theta_{n_0}-\frac{3\pi}{4}\right)>
     \sin\frac{\pi^3}{27m^2} \geq \frac{2\pi^2}{27m^2} > \frac{2}{3m^2}$.
    It follows from Lemma \ref{lemK} that
    \begin{align*}
     f_{2k,m}(W^{2k})&= f_{k,m}(W^{k})+f_{k,m}(W^{k}) \supseteq e^{im\theta_{n_0}}[0,k]+e^{im\theta_{-n_0}}[0,k]\\
     &\supseteq \left(e^{im\theta_{n_0}}+e^{im\theta_{-n_0}}\right)[0,k]\supseteq e^{i\frac{5\pi}{4}}\left[0,\frac{4k}{3m^2}\right].
    \end{align*}
    Let $t_1=\frac{4k}{3m^2}$.
    Then $\sqrt{2}t_1<k$.
    We derive
    \begin{align*}
     f_{4k,m}(W^{4k})&= f_{2k,m}(W^{2k})+f_{2k,m}(W^{2k})\\
     &\supseteq [0,\sqrt{2}t_1]+i[0,\sqrt{2}t_1]+e^{i\frac{5\pi}{4}}t_1\\
     &=[0,\sqrt{2}t_1]-\frac{\sqrt{2}}{2}t_1+i\left([0,\sqrt{2}t_1]-\frac{\sqrt{2}}{2}t_1\right)
     =\frac{\sqrt{2}t_1}{2}S=\frac{2\sqrt{2}k}{3m^2}S.
    \end{align*}

    \item $\frac{5\pi}{6}\leq m\theta_{n_0+1}\leq \pi$.

    It is not difficult to see that $e^{im\theta_{n_0+1}}+e^{im\theta_{-(n_0+1)}}= 2\cos(m\theta_{n_0+1}-\frac{5\pi}{4})e^{i\frac{5\pi}{4}}$. Moreover, we have  $\cos(\frac{5\pi}{4}-m\theta_{n_0+1})\geq\cos\frac{5\pi}{12}>\frac{1}{4}$.
    From Lemma \ref{lemK}, we obtain
    \begin{align*}
     f_{2k,m}(W^{2k})&= f_{k,m}(W^{k})+f_{k,m}(W^{k})\supseteq e^{im\theta_{n_0+1}}[0,k]+e^{im\theta_{-(n_0+1)}}[0,k]\\
     &\supseteq \left(e^{im\theta_{n_0+1}}+e^{im\theta_{-(n_0+1)}}\right)[0,k]\supseteq e^{i\frac{5\pi}{4}}\left[0,\frac{k}{2}\right].
    \end{align*}
    Then
    \begin{align*}
     f_{4k,m}(W^{4k})&= f_{2k,m}(W^{2k})+f_{2k,m}(W^{2k})\\
     &\supseteq \left[0,\frac{\sqrt{2}}{2}k\right]+i\left[0,\frac{\sqrt{2}}{2}k\right]+\frac{k}{2}e^{i\frac{5\pi}{4}}\\
     &=\left[0,\frac{\sqrt{2}}{2}k\right]-\frac{\sqrt{2}}{4}k+i\left(\left[0,\frac{\sqrt{2}}{2}k\right]-\frac{\sqrt{2}}{4}k\right)
     =\frac{\sqrt{2}k}{4}S.
    \end{align*}

    \item $\frac{7\pi}{12} < m\theta_{n_0+1} < \frac{5\pi}{6}$.

    It follows from Lemma \ref{inequality} that $\frac{7}{5}\arctan x \leq \arctan(\frac{3}{2}x) \leq  \frac{3}{2}\arctan x$ for $x\in[0,\frac{1}{3}]$.
    Then
    \begin{equation*}
      \frac{49\pi}{60}<\frac{7}{5}m\theta_{n_0+1}\leq m\arctan\left(\frac{3}{2}r^{n_0+1}\right)\leq \frac{3}{2}m\theta_{n_0+1} < \frac{5\pi}{4}.
    \end{equation*}
    Let $\beta=\arctan(\frac{3}{2}r^{n_0+1})$. Then $\arctan\left(\frac{2}{3}r^{-(n_0+1)}\right)=\frac{\pi}{2}-\beta$ and $0 < \frac{5\pi}{4}-m\beta < \frac{13\pi}{30}$.
    So we have
    $\cos(\frac{5\pi}{4}-m\beta)\in(\frac{1}{5},1)$.

    Let $x=\frac{2}{3}$, $y= r^{n_0+1}$ and  $\rho=\sqrt{\frac{4}{9}+r^{2n_0+2}}\in(\frac{2}{3},1)$.
    We can see that $\rho e^{i\beta}=x+iy\in W$ and $\rho e^{i(\frac{\pi}{2}-\beta)}=y+ix\in W$ since $x,y\in C$.
    Using the fact that $m\equiv1\pmod 4$, we obtain
    $m(\frac{\pi}{2}-\beta)\equiv\frac{5\pi}{2}-m\beta\pmod {2\pi}$.

    Let $k_1=\lfloor\frac{k}{3}\rfloor$.
    Since $k\geq 2^m$ and $m\geq 5$, we derive $1<\frac{\sqrt{2}k}{5}<\frac{k}{3}-1<k_1\leq\frac{k}{3}$.
    Then
    \begin{align*}
    2k_1\rho^m\cos\left(\frac{5\pi}{4}-m\beta\right) e^{i\frac{5\pi}{4}}&=k_1\rho^m e^{im\beta}+k_1\rho^m e^{i(\frac{5\pi}{2}-m\beta)}\\
    &\in f_{k_1,m}(W^{k_1})+f_{k_1,m}(W^{k_1})\subseteq f_{2k,m}(W^{2k}).
    \end{align*}
    Let $t_2=2k_1\rho^m\cos(\frac{5\pi}{4}-m\beta)$.
    Then $\frac{2\sqrt{2}k}{25}(\frac{2}{3})^m<\frac{2}{5}k_1(\frac{2}{3})^m<t_2<2k_1<\frac{\sqrt{2}}{2}k$,
    and we have
    \begin{align*}
    f_{4k,m}(W^{4k})&= f_{2k,m}(W^{2k})+f_{2k,m}(W^{2k})\\
    &\supseteq[0,k]+i[0,k]+t_2 e^{i\frac{5\pi}{4}}\\
    &=[0,k]-\frac{\sqrt{2}}{2}t_2+i\left([0,k]-\frac{\sqrt{2}}{2}t_2\right)\supseteq \frac{\sqrt{2}t_2}{2}S\supseteq \frac{2k}{25}\left(\frac{2}{3}\right)^mS.
    \end{align*}
  \end{enumerate}
Through the discussion of the above three cases, since $\frac{\sqrt{2}}{4}>\frac{2\sqrt{2}}{3m^2}>\frac{2}{25}(\frac{2}{3})^m$ for $m\ge 5$,
we obtain that $\frac{2k}{25}(\frac{2}{3})^mS\subseteq f_{4k,m}(W^{4k})$.
\end{proof}

\begin{lemma}
    If $m\equiv 2\pmod 4$, then we have $\frac{12k}{125}(\frac{2}{3})^mS\subseteq f_{4k,m}(W^{4k})$ for integer  $k\geq 2^m$.
\end{lemma}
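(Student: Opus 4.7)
My plan is to parallel the argument of the preceding lemma (the case $m\equiv 1\pmod 4$), with modifications reflecting that $m\equiv 2\pmod 4$ forces $i^m=-1$ and $m\pi/2\equiv\pi\pmod{2\pi}$. Consequently Lemma \ref{lemfkmW} gives $-\overline{f_{k,m}(W^k)}=f_{k,m}(W^k)$, and together with $[0,k]\subseteq f_{k,m}(W^k)$ this yields $[-k,k]\subseteq f_{2k,m}(W^{2k})$. Moreover $m\theta_{-n}\equiv\pi-m\theta_n\pmod{2\pi}$, so
\[
e^{im\theta_n}+e^{im\theta_{-n}}=2i\sin(m\theta_n),
\]
which is purely imaginary. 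Pairing the two segments $e^{im\theta_n}[0,k]$ and $e^{im\theta_{-n}}[0,k]$ from Lemma \ref{lemK} with equal parameters therefore places an imaginary segment $i[0,2k\sin(m\theta_n)]$ (or its reflection, according to the sign of $\sin(m\theta_n)$) inside $f_{2k,m}(W^{2k})$.

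Since $m\geq 6$, I would pick $n_0\geq 0$ minimal with $m\theta_{n_0+1}\leq\pi/2<m\theta_{n_0}$; Lemma \ref{lem3beijiao} then forces $m\theta_{n_0+1}\in(\pi/6,\pi/2]$, so $\sin(m\theta_{n_0+1})>1/2$ and we immediately obtain the \emph{upward} imaginary segment $i[0,k]\subseteq f_{2k,m}(W^{2k})$. The essential remaining task is to supply a \emph{downward} imaginary component of comparable magnitude, because the symmetry $z\mapsto-\bar z$ preserves imaginary parts and so cannot help here. Splitting on the location of $m\theta_{n_0+1}$ into three sub-cases analogous to those of the preceding lemma, the two easy ranges allow taking a different index $n$ for which $\sin(m\theta_n)$ is negative and bounded below in absolute value; in the bad middle range I would mimic Case 3 of the preceding lemma by constructing the point $\rho e^{i\beta}=\frac{2}{3}+ir^{n_0+1}\in W$ with $\beta=\arctan(\frac{3}{2}r^{n_0+1})$ and $\rho\in(\frac{2}{3},1)$, together with its mirror $\rho e^{i(\pi/2-\beta)}=r^{n_0+1}+i\frac{2}{3}\in W$. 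Lemma \ref{inequality}(3) places $m\beta$ in $[\frac{7}{5}m\theta_{n_0+1},\frac{3}{2}m\theta_{n_0+1}]$, a range where $m\beta\pmod{2\pi}$ will sit near $3\pi/2$ with $|\sin(m\beta)|\geq\frac{3}{5}$; summing $k_1=\lfloor k/4\rfloor$ copies of each of the two points then produces the single element $2ik_1\rho^m\sin(m\beta)\in f_{2k,m}(W^{2k})$ on the negative imaginary axis, of magnitude at least $\frac{24k}{125}(\frac{2}{3})^m$.

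Finally, in $f_{4k,m}(W^{4k})=f_{2k,m}(W^{2k})+f_{2k,m}(W^{2k})$ I would combine the real segment $[-k,k]$ with both the upward imaginary segment and the downward imaginary segment (or downward point, in the bad case); the union of the resulting rectangles covers the centered square $\frac{12k}{125}(\frac{2}{3})^m S$ once one checks the elementary inequality $k\geq\frac{24k}{125}(\frac{2}{3})^m$, valid since $(\frac{2}{3})^m\leq 1$. The principal obstacle is the bad sub-case: verifying that for every $m\equiv 2\pmod 4$ in the middle range, one really can arrange $m\beta\pmod{2\pi}$ to sit in the third quadrant (so that $\sin(m\beta)<0$ with $|\sin(m\beta)|\geq 3/5$) rather than merely close to the real axis. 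This may require passing to an alternative point of $C\times C$ (for instance $(1,\frac{2}{3})$ or $(\frac{2}{3},\frac{2}{3})$) for residues of $m$ for which the first choice lands in the second quadrant, and the ensuing constant-tracking to obtain precisely the coefficient $12/125$ parallels that of Case 3 of the preceding lemma.
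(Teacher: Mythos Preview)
Your overall architecture is right: exploit $-\overline{f_{k,m}(W^k)}=f_{k,m}(W^k)$ to get the real segment, produce an upward imaginary piece, produce a downward imaginary piece, and assemble a centred square. But the execution has two genuine gaps.

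\textbf{The downward construction lands on the wrong side.} You fix $n_0$ by $m\theta_{n_0+1}\le\pi/2<m\theta_{n_0}$ and then, in the ``bad'' sub-case, set $\beta=\arctan(\tfrac{3}{2}r^{n_0+1})$. But since $m\theta_{n_0+1}\in(\pi/6,\pi/2]$, Lemma~\ref{inequality} gives $m\beta\in[\tfrac{7}{5}m\theta_{n_0+1},\tfrac{3}{2}m\theta_{n_0+1}]\subseteq(\tfrac{7\pi}{30},\tfrac{3\pi}{4}]$, so $\sin(m\beta)>0$ and the paired sum $2ik_1\rho^m\sin(m\beta)$ lies on the \emph{positive} imaginary axis, not the negative one you need. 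Your speculation about alternative points like $(\tfrac23,\tfrac23)$ does not help uniformly either (e.g.\ for $m\equiv 2\pmod 8$ that choice gives $m\cdot\tfrac{\pi}{4}\equiv\tfrac{\pi}{2}$, again upward). The paper avoids this by introducing a \emph{second} index $n_1$ with $m\theta_{n_1+1}\le\pi<m\theta_{n_1}$, so that $m\theta_{n_1}\in(\pi,2\pi)$ is available; it then does a case split on $m\theta_{n_1+1}$ and, when the obvious choice $\theta_{n_1}$ is too close to $\pi$ or $2\pi$, uses auxiliary points $\tfrac23+ir^{n_1}$ (note: exponent $n_1$, not $n_1{+}1$) or $1+2ir^{n_1+1}$ to force the angle into the lower half-plane with a quantitative lower bound on $|\sin|$.

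\textbf{The combining step does not fit in the $4k$ budget as you have set it up.} In your bad case the downward contribution is a single \emph{point} $-it$, so $[-k,k]+(-it)$ is a horizontal segment, not a rectangle; its union with the upper rectangle leaves the strip $\{-t<\mathrm{Im}\,z<0\}$ uncovered. The paper's remedy is to place the entire upper half-square already in $f_{2k,m}(W^{2k})$: it observes $[-k,k]\subseteq f_{k,m}(W^k)$ (a union, not a sum, using the symmetry), adds one slanted ray $e^{im\theta_{n_0+1}}[0,k]\subseteq f_{k,m}(W^k)$ with $m\theta_{n_0+1}\in(\pi/4,3\pi/4]$, and extracts $[-\tfrac{k}{2},\tfrac{k}{2}]+i[0,\tfrac{k}{2}]\subseteq f_{2k,m}(W^{2k})$ from the resulting parallelogram. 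One can then translate this whole rectangle by the downward point (itself in $f_{2k,m}$) and stay within $f_{4k,m}$. Your version spends $2k$ on $[-k,k]$ and $2k$ on the upward segment, so the upper rectangle only sits in $f_{4k,m}$ and there is no room left to shift.
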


\begin{proof}
Since $m\equiv 2\pmod 4$, we obtain that $-\overline{f_{k,m}(W^k)}=f_{k,m}(W^k)$ from Lemma \ref{lemfkmW}.
Then we derive $-f_{k,m}(C^k)\subseteq f_{k,m}(W^k)$.
If $k\geq 2^m$, then we have
\begin{equation*}
  f_{k,m}(W^{k}) \supseteq [-k,0]\cup[0,k]= [-k,k].
\end{equation*}
Note that $m\ge 6$, $\theta_0=\frac{\pi}{4}$, and $\theta_n\to 0$ as $n\to\infty$.
Then we can see that  $m\theta_{n_0+1}\leq \frac{3\pi}{4} < m \theta_{n_0}$ and $m\theta_{n_1+1}\leq \pi < m \theta_{n_1}$ for some  $n_0,n_1\geq 0$.
Taking $m\equiv 2\pmod 4$ into consideration, we obtain that
$m\theta_{-n}=m(\frac{\pi}{2}-\theta_{n})\equiv 3\pi-m\theta_{n}\pmod {2\pi}$.
From Lemma \ref{lem3beijiao}, we have $\frac{\pi}{4}<\frac{1}{3}m\theta_{n_0}\le m\theta_{n_0+1}\le\frac{3\pi}{4}$.
So $\frac{\sqrt{2}}{2}\le\sin(m\theta_{n_0+1}) \leq 1$ and $-\frac{\sqrt{2}}{2}\leq \cos(m\theta_{n_0+1})\le\frac{\sqrt{2}}{2}$.
It follows from Lemma \ref{lemK} that
\begin{align*}
  f_{2k,m}(W^{2k})&=f_{k,m}(W^{k})+f_{k,m}(W^{k})\supseteq [-k,k]+e^{im\theta_{n_0+1}}[0,k]\\
     &\supseteq\left\{a+b\cos(m\theta_{n_0+1})+ib\sin(m\theta_{n_0+1}): a\in[-k,k],b\in\left[0,\frac{\sqrt{2}}{2}k\right]\right\}\\
     &\supseteq \left\{a+ib\sin(m\theta_{n_0+1}):a\in\left[-\frac{k}{2},\frac{k}{2}\right],b\in\left[0,\frac{\sqrt{2}}{2}k\right]\right\}\\
     &\supseteq \left[-\frac{k}{2},\frac{k}{2}\right]+i\left[0,\frac{k}{2}\right].
\end{align*}
Again, by Lemma \ref{lem3beijiao}, we have  $\frac{\pi}{3}<m\theta_{n_1+1}\le\pi$.
We next prove the lemma in two cases according to the range of $m\theta_{n_1+1}$.
   \begin{enumerate}
    \item $\frac{\pi}{3}< m\theta_{n_1+1}\leq \frac{2\pi}{3}$.

    From Lemma \ref{lem3beijiao}, we have  $\pi<m\theta_{n_1}\le 2\pi-m r^{3n_1+3}$.
    We next consider the range of $m\theta_{n_1}$ in two subcases.
     \begin{enumerate}
     \item $\pi < m\theta_{n_1} \leq \frac{7\pi}{6}$.

     Since $m\theta_0\geq\frac{3\pi}{2}>m\theta_{n_1}$, we have $n_1\geq 1$.
     We obtain $\frac{5}{4}\arctan x \leq \arctan(\frac{3}{2}x) \leq  \frac{3}{2}\arctan x$ for $x\in[0,1]$ from Lemma \ref{inequality}.
     So we have $\frac{5\pi}{4}<
     \frac{5}{4}m\theta_{n_1}\leq m\arctan\left(\frac{3}{2}r^{n_1}\right)\leq \frac{3}{2}m\theta_{n_1} \leq \frac{7\pi}{4}$.
     Let $\beta_1=\arctan(\frac{3}{2}r^{n_1})$.
     Then we obtain that $\arctan(\frac{2}{3}r^{-n_1})=\frac{\pi}{2}-\beta_1$ and $-\frac{\pi}{4}\leq \frac{3\pi}{2}-m\beta_1 \leq\frac{\pi}{4}$.
     So $\cos(\frac{3\pi}{2}-m\beta_1)\in(\frac{\sqrt{2}}{2},1)$.

     Let $x_1=\frac{2}{3}$, $y_1= r^{n_1}$ and  $\rho_1=\sqrt{\frac{4}{9}+r^{2n_1}}\in(\frac{2}{3},1)$.
     Then $\rho_1 e^{i\beta_1}=x_1+iy_1\in W$ and $\rho_1 e^{i(\frac{\pi}{2}-\beta_1)}=y_1+ix_1\in W$ since $x_1,y_1\in C$.
     Using the fact that $m\equiv 2\pmod 4$, we have
     $m(\frac{\pi}{2}-\beta_1)\equiv 3\pi-m\beta_1\pmod {2\pi}$.

     Let $k_1=\big\lfloor\frac{k}{4}\big\rfloor$.
     Since $k\geq 2^m$ and $m\geq 6$, we derive $\frac{k}{5}<k_1\le\frac{k}{4}$ and
     \begin{align*}
     -2k_1\rho_1^m\cos\left(\frac{3\pi}{2}-m\beta_1\right)i &=k_1\rho_1^m e^{im\beta_1}+k_1\rho_1^m e^{i(3\pi-m\beta_1)}\\
     &\in f_{k_1,m}(W^{k_1})+f_{k_1,m}(W^{k_1})\subseteq f_{2k,m}(W^{2k}).
     \end{align*}

     Let $t_1=2k_1\rho_1^m\cos(\frac{3\pi}{2}-m\beta_1)$.
     Then $\frac{\sqrt{2}k}{5}(\frac{2}{3})^m \leq t_1\leq\frac{k}{2}$.
     It follows that
      \begin{align*}
      f_{4k,m}(W^{4k})&\supseteq \left(f_{2k,m}(W^{2k})+f_{2k,m}(W^{2k})\right)\cup f_{2k,m}(W^{2k})\\
      &\supseteq \left(\left[-\frac{k}{2},\frac{k}{2}\right]+i\left[0,\frac{k}{2}\right]-t_1 i\right)\cup \left(\left[-\frac{k}{2},\frac{k}{2}\right]+i\left[0,\frac{k}{2}\right]\right)\\
      &\supseteq t_1 S\supseteq \frac{\sqrt{2}k}{5}\left(\frac{2}{3}\right)^mS.
      \end{align*}

      \item $\frac{7}{6}\pi< m\theta_{n_1} \leq 2\pi-mr^{3n_1+3}$.

     It follows from Lemma \ref{inequality} that $\arctan x \leq x \leq\frac{4}{\pi}\arctan x$ for $x\in[0,1]$. Then
     \begin{equation*}
     \frac{\pi^3}{27m^2}\leq m\theta_{n_1+1}^3\leq mr^{3n_1+3}\leq m\left(\frac{4}{\pi}\theta_{n_1+1}\right)^3
     \leq\frac{2^9}{27m^2}.
     \end{equation*}
     So we have $\pi+\frac{\pi^3}{27m^2}<\pi+\frac{\pi}{6}=\frac{7\pi}{6} < m\theta_{n_1} < 2\pi-\frac{\pi^3}{27m^2}$.
     Moreover, we can see that $e^{im\theta_{n_1}}+e^{im\theta_{-n_1}}=-2i\cos(\frac{3\pi}{2}-m\theta_{n_1})$.
     Again, by Lemma \ref{inequality}, we have $\sin x\geq \frac{2}{\pi}x$ for $x\in[0,1]$.
     Note that $\frac{\pi^3}{27m^2}<1$.
     We derive $\cos\left(\frac{3\pi}{2}-m\theta_{n_1}\right)=-\sin(m\theta_{n_1})>\sin\frac{\pi^3}{27m^2}\geq \frac{2\pi^2}{27m^2}>\frac{2}{3m^2}$.
     From Lemma \ref{lemK}, we have
     \begin{align*}
     f_{2k,m}(W^{2k})&= f_{k,m}(W^{k})+f_{k,m}(W^{k})\supseteq e^{im\theta_{n_1}}[0,k]+e^{im\theta_{-n_1}}[0,k]\\
     &\supseteq \left(e^{im\theta_{n_1}}+e^{im\theta_{-n_1}}\right)[0,k]\supseteq -i\left[0,\frac{4k}{3m^2}\right].
     \end{align*}
     Since $\frac{4k}{3m^2}<\frac{k}{4}$, we obtain
     \begin{align*}
     f_{4k,m}(W^{4k})&=f_{2k,m}(W^{2k})+f_{2k,m}(W^{2k})\\
     &\supseteq \left[-\frac{k}{2},\frac{k}{2}\right]+i\left[0,\frac{k}{2}\right]+i\left[-\frac{4k}{3m^2},0\right]\supseteq\frac{4k}{3m^2}S.
     \end{align*}
     \end{enumerate}
     We have $\frac{\sqrt{2}}{5}(\frac{2}{3})^m<\frac{4}{3m^2}$ since $m\geq 6$.
     Through the discussion of the above two subcases, we derive $\frac{\sqrt{2}k}{5}(\frac{2}{3})^mS\subseteq f_{4k,m}(W^{4k})$.

    \item $\frac{2\pi}{3} < m\theta_{n_1+1}\leq \pi$:

     Using Lemma \ref{inequality}, we have $\frac{9}{5}\arctan x \leq \arctan (2x)\leq 2\arctan x-x^3$ for $x\in[0,\frac{1}{3}]$ and $\arctan x \leq x$ for $x\in[0,1]$.
     Then we can see that
     \begin{equation*}
      \frac{6\pi}{5}<\frac{9}{5}m\theta_{n_1+1}\leq m\arctan(2r^{n_1+1}) \leq 2m\theta_{n_1+1}-m r^{3n_1+3}\leq 2\pi-mr^{3n_1+3}
     \end{equation*}
     and $mr^{3n_1+3}\geq m\theta_{n_1+1}^3>m\left(\frac{2\pi}{3m}\right)^3=\frac{8\pi^3}{27m^2}$.
     Let $\beta_2=\arctan(2r^{n_1+1})$.
     Since $\frac{8\pi^3}{27m^2}<\frac{\pi}{5}$, we have
     \begin{equation*}
     \pi+\frac{8\pi^3}{27m^2}<\pi+\frac{\pi}{5}=\frac{6\pi}{5}
     <m\beta_2<2\pi-\frac{8\pi^3}{27m^2}.
     \end{equation*}
     It follows from Lemma \ref{inequality} that $\sin x\geq \frac{2}{\pi}x$ for $x\in[0,1]$.
     Note that $\frac{8\pi^3}{27m^2}<1$.
     We derive
     \begin{equation*}
       1\geq \cos\left(\frac{3\pi}{2}-m\beta_2\right)=-\sin(m\beta_2)
       >\sin\frac{8\pi^3}{27m^2}\geq \frac{16\pi^2}{27m^2}>\frac{5}{m^2}.
     \end{equation*}
     Let $x_2=1$, $y_2=2r^{n_1+1}$ and $\rho_2=\sqrt{1+4r^{2n_1+2}}\in(1,\frac{5}{4})$.
     Then $\rho_2 e^{i\beta_2}=x_2+iy_2\in W$ and $\rho_2 e^{i(\frac{\pi}{2}-\beta_2)}=y_2+ix_2\in W$ since $x_2,y_2\in C$.

     Let $k_2=\big\lfloor\frac{1}{4}(\frac{4}{5})^mk\big\rfloor\leq\frac{1}{4}(\frac{4}{5})^mk$. Then $k_2\geq \frac{1}{4}(\frac{4}{5})^mk-1\geq\frac{4}{25}(\frac{4}{5})^mk\geq 1$ since $m\geq 6$.
     Using the fact that $m\equiv 2\pmod 4$, we have
     $m(\frac{\pi}{2}-\beta_2)\equiv 3\pi-m\beta_2\pmod {2\pi}$.
     Then
     \begin{align*}
     -2k_2\rho_2^m\cos\left(\frac{3\pi}{2}-m\beta_2\right)i &=k_2\rho_2^m e^{im\beta_2}+k_2\rho_2^m e^{i(3\pi-m\beta_2)}\\
     &\in f_{k_2,m}(W^{k_2})+f_{k_2,m}(W^{k_2})\subseteq f_{2k,m}(W^{2k}).
     \end{align*}
     Let $t_2=2k_2\rho_2^m\cos(\frac{3\pi}{2}-m\beta_2)$.
     Then $\frac{8k}{5m^2}(\frac{4}{5})^{m}=\frac{10}{m^2}\cdot \frac{4}{25}(\frac{4}{5})^{m}k\leq t_2\leq\frac{k}{2}$.
     Consequently,
      \begin{align*}
      f_{4k,m}(W^{4k})&= \left(f_{2k,m}(W^{2k})+f_{2k,m}(W^{2k})\right)\cup f_{2k,m}(W^{2k})\\
      &\supseteq \left(\left[-\frac{k}{2},\frac{k}{2}\right]+i\left[0,\frac{k}{2}\right]-t_2 i\right)\cup \left(\left[-\frac{k}{2},\frac{k}{2}\right]+i\left[0,\frac{k}{2}\right]\right)\\
      &\supseteq t_2 S\supseteq \frac{8k}{5m^2}\left(\frac{4}{5}\right)^{m}S.
      \end{align*}
  \end{enumerate}
  We have  $\frac{12}{125}(\frac{2}{3})^m <\min\{\frac{\sqrt{2}}{5}(\frac{2}{3})^m,\frac{8}{5m^2}(\frac{4}{5})^{m}\}$ since $m\geq 6$.
  Through the discussion of the above two cases, we derive $\frac{12k}{125}(\frac{2}{3})^mS\subseteq f_{4k,m}(W^{4k})$.
\end{proof}

We are now ready to prove Theorem 6.1.

\begin{proof}
  We have $\frac{1}{100}(\frac{2}{3})^{m}< \frac{1}{24m^2}$, $\frac{1}{100}(\frac{2}{3})^{m}<\frac{\sqrt{2}}{4}$, $\frac{1}{100}(\frac{2}{3})^{m}<\frac{2}{25}(\frac{2}{3})^{m}$ and $\frac{1}{100}(\frac{2}{3})^{m}<\frac{12}{125}(\frac{2}{3})^{m}$ since $m\geq 3$.
  Combining Lemmas 6.6-6.9, we can see that $\frac{k}{100}(\frac{2}{3})^{m}S\subseteq f_{4k,m}(W^{4k})$ for $m\geq 3$ and $k\geq 2^m$.
\end{proof}

\section{$p$-adic Cantor set}

Let $m\geq 1$ be an integer and $p$ be a prime throughout this section.
By the Fundamental Theorem of Arithmetic, any $x\in \mathbb{Q}\backslash\{0\}$ can be written as the form $x=p^{v_p(x)}\frac{m}{n}$, where $m$ and $n$ are integers, $p\nmid mn$ and $v_p(x)\in\mathbb{Z}$.
We adopt the convention that $v_p(0)=\infty$.
Define
\begin{equation*}|x|_p=\left\{
\begin{aligned}
&p^{-v_p(x)}, & x\ne 0,\\
&0 , & x=0.
\end{aligned}\right.
\end{equation*}
It is well known that $|\cdot|_p$ is a norm on $\mathbb{Q}$.
The completion of $\mathbb{Q}$ under the norm $|\cdot|_p$ is denoted by $\mathbb{Q}_p$.
Let $N=\max\{-v_p(x),0\}$.
We can write $x\in\mathbb{Q}_p$ as
\begin{equation*}
  x=\sum_{n=-N}^{\infty} a_n p^n ,\ a_n\in\{0,1,\cdots,p-1\}.
\end{equation*}
The set of $p$-adic integers $\mathbb{Z}_p=\{x\in \mathbb{Q}_p: |x|_p\leq 1\}$ is the closure of $\mathbb{Z}$ in $\mathbb{Q}_p$ and it is compact.
We can write $x\in\mathbb{Z}_p$ as
\begin{equation*}
  x=\sum_{n=0}^{\infty} a_n p^n ,\ a_n\in\{0,1,\cdots,p-1\}.
\end{equation*}
Let $\mu_H$ denote the Haar measure on $\mathbb Z_p$.
Then $\mu_H(a+p^N \mathbb{Z}_p)=\frac{1}{p^N}$.
We refer the reader to \cite{Koblitz} for the general theory of $p$-adic numbers.

Let $\gamma\in p\mathbb{Z}_p$ and $2|\gamma|_p<1$.
Define
\begin{equation*}
  \mathcal{C}_\gamma=\mathcal{C}_{p,\gamma}=\left\{\sum_{n=0}^{\infty}a_n \gamma^n, a_n\in\{0,\gamma-1\}\right\}\subseteq\mathbb{Z}_p.
\end{equation*}
We call $\mathcal{C}_\gamma$ the $p$-adic Cantor set and we have $\mu_H(\mathcal{C}_\gamma)=0$.

In this section, we consider a similar Waring's problem on $p$-adic Cantor set $\mathcal{C}_\gamma=\mathcal{C}_{p,\gamma}$ with $\gamma\in p\mathbb{Z}_p$.
More precisely, for integer $m\ge 1$, is there a positive integer $k$ such that every element in $\mathbb{Z}_p$ can be written as $x_1^m+x_2^m+\cdots+x_k^m$ with $x_j\in C_\gamma$ and $1\leq j\leq k$?

We first consider the sum of elements in $\mathcal{C}_\gamma$ and obtain the following result.
\begin{theorem}
  Let $p$ be a prime, $\gamma\in p\mathbb{Z}_p$ and $2|\gamma|_p<1$.
  Then
  $\mathcal{G}_\gamma=\frac{1}{|\gamma|_p}-1$ is the smallest positive integer $k$ that satisfies
  \begin{equation*}
    \mathbb{Z}_p=\{x_1+x_2+\cdots+x_k:x_i\in \mathcal{C}_\gamma, 1\leq i\leq k\}.
  \end{equation*}
\end{theorem}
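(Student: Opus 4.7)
The plan is to exploit the base-$\gamma$ structure of $\mathcal{C}_\gamma$. Every $x \in \mathcal{C}_\gamma$ has a canonical expansion $x = \sum_{n\geq 0} a_n \gamma^n$ with $a_n \in \{0, \gamma-1\}$, so for $x_1,\dots,x_k \in \mathcal{C}_\gamma$ with digits $a_{i,n}$, interchanging summation yields
\[
\sum_{i=1}^{k} x_i \;=\; \sum_{n\geq 0}\gamma^n \sum_{i=1}^{k} a_{i,n} \;=\; (\gamma-1)\sum_{n\geq 0} j_n \gamma^n,
\]
where $j_n := \#\{i : a_{i,n} = \gamma-1\} \in \{0,1,\dots,k\}$. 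Since the digit sequences of the $x_i$ can be chosen independently, the $j_n$ range freely and independently over $\{0,1,\dots,k\}$.

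Set $s := v_p(\gamma)\geq 1$, so $|\gamma|_p = p^{-s}$ and $\gamma\mathbb{Z}_p = p^{s}\mathbb{Z}_p$. Moreover $\gamma - 1 \equiv -1 \pmod{p}$ is a unit in $\mathbb{Z}_p$, hence multiplication by $\gamma-1$ is a bijection of $\mathbb{Z}_p$. The theorem therefore reduces to determining the smallest $k$ for which every $J\in\mathbb{Z}_p$ admits an expansion $J = \sum_{n\geq 0} j_n \gamma^n$ with $j_n \in \{0,1,\dots,k\}$. When $k = p^{s} - 1 = |\gamma|_p^{-1} - 1$, the digit set $\{0,1,\dots,k\}$ is a complete residue system modulo $p^{s}$, so given $J$ we recursively pick $j_n \in \{0,\dots,p^{s}-1\}$ with $j_n \equiv J_n \pmod{p^{s}}$ (starting from $J_0 := J$) and set $J_{n+1} := (J_n - j_n)/\gamma \in \mathbb{Z}_p$; because $|\gamma|_p < 1$, the partial sums $\sum_{n\leq N} j_n \gamma^n$ converge to $J$ in $\mathbb{Q}_p$, giving the upper bound $\mathcal{G}_\gamma \leq p^{s}-1$.

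Conversely, if $k \leq p^{s}-2$ then $\{0,1,\dots,k\}$ misses some residue $r \in \{0,\dots,p^{s}-1\}$ modulo $p^{s}$; every $\sum_{n\geq 0} j_n \gamma^n$ with $j_n \in \{0,1,\dots,k\}$ is congruent to $j_0 \pmod{p^{s}}$ and therefore avoids the class $r$, so the representable set is a proper subset of $\mathbb{Z}_p$. Translating back through the bijection $z \mapsto (\gamma-1)z$ shows that $k$ elements of $\mathcal{C}_\gamma$ cannot sum to every element of $\mathbb{Z}_p$, giving the matching lower bound. The only conceptually substantive step is the algebraic rewriting that isolates the unit factor $\gamma-1$; after that the upper bound is Hensel-style successive approximation and the lower bound is a one-line residue obstruction. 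The hypothesis $2|\gamma|_p < 1$ enters only to guarantee $p^{s}\geq 3$, so that $\mathcal{G}_\gamma \geq 2$ and the problem is nontrivial.
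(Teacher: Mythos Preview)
Your proof is correct and follows essentially the same approach as the paper: both arguments factor out the unit $\gamma-1$ to reduce the question to base-$\gamma$ expansions with digit set $\{0,1,\dots,k\}$, establish the upper bound $k=p^{s}-1$ via the fact that $\{0,\dots,p^{s}-1\}$ is a complete residue system modulo $p^{s}$ (the paper phrases this as a direct expansion identity, you as successive approximation), and obtain the lower bound from the residue obstruction modulo $p^{s}$. Your write-up is a bit more streamlined, but there is no substantive difference in method.
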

\begin{proof}
  Assume $\gamma=p^u \gamma_1$ and $p\nmid \gamma_1$, then $\mathcal{G}_\gamma=p^u-1$.
  Note that $\mathbb{Z}_p=\left\{\sum\limits_{n=0}^{\infty}b_n (p^u)^n:b_n\in\{0,1,\cdots,\mathcal{G}_r\}\right\}$.
  Since $p\nmid \gamma_1$, the residue class of module $p^u$ of the set $\left\{b_n \gamma_1^n: b_n\in \{0,1,\cdots,\mathcal{G}_\gamma\}\right\}$ is $\{0,1,\cdots,\mathcal{G}_\gamma\}$ for $n\geq 0$.
  Then
  \begin{equation*}
    \mathbb{Z}_p=\left\{\sum\limits_{n=0}^{\infty}b_n\gamma_1^n(p^u)^n:b_n\in\{0,1,\cdots,\mathcal{G}_r\}\right\}
    =\left\{\sum\limits_{n=0}^{\infty}b_n\gamma^n:b_n\in\{0,1,\cdots,\mathcal{G}_r\}\right\}.
  \end{equation*}
  Hence, any $x\in\mathbb{Z}_p$ can be written as $x=\sum\limits_{n=0}^{\infty}b_n\gamma^n$ with $b_n\in \{0,1,\cdots,\mathcal{G}_\gamma\}$
  and $(\gamma-1)x=\sum\limits_{n=0}^{\infty}b_n(\gamma-1)\gamma^n$.
  Let
  \begin{equation*}
   b_{i,n}=\begin{cases}
   \gamma-1,& b_n\geq i, \\
   0,& b_n<i,
   \end{cases}\
   x_i=\sum\limits_{n=0}^{\infty}b_{i,n} \gamma^n,\ 1\leq i\leq \mathcal{G}_\gamma.
   \end{equation*}
   It follows that $x_i\in \mathcal{C}_\gamma$ for $1\leq i \leq \mathcal{G}_\gamma$ and $(\gamma-1)x=x_1+x_2+\cdots+x_{\mathcal{G}_\gamma}$.
   Since $p|\gamma$, we have $p\nmid\gamma-1$.
   So $\mathbb{Z}_p=(\gamma-1)\mathbb{Z}_p=\{x_1+x_2+\cdots+x_{\mathcal{G}_\gamma}:x_i\in \mathcal{C}_\gamma, 1\leq i\leq \mathcal{G}_\gamma\}$.

   We next prove that $\mathbb{Z}_p\neq \{x_1+x_2+\cdots+x_{\mathcal{G}_\gamma-1}:x_i\in \mathcal{C}_\gamma, 1\leq i\leq \mathcal{G}_\gamma-1\}$.
   We have
   \begin{equation*}
     \{x_1+x_2:x_1,x_2\in \mathcal{C}_\gamma\}\subseteq\{0,\gamma-1,2(\gamma-1)\}+p^u\mathbb{Z}_p
   \end{equation*}
   since $\mathcal{C}_\gamma\subseteq\{0,\gamma-1\}+p^u\mathbb{Z}_p$.
   Further, we obtain
   \begin{equation*}
     \{x_1+x_2+\cdots+x_{\mathcal{G}_\gamma-1}:x_i\in \mathcal{C}_\gamma, 1\leq i\leq \mathcal{G}_\gamma-1\}\subseteq\{0,\gamma-1,2(\gamma-1),\cdots,(\mathcal{G}_\gamma-1)(\gamma-1)\}+p^u\mathbb{Z}_p.
   \end{equation*}
   So $\mathcal{G}_\gamma\cdot  (\gamma-1)=(p^u-1)(\gamma-1)\notin\{x_1+x_2+\cdots+x_{\mathcal{G}_\gamma-1}:x_i\in \mathcal{C}_\gamma, 1\leq i\leq \mathcal{G}_\gamma-1\}$.
\end{proof}

Let $\mathcal{G}_\gamma(m)$ denote the smallest positive integer $k$ that satisfies
  \begin{equation*}
    \mathbb{Z}_p=\{x_1^m+x_2^m+\cdots+x_k^m:x_i\in \mathcal{C}_\gamma, 1\leq i\leq k\}.
  \end{equation*}
Then $\mathcal{G}_\gamma(1)=\mathcal{G}_\gamma=\frac{1}{|\gamma|_p}-1$ by the above theorem.
For the case $m\ge 2$, we need to consider $p=2$ and $p\ge 3$ separately.

  For $p=2$, we have the following result.

\begin{theorem}
  Suppose that $\gamma\in 2\mathbb{Z}_2$ and $2|\gamma|_2<1$. For any integer $m\geq 2$, there is a positive integer  $k\leq\frac{1}{|\gamma|_2}\left(\frac{1}{|\gamma m|_2}+1\right)-2$, such that
  \begin{equation*}
    \mathbb{Z}_2=\{x_1^m+x_2^m+\cdots+x_k^m:x_i\in \mathcal{C}_\gamma, 1\leq i\leq k\}.
  \end{equation*}
  In other words, $\mathcal{G}_\gamma(m)\leq\frac{1}{|\gamma|_2}\left(\frac{1}{|\gamma m|_2}+1\right)-2$.
\end{theorem}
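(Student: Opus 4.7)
Write $|\gamma|_2 = 2^{-u}$ (so $u \ge 2$ by the hypothesis $2|\gamma|_2 < 1$) and $|m|_2 = 2^{-v}$, and let $u' := \gamma - 1$, a 2-adic unit. The target bound
\[
k^\ast := \tfrac{1}{|\gamma|_2}\bigl(\tfrac{1}{|\gamma m|_2}+1\bigr)-2 = 2^u(2^{u+v}+1)-2
\]
splits as $\mathcal{G}_\gamma + \bigl(\tfrac{1}{|\gamma^2 m|_2}-1\bigr) = (2^u-1)+(2^{2u+v}-1)$, suggesting a strategy of $\mathcal{G}_\gamma$ ``primary'' summands for first-digit matching together with $\tfrac{1}{|\gamma^2 m|_2}-1$ ``refinement'' summands for higher-digit matching. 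Since $\mathcal{C}_\gamma = u'\,\mathcal{D}_\gamma$ with $\mathcal{D}_\gamma := \{\sum_n a_n\gamma^n : a_n \in \{0,1\}\}$ and $(u')^m$ is a unit, the theorem is equivalent to the assertion $\mathbb{Z}_2 = \{z_1^m+\cdots+z_{k^\ast}^m : z_i \in \mathcal{D}_\gamma\}$. The plan is to construct each $z_i = \sum_n a_{i,n}\gamma^n$ ($a_{i,n}\in\{0,1\}$) digit-by-digit so that $\sum_i z_i^m$ matches a given target $w\in\mathbb{Z}_2$ modulo increasing powers of $\gamma$, then pass to the 2-adic limit via compactness of $\mathcal{D}_\gamma^{k^\ast}$ and continuity of $f_{k^\ast,m}$ (an argument parallel to Lemma \ref{fkmCk}).

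The key identity is the multinomial expansion: the $\gamma^N$-coefficient of $\sum_i z_i^m$, as a function of the new digits $a_{i,N}$ (holding $a_{i,n}$ for $n<N$ fixed), contains a linear-in-unknowns contribution $m\sum_i a_{i,0}^{m-1}a_{i,N} = m\cdot c_N$, where $c_N := |\{i : a_{i,0} = a_{i,N} = 1\}|$, since the only multinomial $(n_1,\ldots,n_m)$ with $\sum n_j = N$ that uses the index $N$ at all has $j_N = 1$ and all other $n_j = 0$. Matching $w$ one digit at a time thus leads to a sequence of congruences $m\cdot c_N \equiv \tau_N\pmod\gamma$ with known $\tau_N \in \mathbb{Z}/\gamma\mathbb{Z}$, and the new difficulty beyond the $m=1$ case of Theorem 7.1 is the $|m|_2 = 2^{-v}$ divisibility obstruction: without extra slack, $c_N$ ranging over $\{0,\ldots,2^u-1\}$ only reaches the subgroup $2^{\min(v,u)}\mathbb{Z}/2^u\mathbb{Z}$ of residues mod $\gamma$. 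My plan to overcome this is \emph{amplification}: by reserving $2^{u+v}$ (rather than $2^u$) refinement slots per residue class of the leading digit, the map $c_N \mapsto m\cdot c_N\pmod\gamma$ becomes surjective. Since the leading-digit split requires up to $2^u$ residue classes of $c_0 = |\{i:a_{i,0}=1\}|$, the overall budget comes out to roughly $2^u(2^{u+v}+1) = k^\ast+2$, with the ``$-2$'' recovered by identifying redundant zero summands (using $0 \in \mathcal{C}_\gamma$ with $0^m=0$).

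The main obstacle is executing this induction rigorously. Once the $a_{i,N}$ are fixed at one level, they enter as constants in all subsequent equations via multinomial cross-terms involving several $n_j \in \{1,\ldots,N\}$, shifting the effective target $\tau_{N'}$ at every later level $N'>N$. Showing that the amplification budget of $2^{u+v}$ slots per leading-digit residue class is never exhausted — even after these cross-terms reshape $\tau_{N'}$ — requires an induction carrying a pigeonhole-style invariant that tracks which subsets of summands remain ``free'' for adjustments at each future level. The edge case $v \ge u$, in which $m \equiv 0 \pmod\gamma$ kills the linear term entirely and one must exploit the sub-leading contribution $\binom{m}{2}a_{i,0}^{m-2}a_{i,1}^2\gamma^2$ together with higher-order amplification, is where the bound $k^\ast = 2^{2u+v}+2^u-2$ becomes effectively tight, and its careful treatment is where the bulk of the technical work should reside.
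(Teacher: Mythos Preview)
Your split $k^\ast=(2^u-1)+(2^{2u+v}-1)$ and the digit-by-digit Hensel-type strategy are both correct in spirit, but the mechanism you propose to overcome the $2^v$-divisibility obstruction does not work, and the roles of the two batches are reversed relative to what is actually needed.

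The claimed amplification is false as stated: enlarging the range of $c_N$ cannot make $c_N\mapsto m\,c_N\pmod{2^u}$ surjective. Since $m=2^v m_1$ with $m_1$ odd, the image of this map is exactly $2^{\min(v,u)}\mathbb{Z}/2^u\mathbb{Z}$ no matter how many slots you reserve; a target digit $\tau_N\not\equiv 0\pmod{2^{\min(v,u)}}$ is simply unreachable at that level. Your fallback for the case $v\ge u$ (exploiting the quadratic cross-term $\binom{m}{2}a_{i,0}^{m-2}a_{i,1}^2\gamma^2$) does not help either, since that term is not linear in the unknown $a_{i,N}$ and in any case does not address the generic obstruction when $1\le v<u$.

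The paper's resolution is different and cleaner: rather than matching one $\gamma$-digit at a time (i.e.\ working modulo $2^{uN}$), one carries the induction modulo $2^{uN+v}$, with a built-in $v$-shift. If the $2^u-1$ ``refinement'' summands are all kept congruent to $\gamma-1\pmod{2^u}$ (hence units), then incrementing any one of them by $(\gamma-1)\gamma^N$ changes the sum of $m$-th powers, modulo $2^{u(N+1)+v}$, by a fixed unit times $2^{uN+v}$ (the linear term $m z^{m-1}(\gamma-1)\gamma^N$ has $2$-valuation exactly $uN+v$, and a short estimate $Nuj-v_2(j)\ge u(N+1)+v$ for $j\ge 2$ kills the higher binomial terms). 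Choosing how many of the $2^u-1$ summands to increment then hits every residue class modulo $2^u$ of the $2^{uN+v}$-part, with no divisibility loss. Consequently the large batch of $2^{2u+v}-1$ summands is not used for refinement at all: it is spent entirely at the base step $N=2$, where one needs to match modulo $2^{2u+v}$ and does so with single-digit summands $y_i\in\{0,\gamma-1\}$. This handles all $v\ge 0$ uniformly; no separate treatment of $v\ge u$ or appeal to quadratic terms is required.
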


\begin{proof}
  Assume $\gamma=2^u \gamma_1\in 2\mathbb{Z}_2$ and $2\nmid \gamma_1$.
  We should have $u\geq 1$.
  Assume $m=2^v m_1$ and $2\nmid m_1$.
  We claim that for any $x\in\mathbb{Z}_2$ and $N\geq 2$, there are $y_i\in\{0,\gamma-1\}$ for $1\leq i\leq 2^{2u+v}-1$ and $x_{i,N}\in \bigg\{\sum\limits_{n=0}^{N-1}a_n \gamma^n:a_n\in\{0,\gamma-1\}\bigg\}$ for $1\leq i\leq 2^{u}-1$ with $x_{i,N}\equiv \gamma-1\pmod {2^u}$, such that
  \begin{equation}\label{thm7.2}
    x-\sum_{i=1}^{2^{2u+v}-1}y_i^m\equiv \sum_{i=1}^{2^{u}-1}x_{i,N}^m\pmod {2^{uN+v}}.
  \end{equation}
  We now prove it by induction on $N$.

  Suppose $N=2$.
  Let $x_{i,2}=\gamma-1$ for $1\leq i\leq 2^{u}-1$.
  We have $2\nmid(\gamma-1)$ since $2|\gamma$.
  Then the residue class of module $2^{2u+v}$ of the set $\left\{k (\gamma-1)^m: k\in \{0,1,\cdots,2^{2u+v}-1\}\right\}$ is $\{0,1,\cdots,2^{2u+v}-1\}$.
  So there is a $k_0\in\{0,1,\cdots,2^{2u+v}-1\}$ such that $x-\sum\limits_{i=1}^{2^{u}-1}x_{i,2}^m=x-(2^{u}-1)(\gamma-1)^m\equiv k_0(\gamma-1)^m \pmod {2^{2u+v}}$.
  Let
  \begin{equation*}
   y_i=\begin{cases}
   \gamma-1,& i\leq k_0, \\
   0,& i>k_0,
   \end{cases}\ \  1\leq i\leq 2^{2u+v}-1.
   \end{equation*}
  It follows that $y_i\in\{0,\gamma-1\}$ and $x-\sum\limits_{i=1}^{2^{2u+v}-1}y_i^m=x-k_0(\gamma-1)^m\equiv \sum\limits_{i=1}^{2^{u}-1}x_{i,2}^m\pmod {2^{2u+v}}$.
  Hence, (\ref{thm7.2}) is valid for $N=2$.

  Assume (\ref{thm7.2}) is valid for some $N\geq 2$.
  We next prove (\ref{thm7.2}) for $N+1$.
  If $z\in \{x_{i,N}:1\leq i\leq 2^u-1\}$, then we have $z\equiv \gamma-1\pmod {2^u}$.
  So $2\nmid z$.
  Note that
  \begin{equation*}
    (z+(\gamma-1)\gamma^N)^m=z^m+mz^{m-1}(\gamma-1)\gamma^N+\sum_{j=2}^{m}\binom{m}{j}z^{m-j}(\gamma-1)^j \gamma^{jN}.
  \end{equation*}
  For each $j\in\{2,3,\cdots,m\}$, we have $2j-3\geq v_2(j)$.
  It follows that
  \begin{equation*}
    Nuj-v_2(j)-u(N+1)= u\left((j-1)N-1\right)-v_2(j)\geq 2j-3-v_2(j)\geq 0.
  \end{equation*}
  Then for each $j\in\{2,3,\cdots,m\}$, we obtain
  \begin{align*}
     v_2\left(\binom{m}{j}z^{m-j}(\gamma-1)^j \gamma^{jN}\right)&= ujN+v_2(m)-v_2(j)+v_2\left(\binom{m-1}{j-1}\right)\\
     &\geq ujN+v-v_2(j)\geq u(N+1)+v.
  \end{align*}
  Note that $z^{m-1}\equiv (\gamma-1)^{m-1}\pmod {2^u}$.
  Then
   \begin{equation*}
     mz^{m-1}(\gamma-1)\gamma^N=m_1 z^{m-1}(\gamma-1)\gamma_1^N 2^{v+uN}\equiv m_1 (\gamma-1)^{m}\gamma_1^N 2^{v+uN}\pmod {2^{v+u(N+1)}}.
   \end{equation*}
   So we have $\left(z+(\gamma-1)\gamma^N\right)^m\equiv z^m+m_1(\gamma-1)^{m}\gamma_1^N 2^{v+uN}\pmod {2^{v+u(N+1)}}$.

   Using the fact that $2\nmid m_1(\gamma-1)^{m}\gamma_1^N$, we derive that the residue class of module $2^{u(N+1)+v}$ of the set $\left\{k m_1(\gamma-1)^{m}\gamma_1^N 2^{uN+v}: k\in\{0,1,\cdots,2^u-1\}\right\}$
   is the same as $\left\{k\cdot 2^{uN+v}:k\in\{0,1,\cdots,2^u-1\}\right\}$.
   Then there is a $k_1\in\{0,1,\cdots,2^u-1\}$ such that
   \begin{equation*}
     x-\sum\limits_{i=1}^{2^{2u+v}-1}y_i^m\equiv k_1 m_1(\gamma-1)^{m}\gamma_1^N 2^{uN+v}+\sum\limits_{i=1}^{2^{u}-1}x_{i,N}^m\pmod {2^{u(N+1)+v}}.
   \end{equation*}
   Let
   \begin{equation*}
   x_{i,N+1}=\begin{cases}
   x_{i,N},& i>k_1, \\
   x_{i,N}+(\gamma-1)\gamma^N,& i\leq k_1,
   \end{cases}\ \  1\leq i\leq 2^u-1.
   \end{equation*}
   Then we have $x_{i,N+1}\equiv x_{i,N} \equiv \gamma-1 \pmod {2^u}$ and
   \begin{equation*}
    \sum_{i=1}^{2^{u}-1}x_{i,N+1}^m\equiv k_1 m_1(\gamma-1)^{m}\gamma_1^N 2^{uN+v}+\sum_{i=1}^{2^{u}-1}x_{i,N}^m\equiv x-\sum_{i=1}^{2^{2u+v}-1}y_i^m\pmod {2^{u(N+1)+v}}.
  \end{equation*}
  This completes the proof of the claim.

  From the construction of $x_{i,N+1}$, we have $x_{i,M}\equiv x_{i,N}\pmod{2^{uN}}$ for each integer $M\geq N\geq 2$ and each $i\in\{0,1,\cdots 2^u-1\}$.
  Then we obtain $|x_{i,M}- x_{i,N}|_2\leq 2^{-uN}$.
  So $\{x_{i,N}\}_{N=2}^\infty$ is a Cauchy sequence in $\mathcal{C}_\gamma$ for each $i\in\{0,1,\cdots,2^u-1\}$.
  We can see that $\lim\limits_{N\rightarrow\infty}x_{i,N}=x_i\in\mathcal{C}_\gamma$ since $\mathcal{C}_\gamma$ is compact.
  Consequently, $\lim\limits_{N\rightarrow\infty} \sum\limits_{i=1}^{2^u-1}x_{i,N}^m=\sum\limits_{i=1}^{2^u-1}x_i^m$.
  Note that $\left|x-\sum\limits_{i=1}^{2^{2u+v}-1}y_i^m-\sum\limits_{i=1}^{2^{u}-1}x_{i,N}^m\right|_2\leq 2^{-(uN+v)}$ for any $N\geq 2$.
  We derive $\lim\limits_{N\rightarrow\infty}\left(x-\sum\limits_{i=1}^{2^{2u+v}-1}y_i^m- \sum\limits_{i=1}^{2^{u}-1}x_{i,N}^m\right)=0$.
  Hence, $x=\sum\limits_{i=1}^{2^{2u+v}-1}y_i^m+\sum\limits_{i=1}^{2^u-1}x_i^m$.
  It is not difficult to see that $2^{u}-1+2^{2u+v}-1=\frac{1}{|\gamma|_2}\left(\frac{1}{|\gamma|_2 |m|_2}+1\right)-2$.
  The proof is complete.
\end{proof}

We now consider the case $p\geq 3$.

\begin{theorem}
  Suppose that $\gamma\in p\mathbb{Z}_p$ and $2|\gamma|_p<1$, where $p\ge 3$ is a prime.
  For any integer $m\geq 2$, there is a positive integer $k\leq \frac{1}{|\gamma|_p}\left(\frac{1}{|m|_p}+1\right)-2$, such that
  \begin{equation*}
    \mathbb{Z}_p=\{x_1^m+x_2^m+\cdots+x_k^m:x_i\in \mathcal{C}_\gamma, 1\leq i\leq k\}.
  \end{equation*}
  In other words,  $\mathcal{G}_\gamma(m)\leq\frac{1}{|\gamma|_p}\left(\frac{1}{|m|_p}+1\right)-2$.
\end{theorem}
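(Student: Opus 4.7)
The proof will closely parallel that of Theorem 7.2, so the plan is to mimic its induction but exploit the fact that $p \geq 3$ makes the binomial error terms behave better, yielding a smaller count. Write $\gamma = p^u \gamma_1$ with $p \nmid \gamma_1$ (hence $u \geq 1$) and $m = p^v m_1$ with $p \nmid m_1$. The target count decomposes as $p^u(p^v+1) - 2 = (p^{u+v}-1) + (p^u - 1)$, suggesting $p^{u+v}-1$ ``low-order'' summands $y_i \in \{0, \gamma-1\}$ together with $p^u - 1$ ``high-order'' summands $x_{i,N}$ built up inductively with $x_{i,N} \equiv \gamma - 1 \pmod{p^u}$. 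The inductive claim to carry is that for every $x \in \mathbb{Z}_p$ and every $N \geq 1$ one can find such $y_i$ and $x_{i,N}$ with coefficients in $\{0, \gamma-1\}$ up to index $N-1$ satisfying
\begin{equation*}
x - \sum_{i=1}^{p^{u+v}-1} y_i^m \equiv \sum_{i=1}^{p^u-1} x_{i,N}^m \pmod{p^{uN+v}}.
\end{equation*}

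For the base case $N = 1$, set $x_{i,1} = \gamma - 1$. Since $p \nmid \gamma - 1$, the element $(\gamma-1)^m$ is a unit in $\mathbb{Z}_p$, so $\{k(\gamma-1)^m : 0 \leq k \leq p^{u+v}-1\}$ exhausts a full set of residues modulo $p^{u+v}$; pick the unique $k_0$ matching $x - (p^u-1)(\gamma-1)^m$ and set $y_i = \gamma - 1$ for $i \leq k_0$ and $y_i = 0$ otherwise. For the inductive step, I will pass from $N$ to $N+1$ by replacing $k_1$ of the $x_{i,N}$ with $x_{i,N} + (\gamma - 1)\gamma^N$, where $k_1 \in \{0, 1, \ldots, p^u - 1\}$ is chosen to make the congruence hold modulo $p^{u(N+1)+v}$. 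Since $x_{i,N} \equiv \gamma - 1 \pmod{p^u}$, a direct calculation gives
\begin{equation*}
  \left(z + (\gamma-1)\gamma^N\right)^m \equiv z^m + m(\gamma-1)^m \gamma^N \pmod{p^{u(N+1)+v}},
\end{equation*}
provided the binomial error terms $\binom{m}{j} z^{m-j}(\gamma-1)^j \gamma^{jN}$ for $j \geq 2$ all have $v_p \geq u(N+1) + v$; the leading contribution $m_1(\gamma - 1)^m \gamma_1^N \cdot p^{uN+v}$ is a unit times $p^{uN+v}$, so its multiples by $k_1 \in \{0, \ldots, p^u-1\}$ sweep out every residue class modulo $p^u$ at that precision level, which is exactly the freedom needed.

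The technical heart is the binomial estimate; this is where the hypothesis $p \geq 3$ enters essentially. Using $v_p\bigl(\binom{m}{j}\bigr) \geq v - v_p(j)$, one reduces the required bound to $u\bigl((j-1)N - 1\bigr) \geq v_p(j)$ for all $j \geq 2$ and $N \geq 1$. For $j = 2$ this is $0 \geq v_p(2) = 0$, which holds precisely because $p \neq 2$ (this is the key point where the $p = 2$ argument needed to start at $N = 2$ and use an extra factor of $p^u$ in the low-order block). For $j = 3$ it reduces to $u \geq v_p(3) \leq 1$, true since $u \geq 1$. For $j \geq 4$, $u(j-2) \geq j - 2 \geq 2 \geq v_p(j)$ because $v_p(j) \leq \log_p j \leq j - 2$. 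Hence the induction already starts at $N = 1$, justifying the smaller count $p^{u+v} - 1$ of low-order terms.

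Finally, the sequences $\{x_{i,N}\}_{N\geq 1}$ are Cauchy in the compact set $\mathcal{C}_\gamma$ by construction (each step only alters the coefficient of $\gamma^N$), so they converge to elements $x_i \in \mathcal{C}_\gamma$; passing to the limit in the congruence yields $x = \sum_{i=1}^{p^{u+v}-1} y_i^m + \sum_{i=1}^{p^u-1} x_i^m$ as an honest $p$-adic identity. This uses a total of $(p^{u+v}-1) + (p^u-1) = \tfrac{1}{|\gamma|_p}\bigl(\tfrac{1}{|m|_p} + 1\bigr) - 2$ summands, giving the claimed bound on $\mathcal{G}_\gamma(m)$. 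The main obstacle is the binomial valuation estimate, but as sketched above it is genuinely simpler for $p \geq 3$ than the $p = 2$ analogue, so the argument is actually cleaner than Theorem 7.2.
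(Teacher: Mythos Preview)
Your proposal is correct and follows essentially the same approach as the paper: the same decomposition of the count as $(p^{u+v}-1)+(p^u-1)$, the same inductive claim starting at $N=1$, the same Hensel-type step replacing $x_{i,N}$ by $x_{i,N}+(\gamma-1)\gamma^N$, and the same binomial-valuation estimate reducing to $j-2\ge v_p(j)$ for $j\ge 2$ (which is exactly where $p\ge 3$ enters). One small presentational slip: in your case $j\ge 4$ the chain ``$j-2\ge 2\ge v_p(j)$'' is not literally true (e.g.\ $j=27$, $p=3$), but the justification you give immediately after, $v_p(j)\le \log_p j\le j-2$, is the correct one and is what the paper uses as well.
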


\begin{proof}
  Assume $\gamma=p^u \gamma_1\in p\mathbb{Z}_p$ and $p\nmid \gamma_1$.
  We should have $u\geq 1$.
  Assume $m=p^v m_1$ and $p\nmid m_1$.
  We claim that for any $x\in\mathbb{Z}_p$ and any $N\geq 1$, there are $y_i\in \{0,\gamma-1\}$ for $1\leq i\leq p^{u+v}-1$ and $x_{i,N}\in \left\{\sum\limits_{n=0}^{N-1}a_n \gamma^n:a_n\in\{0,\gamma-1\}\right\}$ for $1\leq i\leq p^{u}-1$ with $x_{i,N}\equiv \gamma-1\pmod {p^u}$, such that
  \begin{equation}\label{thm7.3}
    x-\sum_{i=1}^{p^{u+v}-1}y_i^m\equiv \sum_{i=1}^{p^{u}-1}x_{i,N}^m\pmod {p^{uN+v}}.
  \end{equation}
  We now prove it by induction on $N$.

  Suppose $N=1$.
  Let $x_{i,1}=\gamma-1$ for $1\leq i\leq p^{u}-1$.
  Since $p|\gamma$, we have $p\nmid(\gamma-1)$.
  Then the residue class of module $p^{u+v}$ of the set $\left\{k (\gamma-1)^m: k\in \{0,1,\cdots,p^{u+v}-1\}\right\}$ is $\{0,1,\cdots,p^{u+v}-1\}$.
  So there is a $k_0\in\{0,1,\cdots,p^{u+v}-1\}$ such that $x-\sum\limits_{i=1}^{p^{u}-1}x_{i,1}^m=x-(p^{u}-1)(\gamma-1)^m\equiv k_0(\gamma-1)^m \pmod {p^{u+v}}$.
  Let
  \begin{equation*}
   y_i=\begin{cases}
   \gamma-1,& i\leq k_0, \\
   0,& i>k_0,
   \end{cases}\ \ 1\leq i\leq p^{u+v}-1.
   \end{equation*}
  Then $y_i\in\{0,\gamma-1\}$ and $x-\sum\limits_{i=1}^{p^{u+v}-1}y_i^m=x-k_0(\gamma-1)^m\equiv\sum\limits_{i=1}^{p^{u}-1}x_{i,1}^m\pmod {p^{u+v}}$.
  Hence, (\ref{thm7.3}) is valid for $N=1$.

  Assume (\ref{thm7.3}) is valid for some $N\geq 1$.
  Next we prove (\ref{thm7.3}) for $N+1$.
  If $z\in \{x_{i,N}:1\leq i\leq p^u-1\}$, then we have $z\equiv \gamma-1 \pmod {p^u}$.
  So $p\nmid z$.
  Note that
  \begin{equation*}
    \left(z+(\gamma-1)\gamma^N\right)^m=z^m+mz^{m-1}(\gamma-1)\gamma^N+\sum_{j=2}^{m}\binom{m}{j}z^{m-j}(\gamma-1)^j \gamma^{jN}.
  \end{equation*}
  For $j\in\{2,3,\cdots,m\}$, since $p\geq 3$, we have
  \begin{equation*}
  \begin{cases}
   j\geq 2=v_p(j)+2, & \text{if}\ v_p(j)=0. \\
   j\geq p^{v_p(j)}\geq v_p(j)+2, & \text{if}\ v_p(j)\ge 1.
   \end{cases}
   \end{equation*}
   So $\left((j-1)N-1\right)u+v_p\left(\binom{m}{j}\right)\ge j-2+v_p(m)-v_p(j)+v_p\left(\binom{m-1}{j-1}\right)\geq v$.
   Then for $j\in\{2,3,\cdots,m\}$, we obtain $v_p\left(\binom{m}{j}z^{m-j}(\gamma-1)^j\gamma^{jN}\right)= v_p\left(\binom{m}{j}\right)+juN\geq v+u(N+1)$.
   Since $z^{m-1}\equiv (\gamma-1)^{m-1}\pmod {p^u}$, we can see that
   \begin{equation*}
     mz^{m-1}(\gamma-1)\gamma^N=m_1 z^{m-1}(\gamma-1)\gamma_1^N p^{v+uN}\equiv m_1 (\gamma-1)^{m}\gamma_1^N p^{v+uN}\pmod {p^{v+u(N+1)}}.
   \end{equation*}
   So
   \begin{equation*}
     \left(z+(\gamma-1)\gamma^N\right)^m\equiv z^m+m_1(\gamma-1)^{m}\gamma_1^N p^{v+uN}\pmod {p^{v+u(N+1)}}.
   \end{equation*}
  Using the fact that $p\nmid m_1(\gamma-1)^{m}\gamma_1^N$, we derive that the residue class of module $p^{u(N+1)+v}$ of the set $\left\{k m_1(\gamma-1)^{m}\gamma_1^N p^{uN+v}: k\in\{0,1,\cdots,p^u-1\}\right\}$
  is $\left\{k p^{uN+v}:k\in\{0,1,\cdots,p^u-1\}\right\}$.
  Then there is a $k_1\in\{0,1,\cdots,p^u-1\}$ such that $x-\sum\limits_{i=1}^{p^{u+v}-1}y_i^m\equiv k_1 m_1(\gamma-1)^{m}\gamma_1^N p^{uN+v}+\sum\limits_{i=1}^{p^{u}-1}x_{i,N}^m\pmod {p^{u(N+1)+v}}$.
  Let
  \begin{equation*}
  x_{i,N+1}=\begin{cases}
   x_{i,N},& i>k_1, \\
   x_{i,N}+(\gamma-1)\gamma^N,&  i\leq k_1.
   \end{cases}
   \end{equation*}
   It follows that $x_{i,N+1}\equiv x_{i,N} \equiv \gamma-1 \pmod {p^u}$ and
   \begin{equation*}
    \sum_{i=1}^{p^{u}-1}x_{i,N+1}^m\equiv k_1 m_1(\gamma-1)^{m}\gamma_1^N p^{uN+v}+\sum_{i=1}^{p^{u}-1}x_{i,N}^m\equiv x-\sum_{i=1}^{p^{u+v}-1}y_i^m\pmod {p^{u(N+1)+v}}.
  \end{equation*}
  This completes the proof of the claim.

  Proceeding as in the proof of the last theorem, we can see that  $x=\sum\limits_{i=1}^{p^{u+v}-1}y_i^m+\sum\limits_{i=1}^{p^u-1}x_i^m$,
  where $x_i=\lim\limits_{N\rightarrow\infty}x_{i,N}\in \mathcal{C}_\gamma$.
  Moreover, $p^{u+v}-1+p^u-1=\frac{1}{|\gamma|_p}\left(\frac{1}{|m|_p}+1\right)-2$.
  The proof is completed.
\end{proof}
Applying the above theorem with $p=\gamma=3$, we can obtain the following result.
\begin{corollary}
  The smallest positive integer $k$ that satisfies $\mathbb{Z}_3=\{x_1^2+x_2^2+\cdots+x_k^2:x_1,x_2,\cdots,x_k\in \mathcal{C}_3\}$ is $4$.
  In other words, we have  $\mathcal{G}_3(2)=4$.
\end{corollary}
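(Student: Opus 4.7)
The plan is to prove the corollary by combining a direct application of the preceding theorem with a short congruence obstruction modulo $9$. The upper bound $\mathcal{G}_3(2)\le 4$ is essentially free: specializing Theorem 7.3 to $p=\gamma=3$ and $m=2$ gives $|\gamma|_3=1/3$ and $|m|_3=1$, so the bound $\frac{1}{|\gamma|_p}\bigl(\frac{1}{|m|_p}+1\bigr)-2$ equals $3\cdot 2-2=4$, which shows $\mathbb{Z}_3=\{x_1^2+x_2^2+x_3^2+x_4^2:x_i\in\mathcal{C}_3\}$. I would state this as a one-line invocation.

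For the matching lower bound, the strategy is to reduce modulo $9$ and exhibit a residue class in $\mathbb{Z}_3$ that cannot be hit by three squares of elements of $\mathcal{C}_3$. Writing $x=\sum_{n\ge 0}a_n 3^n$ with $a_n\in\{0,2\}$, the reduction $x\bmod 9$ depends only on $(a_0,a_1)$, so $\mathcal{C}_3\bmod 9\subseteq\{0,2,6,8\}$. Squaring these four residues gives $\{0,4,0,1\}$, hence $\{x^2:x\in\mathcal{C}_3\}\bmod 9\subseteq\{0,1,4\}$. A short enumeration of unordered triples $(e_1,e_2,e_3)\in\{0,1,4\}^3$ yields all sums in $\{0,1,2,3,4,5,6,8\}\pmod 9$, and in particular $7\not\equiv e_1+e_2+e_3\pmod 9$ for any such triple.

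Consequently $7\in\mathbb{Z}_3$ is not of the form $x_1^2+x_2^2+x_3^2$ with $x_i\in\mathcal{C}_3$, so $\mathcal{G}_3(2)\ge 4$. To conclude that $k=1$ and $k=2$ also fail, I would remark that $0\in\mathcal{C}_3$ implies $f_{k-1,2}(\mathcal{C}_3^{k-1})\subseteq f_{k,2}(\mathcal{C}_3^k)$ (pad with a zero), so the failure at $k=3$ propagates downward. Combining the two bounds yields $\mathcal{G}_3(2)=4$.

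No step here looks delicate: the upper bound is a direct citation, and the lower bound is a finite mod-$9$ check. The only place to be careful is the enumeration of three-element sums from $\{0,1,4\}$; I would simply list all ten unordered possibilities to make the omission of $7\pmod 9$ visibly unambiguous.
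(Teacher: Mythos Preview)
Your proposal is correct and follows essentially the same approach as the paper: invoke Theorem~7.3 with $p=\gamma=3$, $m=2$ for the upper bound, and use the mod-$9$ obstruction $\mathcal{C}_3\bmod 9\subseteq\{0,2,6,8\}$, hence squares lie in $\{0,1,4\}\bmod 9$, to show $7$ is not a sum of three such squares. Your explicit remark that $0\in\mathcal{C}_3$ forces the failure at $k=3$ to propagate to $k=1,2$ is a small point the paper leaves implicit, but otherwise the arguments are identical.
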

\begin{proof}
  Let $\gamma=p=3$ and $m=2$.
  We have $\frac{1}{|\gamma|_p}\left(\frac{1}{|m|_p}+1\right)-2=4$.
  It follows from Theorem 7.3 that $\mathbb{Z}_3=\{x_1^2+x_2^2+x_3^2+x_4^2:x_1,x_2,x_3,x_4\in \mathcal{C}_3\}$.

  We now show that $\mathbb{Z}_3\neq\{x_1^2+x_2^2+x_3^2:x_1,x_2,x_3\in \mathcal{C}_3\}$.
  Note that $\mathcal{C}_3\subseteq\{0,2,6,8\}+3^2\mathbb{Z}_3$.
  We have $\{x^2:x\in \mathcal{C}_3\}\subseteq\{0,1,4\}+3^2\mathbb{Z}_3$.
  It follows that $\{x_1^2+x_2^2:x_1,x_2\in \mathcal{C}_3\}\subseteq\{0,1,2,4,5,8\}+3^2\mathbb{Z}_3$
  and
  \begin{equation*}
  \{x_1^2+x_2^2+x_3^2:x_1,x_2,x_3\in \mathcal{C}_3\}\subseteq\{0,1,2,3,4,5,6,8\}+3^2\mathbb{Z}_3.
  \end{equation*}
   Consequently, $7\notin\{x_1^2+x_2^2+x_3^2:x_1,x_2,x_3\in \mathcal{C}_3\}$ and $\mathbb{Z}_3\neq\{x_1^2+x_2^2+x_3^2:x_1,x_2,x_3\in \mathcal{C}_3\}$.
\end{proof}

We end this section with the following problem.

\begin{problem}
For any prime $p$, $\gamma\in p\mathbb{Z}_p$ with $2|\gamma|_p<1$ and integer $m\ge 2$, how to find the smallest positive integer $k$ that satisfies
\begin{equation*}
  \mathbb{Z}_p=\{x_1^m+x_2^m+\cdots+x_k^m:x_1,x_2,\cdots,x_k\in \mathcal{C}_\gamma\}?
\end{equation*}
\end{problem}


\end{document}